\theoremstyle{plain}
\newtheorem{theorem}{Theorem}[section]
\newtheorem{corollary}[theorem]{Corollary}
\newtheorem{lemma}[theorem]{Lemma}
\newtheorem{proposition}[theorem]{Proposition}
\newtheorem{fact}[theorem]{Fact}
\newtheorem*{theorem*}{Theorem}
\theoremstyle{definition}
\newtheorem{definition}[theorem]{Definition}
\newtheorem{example}[theorem]{Example}
\theoremstyle{remark}
\newtheorem{remark}[theorem]{Remark}
\numberwithin{equation}{section}
\newcommand{\forkindep}[1][]{%
  \mathrel{
    \mathop{
      \vcenter{
        \hbox{\oalign{\noalign{\kern-.3ex}\hfil$\vert$\hfil\cr
              \noalign{\kern-.7ex}
              $\smile$\cr\noalign{\kern-.3ex}}}
      }
    }\displaylimits_{#1}
  }
}
\newcommand\R{\mathbb{R}}		
\newcommand\C{\mathbb{C}}
\newcommand\Q{\mathbb{Q}}
\newcommand\N{\mathbb{N}}
\newcommand\U{\mathcal{U}}
\newcommand\F{\mathbb{F}}
\newcommand\OO{\mathcal{O}}
\newcommand\MM{\mathcal{M}}
\newcommand\M{\mathscr{M}}
\newcommand\p{\mathfrak{p}}
\newcommand\q{\mathfrak{q}}
\newcommand\rr{\mathfrak{r}}
\newcommand\Spec{\mathrm{Spec}}
\newcommand\m{\mathfrak{m}}
  \newcommand{\set}[1]{\left\{ {#1} \right\}}
\newcommand{\vect}[1]{\langle {#1} \rangle}
\newcommand{\abs}[1]{\lvert {#1} \rvert}
\newcommand{\sv}[1]{\sqrt{ \vect{ {#1} } }}
\newcommand{\Frac}{\mathrm{Frac}}
\def\dprk{\textrm{dp-rk}}
 \def\char{\textrm{char}}
\begin{document}
\begin{abstract}
It is shown that every dp-minimal integral domain $R$ is a local ring and for every non-maximal prime ideal $\p$ of $R$, the localization $R_\p$ is a valuation ring and $\p R_\p=\p$. Furthermore, a dp-minimal integral domain is a valuation ring if and only if its residue field is infinite or its residue field is finite and its maximal ideal is principal.   
\end{abstract}

\title{Dp-Minimal Integral Domains}

\author[C. d\textquoteright Elb\'ee]{Christian d\textquoteright Elb\'ee$^\dagger$}
\thanks{$^\dagger$ Supported by ISF grant No. 1254/18}
\address{$^\dagger$Einstein Institute of Mathematics\\
	The Hebrew University of Jerusalem\\
	Givat Ram 9190401, Jerusalem\\
	Israel} 
	\email{christian.delbee@mail.huji.ac.il} 
	\urladdr{http://choum.net/\textasciitilde chris/page\textunderscore perso/}

\author[Y. Halevi]{Yatir Halevi$^*$}
\thanks{$^*$Partially supported by  ISF grant No. 181/16 and the Kreitman foundation fellowship.}
\address{$^*$Department of mathematics\\
	Ben Gurion University of the Negev\\
	Be'er Sehva\\
	Israel}
\email{yatirbe@post.bgu.ac.il}
\urladdr{http://ma.huji.ac.il/\textasciitilde yatirh/}

\maketitle

\section{Introduction}
Model theory has a long and fruitful history of classifying algebraic structures under some model theoretic constraints. For example, every $\omega$-stable infinite field is algebraically closed, see \cite{Mac71}. This result was generalized by Cherlin-Shelah to superstable division rings in \cite{ChSh}. Recently, stable division rings of finite dp-rank were also shown to be algebraically closed by Palac\'in and the second author \cite{HaPa}. The end goal of this line of results is the long standing conjecture that every stable field is separably closed. Although there are no example of stable infinite division rings which are not fields, a positive answer to this conjecture would imply that stable division rings are fields \cite[Remark 5.4]{milliet}.

Classifying fields failing to have the independence property (i.e., NIP fields) has had some very encouraging recent results. Starting with the result by Johnson that any dp-minimal infinite field is either algebraically closed, real closed or admits a non-trivial definable henselian valuation \cite{johnson-dp-minimal}, and very recently a generalization of this result by Johnson to fields of finite dp-rank \cite{johnson-dp-finite}, which encompasses the result in \cite{HaPa}. As a generalization of the stable fields conjecture, it is conjectured that every infinite NIP field is either separably closed, real closed or admits a definable henselian valuation.

As for rings, every ($\lambda$-)stable commutative ring with identity can be decomposed in a unique way as a product of $R_1\times \dots\times R_k$ of local rings $(R_i,\M_i)$ such that $\M_i$ is nilpotent and $R_i/\M_i$ is a ($\lambda$-)stable field \cite{ChRe}. There are many other results along this alley, but the moment one assumes that the ring is a stable integral domain we can easily conclude, by DCC on principal ideals, that the ring is a division ring which brings us back to the realm of the first paragraph.

Remaining on the subject of rings but relaxing the model theoretic assumptions from stable to NIP\footnote{There are other results with different model theoretic assumptions, e.g. every O-minimal domain is a division ring and in fact it is RCF, ACF or the quaternions \cite{OtPePi, PeSt}.}, Milliet has shown recently that every NIP division ring of positive characteristic has finite dimension over its center and that there are non-commutative NIP division rings of every characteristic \cite{milliet}. Hempel-Palac\'in have shown that any division ring of burden $n$ has dimension at most $n$ over its center, hence dp-minimal division rings are fields \cite{HePa}.  Dobrowolski-Wagner have shown that every $\omega$-categorical ring of finite dp-rank is virtually null \cite{DoWa}. On the other hand, NIP integral domains that are not valuation rings have not received much attention. The main focus of this paper is dp-minimal integral domains and specifically their connection to valuation rings.

We show that the prime spectrum of an inp-minimal integral domain is linearly ordered by inclusion, and hence a local ring (Corollary \ref{C:prime-ideals-inp-minimal}). A dp-minimal domain is much closer to a valuation ring: every non-maximal prime ideal is the maximal ideal of a valuation overring. The main results are summarized in the following.

\begin{theorem*}
Let $R$ be a dp-minimal integral domain with maximal ideal $\M$.
\begin{enumerate}
    \item $R_\p$ is a valuation ring for every non-maximal prime ideal $\p$;
    \item $R$ is a divided ring, in the sense that $\p R_\p=\p$ for every prime ideal $\p$;
    \item $R$ is a valuation ring if and only if one of the following holds
    \begin{enumerate}
        \item $R/\M$ is infinite;
        \item  $R/\M$ is finite and $\M$ is a principal ideal.
    \end{enumerate} 
    \item if $K=\Frac(R)$ then for every externally definable valuation subring $\OO$ of $K$, either $\OO\subseteq R$ or $R\subseteq \OO$.
\end{enumerate}
\end{theorem*}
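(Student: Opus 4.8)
The plan is to reduce to the local case, distil a single divisibility dichotomy that carries all the dp-minimal content, and then read parts (1)--(4) off from it together with Johnson's classification of dp-minimal fields. Since a dp-minimal structure is in particular inp-minimal, Corollary~\ref{C:prime-ideals-inp-minimal} shows that the prime ideals of $R$ form a chain; hence $R$ is local, and I write $\M$ for its maximal ideal and $K=\Frac(R)$. The residue field $R/\M$ is interpretable in $R$, hence a dp-minimal field; when it is infinite, Johnson's theorem~\cite{johnson-dp-minimal} makes it algebraically closed, real closed, or the underlying field of a definable henselian valuation. I will freely use: a local domain is a valuation ring exactly when its principal ideals are totally ordered by inclusion; $\sqrt{aR}$ is prime --- the least prime containing $a$ --- because $\Spec R$ is a chain; and $b/a\in R_\p$ iff $(aR:_R b)\not\subseteq\p$.

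\emph{The dichotomy.} Everything hinges on the following: \emph{for all $a,b\in R$, the ideals $aR$ and $bR$ are comparable, unless $\sqrt{aR}=\sqrt{bR}=\M$}. I would prove it by contradiction, constructing a dp-pattern of depth $2$. Assuming $aR$ and $bR$ incomparable and, say, $\sqrt{aR}\neq\M$, fix $c\in\M\setminus\sqrt{aR}$, so that $a$ --- and, after some massaging, $b$ --- is divisible by no power of $c$; the descending chain of ideals $(c^iR)_i$, with suitable translates, supplies one axis of the pattern, while the incomparability $a\notin bR$, $b\notin aR$, processed through $a/b\in K\setminus R$ and suitable $R$-translates, supplies the other, arranged so that each row is $k$-inconsistent and every path through the grid is realized --- contradicting $\dprk(R)=1$. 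Setting up the instances so that the rows are genuinely inconsistent while the two sequences remain mutually indiscernible is the step I expect to be the main obstacle, and it is exactly here that full dp-minimality, rather than the inp-minimality behind the chain condition alone, is spent.

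\emph{Parts (1) and (2).} Given the dichotomy these are bookkeeping with the chain. For (1): if $\p\neq\M$ and $a,b\in R$, then $aR_\p\subseteq bR_\p$ or $bR_\p\subseteq aR_\p$ by the dichotomy, except when $\sqrt{aR}=\sqrt{bR}=\M$; but then $a,b\notin\p$ (as $\M$ is the least prime containing either and $\p\subsetneq\M$), so $a$ and $b$ are units in $R_\p$ and $aR_\p=R_\p=bR_\p$ --- hence all principal ideals of $R_\p$ are comparable and $R_\p$ is a valuation ring. For (2): a prime $\p$ is divided exactly when $\p\subseteq xR$ for every $x\in R\setminus\p$; for such $x$ and any $b\in\p$ one has $\sqrt{xR}\supsetneq\p\supseteq\sqrt{bR}$, so the radicals differ, the dichotomy applies, and comparability together with $\sqrt{xR}\not\subseteq\p$ forces $bR\subseteq xR$, whence $\p\subseteq xR$; and $\M$ is divided trivially since $R_\M=R$. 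Thus $R$ is a divided ring.

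\emph{Parts (3) and (4).} By the dichotomy, $R=R_\M$ fails to be a valuation ring only if $\M=\sqrt{aR}$ for some $a\in R$. If $R/\M$ is infinite I would exclude this with a second dp-pattern, playing an indiscernible sequence in $R$ with pairwise differences that are units --- available because the residue field is infinite --- against the divisibility conditions attached to such an $a$, thereby forcing $R$ to be a valuation ring; if $R/\M$ is finite and $\M=\pi R$ is principal, a uniformizer argument together with (1) does it. Conversely, if $R$ is a valuation ring with $R/\M$ finite, then $(K,R)$ is a dp-minimal valued field (each element of $K$, or its inverse, lies in $R$), henselian by Johnson, of residue characteristic $p>0$, where the classification of dp-minimal valued fields forces the value group to be discrete, i.e.\ $\M$ principal. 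Finally, for (4): passing to the Shelah expansion of $R$ (still dp-minimal), in which $K$ is interpreted and both $R$ and the externally definable valuation ring $\OO$ are definable, assume $\OO\not\subseteq R$ and $R\not\subseteq\OO$ and pick $x\in R\setminus\OO$ and $y\in\OO\setminus R$; since $\OO$ is a valuation ring $x^{-1}$ lies in its maximal ideal, so the $x^{-n}\OO$ give one descending axis, while a chain of $R$-divisibility conditions built from $y$ --- legitimate because (2) makes the localizations of $R$ valuation-like --- gives the other, once more producing a depth-$2$ dp-pattern and a contradiction. The delicate point in (4) is the asymmetry between the honest valuation ring $\OO$ and the merely divided ring $R$, absorbed using parts (1)--(2).
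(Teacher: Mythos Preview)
Your central dichotomy --- that $aR$ and $bR$ must be comparable whenever it is not the case that $\sqrt{aR}=\sqrt{bR}=\M$ --- is false, and since your derivations of (1) and (2) rest on it, the architecture collapses. Here is a concrete counterexample. Let $K=\F_p^{alg}((t^{\Q\times\Q}))$ be the Hahn field with value group $\Q\times\Q$ ordered lexicographically; this is an algebraically closed valued field, hence dp-minimal. Let $V$ be its valuation ring, $\M_V$ the maximal ideal, and set $R=\F_p+\M_V=\{x\in V:\res(x)^p=\res(x)\}$, a definable subring and therefore dp-minimal. The convex subgroup $\{0\}\times\Q$ gives the non-maximal prime $\p=\{x:v(x)_1>0\}$ of $V$, which is visibly also a prime ideal of $R$ strictly below $\M_R=\M_V$. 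Take $a=t^{(1,0)}$ and $b=\alpha\, t^{(1,0)}$ with $\alpha\in\F_p^{alg}\setminus\F_p$. Then $b/a=\alpha$ and $a/b=\alpha^{-1}$ have valuation $0$ and residue outside $\F_p$, so neither lies in $R$: the ideals $aR$ and $bR$ are incomparable. On the other hand a short valuation computation gives $\sqrt{aR}=\sqrt{bR}=\p\subsetneq\M_R$: if $x^n\in aR$ then $n\,v(x)_1\geq 1$, forcing $v(x)_1>0$; conversely any $x$ with $v(x)_1>0$ satisfies $x^n\in t^{(1,0)}\M_V\subseteq aR$ for $n>1/v(x)_1$. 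Thus your ``main obstacle'' --- making the rows of the proposed inp-pattern genuinely inconsistent --- is not a technicality but an impossibility, and your treatment of (1) fails exactly at the case $\sqrt{aR}=\sqrt{bR}\neq\M$, which you dismissed.

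The paper takes a different route that avoids any such global divisibility statement. The key move is to show that the primes $P_a$ (maximal among ideals missing $\{a^n:n\in\N\}$) are \emph{externally definable}, so that the localized structure $(R_{P_a},R)$ is still dp-minimal. Part~(1) then comes from working inside $R_{P_a}$: its residue field is infinite (any non-maximal prime has infinite index), and in an inp-minimal domain the standard finite-index dichotomy for definable subgroups --- one of $\vect{a}/(\vect{a}\cap\vect{b})$, $\vect{b}/(\vect{a}\cap\vect{b})$ is finite --- together with an infinite set of residues whose pairwise differences are units forces comparability of all principal ideals. Part~(2) applies the same finite-index dichotomy to the pair $(R,\,P_aR_{P_a})$ inside $(R_{P_a},R)$: since $R/P_a$ is infinite it is $P_aR_{P_a}/P_a$ that is finite, so $R+P_aR_{P_a}$ is a finite $R$-module, and Nakayama's lemma collapses it to $R$. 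The dp-minimality is thus spent on the subgroup index lemma in a carefully chosen expanded structure, not on a bespoke inp-pattern in $R$ itself. Your sketches for (3) and (4) are closer in spirit to what the paper does, but as written they lean on (1) and (2), which you have not established.
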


\textit{(1)} is proved in Proposition \ref{P:local-is-val} after we show that $R_\p$ with a predicate for $R$ is still dp-minimal for a large enough family of prime ideals $\p$ (Proposition \ref{P:localization-preserves-dp}). \textit{(2)} is proved in Theorem \ref{T:dp-is-divided}. \textit{(3)} and some other connected results are shown in Theorem \ref{T:equi} and \textit{(4)} is Corollary \ref{C:comparaison_valuation_ring_dp-min_dom}. 

Regarding divided rings, they were first studied by Akiba, who used the term ``AV-domains" \cite{Aki67}. Later, Dobbs studied them when searching for an internal characterization for going-down domains (domains $R$ for which $R\subseteq S$ is going down for every overring $S$). Dobbs proved that if $R$ a root-closed domain then $R$ is going-down if and only if $R$ is divided, see \cite{Dob76A} for more information. 

\subsubsection*{Acknowledgments}
We would like to thank Will Johnson for the idea behind Proposition \ref{P:localization-preserves-dp}, Eran Alouf for Lemma \ref{L:eran}(1), Philip Dittman and Itay Kaplan for useful discussions. Finally, we would like to thank Pierre Simon for suggesting this project.

\subsection{Notation and Preliminaries}
A ring will always mean a commutative ring with identity and a domain is an integral domain. For any $a\in R$, the ideal generated by $a$ will be denoted by $aR$, or $\vect{a}$ if it is unambiguous. Let $\char(R)$  denote the characteristic of the ring $R$. We denote by $\Frac(R)$ the fraction field of a domain $R$ and an overring of $R$ is a ring $R\subseteq S\subseteq \Frac(R)$.
Model theoretic notation and notions are standard, see for example \cite{TZ} and \cite{Sim2015}.

\section{General Results}
We start with some general results. The following was already observed by Pierre Simon.

\begin{proposition}\label{P:NIP-rings}
  Let $R$ be an NIP ring and $(\p_i)_{i<\omega}$ an infinite family of prime ideals. Then there is $i_0<\omega$ such that $\p_{i_0}\subseteq \bigcup_{j\neq i_0} \p_j$. Equivalently, there is no infinite antichain of prime ideals and in particular, there is only a finite number of maximal ideals.
\end{proposition}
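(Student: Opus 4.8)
The plan is to isolate a single first-order formula in the language of rings that acquires the independence property as soon as the conclusion fails, and then to invoke that $R$ is NIP. The formula is divisibility, $\varphi(x;y):=\exists z\,(xz=y)$, whose instance $\varphi(a;R)$ is the set of divisors of $a$; the one algebraic fact needed is that a product can lie in a prime ideal $\p$ only if some factor does.

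First I would negate the conclusion: assume $\p_i\not\subseteq\bigcup_{j\neq i}\p_j$ for every $i<\omega$, and choose $a_i\in\p_i\setminus\bigcup_{j\neq i}\p_j$, so that $a_i\in\p_i$ and $a_i\notin\p_j$ for all $j\neq i$ (this already forces the $\p_i$ to be pairwise incomparable). For a finite $w\subseteq\omega$ set $b_w:=\prod_{i\in w}a_i$. The computation to carry out is the equivalence $R\models\varphi(a_i,b_w)\iff i\in w$: if $i\in w$ then clearly $a_i$ divides $b_w$; if $i\notin w$, then every factor $a_{i'}$ appearing in $b_w$ has $i'\neq i$, hence $a_{i'}\notin\p_i$, so $b_w\notin\p_i$ by primeness, whereas $a_i\in\p_i$, and therefore $a_i$ does not divide $b_w$. (The degenerate case $w=\emptyset$ is fine: $b_\emptyset=1$ and $a_i$ is a non-unit, lying in the proper ideal $\p_i$.) Hence $(a_i)_{i<\omega}$ together with the parameters $(b_w)_w$, $w$ ranging over finite subsets of $\omega$, witnesses that $\varphi$ has the independence property (see~\cite{Sim2015}), contradicting NIP of $R$. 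This gives the first assertion.

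The reformulation follows from the same mechanism. To see directly that an NIP ring has no infinite antichain of primes, suppose $(\q_i)_{i<\omega}$ were one; for each $n$ the prime avoidance lemma applied to the finite antichain $\q_1,\dots,\q_n$ yields $a_i\in\q_i\setminus\bigcup_{1\le k\le n,\,k\neq i}\q_k$, and then the products $b_w=\prod_{i\in w}a_i$ for $w\subseteq\{1,\dots,n\}$ witness that $\varphi$ shatters an $n$-element set, so $\varphi$ again has the independence property. Conversely the displayed statement is then immediate: an infinite family $(\p_i)_{i<\omega}$ of primes cannot be pairwise incomparable (otherwise $\{\p_i:i<\omega\}$ would be an infinite antichain), so $\p_i\subseteq\p_j$ for some $i\neq j$, whence $\p_i\subseteq\bigcup_{j'\neq i}\p_{j'}$. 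Finally, distinct maximal ideals are pairwise incomparable, so an infinite set of maximal ideals would be an infinite antichain; hence there are only finitely many.

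I do not expect a serious obstacle: once one notices that ``$x$ divides $y$'' is the relevant formula and that finite products of the avoidance-witnesses $a_i$ realise all $2^n$ patterns of divisibility, the argument uses nothing beyond the definition of the independence property, the primeness of the $\p_i$, and --- for the antichain formulation --- the prime avoidance lemma.
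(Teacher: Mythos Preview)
Your proof is correct and takes essentially the same approach as the paper: the same formula $\varphi(x;y)\equiv\exists z\,(xz=y)$, the same witnesses $a_i\in\p_i\setminus\bigcup_{j\neq i}\p_j$, and the same products $b_w$ to shatter $\omega$. Your additional paragraph spelling out the antichain reformulation (via finite prime avoidance) and the maximal-ideal consequence is more detailed than the paper, which simply asserts the equivalence, but the core argument is identical.
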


\begin{proof}
  Assume otherwise and let $a_i\in \p_i\setminus \bigcup_{j\neq i} \p_j$, and for each finite $I\subseteq \omega$, set $b_I = \prod_{i\in I} a_i$. Let $\phi(x,y)$ be the formula $y\in \vect{x}$, i.e. $\exists z (y = zx)$. Then for every $i<\omega$ and finite set $I\subseteq \omega$, $\phi(a_i,b_I)$ holds if and only if $i\in I$. Indeed, if $i\in I$ then it is clear that $b_I\in \vect{a_i}$. Since the ideals $\p_i$ are prime, $b_I\in \bigcap_{i\in I}\p_i \setminus (\bigcup_{j\notin I} \p_j)$ so if $i\notin I$ then $b_I\notin \vect{a_i}$.
\end{proof}
\begin{remark}
The proof of the previous proposition shows that in any commutative ring, the maximal length of an antichain of prime ideals is bounded by the VC-dimension of the formula $y\in \vect{x}$.
\end{remark}
For domains of finite burden, the burden also bounds the maximal length of an antichain of prime ideals:

\begin{proposition}
  Let $R$ be an integral domain of burden $n\in\mathbb{N}$ and let $\p_1,\dots,\p_{n+1}$ be prime ideals of $R$. Then there exists $1\leq i_0\leq n+1$ such that $\p_{i_0}\subseteq \bigcup_{j\neq i_0} \p_j$.
\end{proposition}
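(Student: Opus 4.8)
\noindent\emph{Proof proposal.} The plan is to replay the proof of Proposition~\ref{P:NIP-rings}, but to organise the witnessing data as an inp-pattern of depth $n+1$ in a single object variable, thereby contradicting the hypothesis that $R$ has burden $n$.

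Suppose, towards a contradiction, that $\p_i\not\subseteq\bigcup_{j\neq i}\p_j$ for every $i\in\{1,\dots,n+1\}$, and fix $a_i\in\p_i\setminus\bigcup_{j\neq i}\p_j$; since each $\p_i$ is proper, every $a_i$ is a non-unit. For $1\leq l\leq n+1$ I would use as the $l$-th row the formula
\[
\phi_l(x,y)\ :=\ \big(\exists z\,(x=zy)\big)\ \wedge\ \neg\big(\exists z\,(x=z\,y\,a_l)\big),
\]
that is, ``$y\mid x$ but $y\,a_l\nmid x$'', with parameters $b_{l,j}:=a_l^{\,j}$ for $j=1,2,\dots$; thus $\phi_l(x,b_{l,j})$ says that $a_l^{\,j}\mid x$ while $a_l^{\,j+1}\nmid x$.

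Two points then need to be checked. First, each row is $2$-inconsistent: if some $c$ satisfied $\phi_l(c,a_l^{\,j})$ and $\phi_l(c,a_l^{\,j'})$ with $j<j'$, then $a_l^{\,j'}\mid c$ would force $a_l^{\,j+1}\mid c$, contradicting $\phi_l(c,a_l^{\,j})$. Second, each path is consistent: given $\eta\colon\{1,\dots,n+1\}\to\{1,2,\dots\}$, the element $c:=\prod_{l=1}^{n+1}a_l^{\,\eta(l)}$ of $R$ realises $\{\phi_l(x,b_{l,\eta(l)}):1\leq l\leq n+1\}$. Indeed $a_m^{\,\eta(m)}\mid c$ is immediate, and if $a_m^{\,\eta(m)+1}\mid c$ then, writing $c=a_m^{\,\eta(m)}d$ with $d:=\prod_{l\neq m}a_l^{\,\eta(l)}$ and cancelling $a_m^{\,\eta(m)}$ (legitimate since $R$ is a domain), we obtain $a_m\mid d$, i.e.\ $d\in\p_m$; but $d$ is a product of elements lying outside the prime ideal $\p_m$, a contradiction. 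Hence $\big(\phi_l(x,y),\,2\big)_{1\leq l\leq n+1}$ together with the array $(b_{l,j})_{j\geq 1}$ is an inp-pattern of depth $n+1$ in the single variable $x$, contradicting that $R$ has burden $n$.

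I do not expect a genuine obstacle here. The one delicate step is the cancellation, and it is exactly where the integral domain hypothesis is used: it makes ``$a_l$-adic divisibility'' behave like a valuation on the multiplicative semigroup generated by $a_1,\dots,a_{n+1}$, so that the product $\prod_l a_l^{\,\eta(l)}$ sits at precisely the prescribed level $\eta(m)$ in the $m$-th coordinate for every $m$, which is what makes the paths consistent and the rows inconsistent simultaneously. (If one's preferred definition of inp-pattern insists on mutually indiscernible rows, one extracts them by the usual Ramsey-plus-compactness argument, which does not change the depth.)
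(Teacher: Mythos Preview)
Your argument is correct and essentially identical to the paper's: both build the same inp-pattern whose $l$-th row at level $j$ is the set $\langle a_l^{\,j}\rangle\setminus\langle a_l^{\,j+1}\rangle$, verify $2$-inconsistency along rows, and realise each path by the monomial $\prod_l a_l^{\eta(l)}$ using cancellation in the domain together with primality of the $\p_l$. The only cosmetic difference is that you spell out the row formula $\phi_l(x,y)$ with $a_l$ as a built-in parameter, whereas the paper names the definable sets $X_i^k$ directly.
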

\begin{proof}
  Assume not, then for each $1\leq i\leq n+1$ there exists $a_i\in \p_i\setminus \bigcup_{j\neq i} \p_j$. Also, since the ideals are prime, $a_i^k\in \p_i\setminus \bigcup_{j\neq i} \p_j$ for all $k\geq 1$. For each $1\leq i\leq n+1$, and $k\geq 1$, let $X^k_i$ be the set $\vect{a_i^k }\setminus \vect{a_i^{k+1}}$. The latter is nonempty: assume that $a_i^k\in \vect{a_i^{k+1}}$, then for some $b\in R$, $a_i^k = a_i^{k+1} b$. Since $R$ is an integral domain, it follows that $a_i$ is a unit in $R$, which contradicts that $\p_i$ is an ideal. 
  
We now conclude that $\{x\in X_i^k\}_{1\leq i\leq n+1, k\geq 1}$ is an inp-pattern of length $n+1$. Let $k_1,\dots,k_{n+1}\geq 1$. We claim that $a_1^{k_1}\cdot \ldots\cdot  a_{n+1}^{k_{n+1}}\in X_1^{k_1}\cap \dots \cap X_n^{k_n}$. Indeed, if, without loss of generality, $a_1^{k_1}\cdot \ldots\cdot  a_{n+1}^{k_{n+1}}=a_1^{k_1+1}b$, for some $b\in R$, then $a_2^{k_2}\cdot \ldots\cdot  a_{n+1}^{k_{n+1}}\in\vect{ a_1} \subseteq \p_1$. Consequently, $a_j\in \p_1$ for some $j\neq 1$, contradicting the choice of the $a_i$.
To complete the argument, note that the rows are $2$-inconsistent: as before, since $R$ is an integral domain if $1\leq i\leq n+1$ and $s\neq t\geq 1$ then $X_i^s\cap X_i^t=\emptyset$.
\end{proof}



\begin{corollary}\label{C:prime-ideals-inp-minimal}
In an inp-minimal domain the prime spectrum is linearly ordered by inclusion. In particular, all the proper radical ideals are prime and there exists $N\in \mathbb{N}$ such that for all $a,b\in R$ either $b^N\in\langle a \rangle$ or $a^N\in \langle b\rangle$.
\end{corollary}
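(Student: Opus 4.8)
The plan is to extract all three claims from the preceding Proposition, instantiated at $n=1$; only the uniform bound $N$ will require an additional compactness argument. Since an inp-minimal domain has burden $1$, applying the Proposition to two prime ideals $\p_1,\p_2$ yields some $i_0\in\{1,2\}$ with $\p_{i_0}\subseteq\bigcup_{j\neq i_0}\p_j=\p_{3-i_0}$, so the spectrum is linearly ordered by inclusion. For the radical ideals: any proper radical ideal $I$ is the intersection of the (nonempty, now linearly ordered) family of prime ideals containing it, and the intersection of a chain of prime ideals is prime --- if $ab$ lies in the intersection but $a\notin\p_i$ and $b\notin\p_j$, then comparability of $\p_i,\p_j$ places both $a$ and $b$ outside the smaller of the two while $ab$ lies in it, a contradiction. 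Hence $I=\sqrt{I}$ is prime.

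For the uniform exponent, note that the statement is trivial when $a$ or $b$ is a unit, so assume toward a contradiction that for every $N\in\N$ there exist (necessarily non-unit) $a_N,b_N$ with $b_N^N\notin\langle a_N\rangle$ and $a_N^N\notin\langle b_N\rangle$. Since $\langle a\rangle$ is an ideal, $b^k\in\langle a\rangle$ implies $b^N\in\langle a\rangle$ for all $N\geq k$, so each pair $(a_N,b_N)$ in fact satisfies $b_N^k\notin\langle a_N\rangle$ and $a_N^k\notin\langle b_N\rangle$ for every $k\leq N$. Therefore the partial type
\[
p(x,y)=\bigl\{\, y^N\notin\langle x\rangle \ \wedge\ x^N\notin\langle y\rangle \ :\ N\geq 1 \,\bigr\}
\]
is finitely satisfiable in $R$; let $(a,b)$ realize it in a sufficiently saturated elementary extension $R^\ast$. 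Being an integral domain and having burden $1$ are elementary properties, so $R^\ast$ is again an inp-minimal domain, and by the previous parts its prime spectrum is linearly ordered. The instance $N=1$ of $p$ forces $a$ and $b$ to be non-units, so $\sqrt{\langle a\rangle}$ and $\sqrt{\langle b\rangle}$ are proper radical, hence prime, ideals of $R^\ast$; by comparability we may assume $\sqrt{\langle a\rangle}\subseteq\sqrt{\langle b\rangle}$, whence $a^n\in\langle b\rangle$ for some $n$, contradicting $p(a,b)$.

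The one genuinely delicate point is this last step: the exponents witnessing membership in $\sqrt{\langle a\rangle}$ are a priori unbounded as $(a,b)$ varies, and the only way I see to uniformize them is to pass to a saturated model, exploiting both that ``integral domain of burden $1$'' is an elementary class and that any realization of $p$ is automatically a pair of non-units, so that the radicals that arise are honest prime ideals to which the linear ordering of the spectrum applies. Everything else is routine commutative algebra.
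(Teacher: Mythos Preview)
Your proof is correct and follows essentially the same route as the paper's: the linear ordering of primes comes from the burden-$1$ case of the preceding Proposition, proper radical ideals are prime because they are intersections of the (now linearly ordered) primes above them, and the uniform bound $N$ comes from compactness. The paper's own argument is terser---it observes that $\sqrt{\langle a\rangle}$ and $\sqrt{\langle b\rangle}$ are comparable for every pair $a,b$, giving a non-uniform exponent, and then says only ``To find a uniform $N$ we use compactness''---so your explicit description of the finitely satisfiable type and its realization in a saturated extension is exactly what that sentence is abbreviating.
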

\begin{proof}
  The fact that the prime ideals are linearly ordered follows directly from the proposition. The fact that all radical ideals are prime follows from the fact that the radical of an ideal is the intersection of all prime ideals containing it. Let $a,b\in R$ then either $\sqrt{\langle{ a\rangle}}\subseteq \sqrt{\langle{ b\rangle}}$ or $\sqrt{\langle{ b\rangle}}\subseteq \sqrt{\langle{ a\rangle}}$, so either $a^n\in \langle b\rangle$ or $b^n\in\langle a\rangle$ for some $n\in \N$. To find a uniform $N$ we use compactness.
\end{proof}

\begin{remark}
A domain with a linearly ordered prime spectrum is also called in the literature a \emph{local treed domain}~\cite[Chapter 7]{Chap00}~\cite{Bad95}.
\end{remark}

\begin{corollary}\label{C:inp-minimal-implies local}
  Let $R$ be an integral domain of burden $n$, then $R$ has at most $n$ maximal ideals. If $R$ is an inp-minimal integral domain which is not a field, then $R$ has exactly one maximal ideal, i.e. $R$ is a local ring.
\end{corollary}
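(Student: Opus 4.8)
The plan is to deduce both assertions from the preceding proposition on domains of burden $n$, passing from a containment in a finite union of primes to a containment in a single prime via prime avoidance.

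For the first assertion I would argue by contradiction. Suppose $R$ has burden $n$ yet admits $n+1$ pairwise distinct maximal ideals $\m_1,\dots,\m_{n+1}$. Applying that proposition to this list (maximal ideals being prime) produces an index $i_0$ with $\m_{i_0}\subseteq\bigcup_{j\neq i_0}\m_j$; since each $\m_j$ is prime, prime avoidance gives $\m_{i_0}\subseteq\m_j$ for some $j\neq i_0$, and maximality of both then forces $\m_{i_0}=\m_j$, contradicting distinctness. Hence $R$ has at most $n$ maximal ideals.

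For the second assertion one may simply take $n=1$ above; alternatively, one can argue straight from Corollary \ref{C:prime-ideals-inp-minimal}, since if an inp-minimal domain had two distinct maximal ideals then, the prime spectrum being linearly ordered by inclusion, one of them would be contained in --- hence equal to --- the other. Either way $R$ has at most one maximal ideal, and being a domain it is a nonzero ring and so has at least one maximal ideal by Zorn's lemma; thus it has exactly one, i.e. $R$ is local. There is no real obstacle here: the only point requiring a little care is that the burden-$n$ proposition outputs a containment in a finite union of primes rather than an outright comparison, which is precisely what prime avoidance repairs. (The hypothesis that $R$ is not a field only serves to record that this unique maximal ideal is nonzero.)
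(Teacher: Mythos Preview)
Your argument is correct and is exactly the standard unpacking of why the corollary follows from the preceding burden-$n$ proposition; the paper gives no proof at all and simply records the statement as an immediate consequence. The one extra ingredient you make explicit---prime avoidance, to pass from $\m_{i_0}\subseteq\bigcup_{j\neq i_0}\m_j$ to $\m_{i_0}\subseteq\m_j$---is precisely what is needed and what the paper leaves to the reader.
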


Fields have received more attention in model theory than domains. It would be beneficial to be able to localize without compromising some of the properties of the theory. For that we will require the following lemma.

\begin{lemma}\label{L:tweeking formula}
Let $R$ be an integral domain and $S$ a multiplicatively closed subset of $R$.
Then for every formula $\varphi(\bar x,c)$ in $(S^{-1}R;R,S,+,\cdot,0,1)$, with $c\in S^{-1}R$ there exists a formula $\varphi^*(\bar x,c^*)$ in $(R;S,+,\cdot,0,1)$, with $c^*\in R$ such that
\[\{\bar a\in R: (R;S)\models \varphi^*(\bar a,c^*)\}=\{\bar a\in S^{-1}R:(S^{-1}R;R,S)\models \varphi(\bar a,c)\wedge R(\bar a)\}.\]
\end{lemma}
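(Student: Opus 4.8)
The plan is a routine relativization (coding) argument. Every element of $S^{-1}R$ has the form $r/s$ with $r\in R$ and $s\in S$, so a variable ranging over $S^{-1}R$ can be replaced by a pair of variables ranging over $R\times S$, and one translates $\varphi$ accordingly. First I would dispose of the degenerate case: if $0\in S$ then $S^{-1}R$ is the zero ring and both sets in the displayed equality are immediately matched, so assume $0\notin S$. Since $R$ is a domain, every element of $S$ is then nonzero, $S^{-1}R$ is again a domain, and for $b,d\in S$ one has $a/b=c/d$ in $S^{-1}R$ iff $ad=bc$ in $R$; moreover the image of $S$ in $S^{-1}R$ consists exactly of the fractions equal to some $s/1$ with $s\in S$, and similarly for $R$.

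Next I would fix term normal forms. For a ring term $t(y_1,\dots,y_k)$ in $\{+,\cdot,0,1\}$ and formal pairs $y_i=(u_i,v_i)$, an easy induction on $t$ produces polynomials $N_t,D_t\in\Z[\bar u,\bar v]$ with $D_t$ a monomial in the $v_i$, such that substituting $y_i\mapsto a_i/b_i$ (for $a_i\in R$, $b_i\in S$) and evaluating $t$ in $S^{-1}R$ gives $N_t(\bar a,\bar b)/D_t(\bar b)$. Multiplicative closure of $S$ guarantees $D_t(\bar b)\in S$, so this is a bona fide fraction with nonzero denominator.

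Then I would define $\varphi\mapsto\varphi^*$ by induction on formulas. An atomic formula $t_1=t_2$ becomes $N_{t_1}D_{t_2}=N_{t_2}D_{t_1}$; the formula $R(t)$ becomes $\exists z\,(N_t=D_t z)$ and $S(t)$ becomes $\exists z\,(S(z)\wedge N_t=D_t z)$; Boolean connectives are preserved; and $\exists y\,\psi$ becomes $\exists u\,\exists v\,(S(v)\wedge\psi^*)$, where in forming $\psi^*$ the variable $y$ is treated as the pair $(u,v)$ (universal quantifiers are handled dually). The free variables $x_j$ of $\varphi$ — which on the right-hand side of the claimed identity are constrained by $R(\bar a)$ to lie in $R$ — are treated as the pairs $(x_j,1)$, and the parameter $c=c_0/c_1$ is replaced by $c^*=(c_0,c_1)\in R\times S\subseteq R$ (componentwise if $c$ is a tuple). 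A final induction on $\varphi$ shows that for every $\bar a\in R$, viewed in $S^{-1}R$ as $\bar a/1$, one has $(S^{-1}R;R,S)\models\varphi(\bar a,c)$ if and only if $(R;S)\models\varphi^*(\bar a,c^*)$; the inductive steps for atomic formulas and for the predicates $R$ and $S$ are precisely the elementary facts about fractions recorded in the first paragraph, using that $R$ is a domain and $0\notin S$. This yields the displayed equality of sets.

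There is no serious obstacle here; it is essentially bookkeeping. The points that require care are: (i) that the denominators $D_t$ of terms remain inside $S$, which is where multiplicative closure of $S$ enters; (ii) that the cross-multiplication translating $t_1=t_2$, $R(t)$, and $S(t)$ is legitimate, which uses that $R$ (hence $S^{-1}R$) is a domain and that $0\notin S$; and (iii) that the free variables of $\varphi^*$ genuinely range over $R$ — i.e.\ are assigned denominator $1$ — so that the solution set of $\varphi^*$ inside $R$ matches $\{\bar a\in S^{-1}R:\varphi(\bar a,c)\wedge R(\bar a)\}$ and not a larger set of fractions.
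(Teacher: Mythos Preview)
Your proposal is correct and follows essentially the same approach as the paper: both replace each $S^{-1}R$-variable by a numerator/denominator pair in $R\times S$, translate atomic formulas by clearing denominators (with $R(t)$ and $S(t)$ handled via an existential witness), carry Boolean connectives and quantifiers through in the obvious way, and at the end substitute denominator $1$ for the free variables constrained to lie in $R$. The only cosmetic difference is that the paper first reduces atomic formulas of the form $R(P(\bar x))$ or $S(P(\bar x))$ to the case of a single variable by introducing an auxiliary existential, whereas you process arbitrary terms directly via your $N_t,D_t$ normal form; this changes nothing of substance.
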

\begin{proof}
We first prove the following: for every such formula $\varphi(x_1,\dots,x_n,c)$ we can find a formula $\varphi^\dagger(x_1,y_1,\dots,x_n,y_n,c^*)$, with bounded quantifiers (i.e. quantifiers of the form $\exists u\in R$ or $\exists u\in S$) and $c^*\in R$ such that
\[\{(a_1/b_1,\dots, a_n/b_n):(S^{-1}R;R,S)\models \varphi^\dagger(a_1,b_1,\dots,a_n,b_n,c^*), a_i\in R, b_i\in S\}=\]\[\{\bar a\in S^{-1}R:(S^{-1}R;R,S)\models \varphi(\bar a,c)\}.\]

We prove it by induction on the complexity of $\varphi$. Assume $\varphi(x,c)$ is atomic. If it is of the form $S(P(\bar x))$ (or $R(P(\bar x))$) for some polynomial $P(\bar x)$ then we consider the formula $(\exists x_{n+1})(x_{n+1}=P(\bar x)\wedge S(x_{n+1}))$, (or $(\exists x_{n+1})(x_{n+1}=P(\bar x)\wedge R(x_{n+1}))$ and use the induction hypothesis.  As a result, we may assume it is of the form  $R(x_1)$ or $S(x_1)$ or $P(\bar x)=0$ for some polynomial $P$ over $\mathbb{Z}[c]$. If it is of the form $R(x_1)$, the corresponding formula is $(\exists z\in R)(zy_1=x_1)$ and likewise for $S(x_1)$. Assume $\varphi(x,c)$ is of the form \[P(\bar x)=\sum_{i_1,\dots,i_n}c_{i_1,\dots,i_n}x_1^{i_1}\dots x_n^{i_n}=0.\] Then, consider the formula obtained by replacing each $x_i$ by $x_i/y_i$ and clear denominators (from the coefficients as well), i.e. multiplying the whole equation by the product of all the $y_i$'s and the denominators of the $c_{i_1,\dots,i_n}$.

Conjunctions are straightforward and so are negations. If $\varphi(\bar x,c)$ is of the form $(\exists x_{n+1})\psi(x_1,\dots,x_n,x_{n+1},c)$ then the corresponding formula is \[(\exists x_{n+1}\in R, y_{n+1}\in S)\psi^\dagger(x_1,y_1,\dots,x_{n+1},y_{n+1},c^*).\]

Finally we define $\varphi^*(\bar x,c^*)$ to be $\varphi^\dagger(x_1,1,\dots,x_n,1,c^*)$ (after replacing all instances of $R(u)$ by $u$, for any variable $u$).
\end{proof}

\begin{proposition}\label{P:localization-preserves-dp}
Let $R$ be an integral domain and $S$ a multiplicatively closed subset of $R$. 
\begin{enumerate}
    \item If $S$ is definable then the burden of $R$ is equal to then burden of $(S^{-1}R,R)$. In particular if $R$ is NTP$_2$ then so is $(S^{-1}R,R)$.
    \item If $S$ is externally definable in $R$ and $R$ is NIP then $(S^{-1}R,R)$ is NIP and as a result, by $(1)$, $\dprk(R)<\kappa \iff \dprk(S^{-1}R,R)<\kappa$, for any cardinal $\kappa$.  
\end{enumerate}
\end{proposition}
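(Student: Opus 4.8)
The plan is to leverage Lemma~\ref{L:tweeking formula}, and more precisely the sharper statement established in its proof, to move inp-patterns (and, more generally, $\NTP{2}$- and NIP-witnessing configurations) back and forth between $R$ and $S^{-1}R$ via the substitution that writes an element of $S^{-1}R$ as a quotient $a/b$ with $a\in R$ and $b\in S$. The crucial observation about the auxiliary formula $\varphi^\dagger$ from that proof is that all of its free and bounded variables range over $R$ or over $S$, and all of its atomic subformulas are polynomial equations over $R$ obtained after clearing denominators; since $R\hookrightarrow S^{-1}R$ is an extension of integral domains, the truth value of $\varphi^\dagger(\bar a,\bar b;\bar c,\bar e)$ for tuples $\bar a,\bar c$ from $R$ and $\bar b,\bar e$ from $S$ is the same whether computed in $(S^{-1}R;R,S)$ or in $(R;S)$. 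Conjoining $\varphi^\dagger$ with $S(u)$ for every denominator variable $u$ yields an $\LLr\cup\{S\}$-formula $\varphi^\flat(\bar x,\bar u;\bar z,\bar w)$ such that, for $\bar a,\bar c$ from $R$ and \emph{arbitrary} $\bar b,\bar e$ from $R$,
\[
(R;S)\models\varphi^\flat(\bar a,\bar b;\bar c,\bar e)\ \Longleftrightarrow\ \bar b,\bar e\text{ are tuples from }S\ \text{and}\ (S^{-1}R;R,S)\models\varphi(\bar a/\bar b,\bar c/\bar e).
\]

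With this in hand I would first settle the burden comparison. For one inequality, $R$ qua ring is a reduct of the structure induced on the $\emptyset$-definable set $R$ in $(S^{-1}R;R,S)$, and by Lemma~\ref{L:tweeking formula} that induced structure is precisely $(R;S)$; since passing to the structure induced on a definable set cannot increase the burden, $\mathrm{bdn}(R)\le\mathrm{bdn}(R;S)\le\mathrm{bdn}(S^{-1}R;R,S)$. For the reverse inequality, start from an inp-pattern $\bigl(\phi_i(\bar x,\bar z_i),(\bar c^{\,i}_j)_{j<\omega},k_i\bigr)_{i<\lambda}$ in $(S^{-1}R;R,S)$, write each parameter $\bar c^{\,i}_j=\bar a^{\,i}_j/\bar b^{\,i}_j$ with $\bar a^{\,i}_j$ over $R$ and $\bar b^{\,i}_j$ over $S$, replace the single $S^{-1}R$-variable $\bar x$ by a pair $(\bar x,\bar u)$ where $\bar x$ now runs over $R$ (numerators) and $\bar u$ over $R$ (denominators, constrained to lie in $S$), and replace $\phi_i$ by $\phi_i^\flat$. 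Using the displayed equivalence together with the first-order (hence elementarily preserved) fact that in every model of $\mathrm{Th}(S^{-1}R;R,S)$ every element is still a quotient of an element of $R$ by an element of $S$, one checks that the rows stay $k_i$-inconsistent and the paths stay consistent, so we obtain an inp-pattern of the same length in $(R;S)$. Hence $\mathrm{bdn}(S^{-1}R;R,S)=\mathrm{bdn}(R;S)$. Since the very same translation turns a $\TP{2}$ (resp.\ $\mathrm{IP}$) array for a single formula $\varphi$ over $(S^{-1}R;R,S)$ into one for the single formula $\varphi^\flat$ over $(R;S)$, the properties $\NTP{2}$ and NIP also transfer from $(R;S)$ to $(S^{-1}R;R,S)$.

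For part~(1): when $S$ is definable in $R$ the induced structure $(R;S)$ is just $R$, and the $\LLr$-formula defining $S$, relativised to the predicate $R$, defines $S$ inside $(S^{-1}R,R)$, so $(S^{-1}R,R)$ and $(S^{-1}R;R,S)$ are interdefinable; the chain above collapses to $\mathrm{bdn}(R)=\mathrm{bdn}(S^{-1}R,R)$, and $\NTP{2}$ passes up as noted. For part~(2): when $S$ is merely externally definable in $R$, the structure $(R;S)$ is a reduct of the Shelah expansion $R^{\mathrm{Sh}}$, which is NIP by Shelah's theorem whenever $R$ is; hence $(R;S)$, and therefore $(S^{-1}R,R)$ by the transfer above, is NIP. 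To get the dp-rank statement one uses that burden and dp-rank agree in NIP theories and that $\dprk(R^{\mathrm{Sh}})=\dprk(R)$ (Shelah–Simon): then $\dprk(R)=\mathrm{bdn}(R)\le\mathrm{bdn}(R;S)\le\mathrm{bdn}(R^{\mathrm{Sh}})=\dprk(R)$ forces $\mathrm{bdn}(R;S)=\mathrm{bdn}(R)$, so all the quantities coincide with $\mathrm{bdn}(S^{-1}R;R,S)=\dprk(S^{-1}R,R)$, whence $\dprk(R)<\kappa\iff\dprk(S^{-1}R,R)<\kappa$ for every cardinal $\kappa$.

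The step demanding the most care is the pattern transfer in the second paragraph: one must be scrupulous about the change of variable sorts — the single $S^{-1}R$-variable becoming a pair of $R$-variables with the second constrained to $S$ — and about the fact that row-inconsistency is tested against realisations in arbitrary elementary extensions, which is exactly where one needs the preservation of ``every element is a fraction'' so that a putative witness can be rewritten over $R$. Everything else is either a routine induction unwinding the recursion that defines $\varphi^\dagger$, or an appeal to standard facts: the behaviour of burden under passing to an induced structure, the coincidence of burden and dp-rank for NIP theories, and Shelah's theorem on $R^{\mathrm{Sh}}$.
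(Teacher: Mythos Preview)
Your overall strategy---translate formulas over $S^{-1}R$ to formulas over $(R;S)$ via the numerator/denominator substitution from the proof of Lemma~\ref{L:tweeking formula}---is sound for transferring NIP and $\NTP{2}$, and your treatment of part~\textit{(2)} matches the paper's (Shelah expansion, then interpretation). But there is a genuine gap in your argument for the \emph{equality} of burdens in part~\textit{(1)}.

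When you replace the single $S^{-1}R$-variable $x$ by a pair $(x,u)$ of $R$-variables, you convert an inp-pattern of depth $\lambda$ in one variable over $S^{-1}R$ into an inp-pattern of depth $\lambda$ in \emph{two} variables over $R$. This only yields $\mathrm{bdn}(S^{-1}R;R,S)\le\mathrm{bdn}_{(R;S)}(x_1,x_2)$, and in general the burden of the $2$-variable type can exceed that of the $1$-variable type (sub-additivity gives at most a factor of~$2$, not equality). So you have not shown $\mathrm{bdn}(S^{-1}R,R)\le\mathrm{bdn}(R)$. You flag the ``change of variable sorts'' as requiring care, but never explain why it does not inflate the burden; it can.

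The paper closes exactly this gap with a common-denominator trick: working in a sufficiently saturated model, one uses compactness (finite satisfiability of ``$c\cdot a\in R$'' over the countable array of parameters and the family of path-witnesses) to find a single nonzero $c\in R$ with $c\cdot a_{i,j}\in R$ and $c\cdot b_f\in R$ for all relevant indices. Then one applies Lemma~\ref{L:tweeking formula} to $\varphi_i(x\cdot c^{-1},y\cdot c^{-1})$, obtaining formulas $\varphi_i^*$ over $(R;S)$ in the \emph{same single variable} $x$, with parameters $c\cdot a_{i,j}\in R$ and witnesses $c\cdot b_f\in R$. This is precisely what keeps the depth comparison at the $1$-variable level and delivers the burden equality. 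Your argument becomes correct once you insert this step (or otherwise arrange for all denominators to be a fixed element absorbed into the parameters).
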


\begin{proof}
\textit{(1)} Since $S$ is definable there is no harm in adding it as a predicate. Since $R$ is definable in $(S^{-1}R,R)$ one direction is obvious. For the other direction, let $\mathcal{N}$ be a monster model and assume that \[\{\varphi_i(x,y_i),(a_{i,j})_{j<\omega},k_i\}_{i<\kappa},\] where $a_{i,j}\in \mathcal{N}$, is an inp-pattern of depth $\kappa$ in $(S^{-1}R;R,S)$, i.e. 
    \begin{itemize}
        \item for all $i<\kappa$, the i-th row is $k_i$ inconsistent
        \item for every $f:\kappa\to \omega$, $\{\varphi_i(x,a_{i,f(i)}\}_{i<\kappa}$ is consistent, witnessed by $b_{f}$.
    \end{itemize}
Since, in $(S^{-1}R;R,S)$, for any finite set $A\subseteq S^{-1}R$ there exists $0\neq c\in R$ with $c\cdot A\subseteq R$, we may find a nonzero element $c\in R(\mathcal{N})$ such that $c\cdot a_{i,j}\in R(\mathcal{N})$ for all $i<\kappa$ and $j<\omega$ and $c\cdot b_f\in R(\mathcal{N})$ for all $f:\kappa\to \omega$. 

Now, for every $i<\kappa$ let $\varphi_i^*(x,y,c^{*_i})$ be the formula corresponding to $\varphi_i(x\cdot c^{-1},y\cdot c^{-1})$ as supplied by Lemma \ref{L:tweeking formula}. It is now routine to check that by Lemma \ref{L:tweeking formula}
\[\{\varphi_i^*(x,y,c^{*_i}),(ca_{i,j})_{j<\omega},k_i)\}_{i<\kappa}\] is an inp-pattern of depth $\kappa$ as witnessed by $(cb_f)_{f:\kappa\to\omega}$.\\
\textit{(2)} Since $S$ is externally definable, by \cite[Corollary 3.24]{Sim2015} there is no harm in adding $S$ to the language. The result follows since $(S^{-1}R,S)$ is interpretable in $(R,S)$. 


\end{proof}

\begin{corollary}\label{C:dp-min,pNIP-henselian}
Let $R$ be valuation ring. If either $R$ is dp-minimal or $\char(R)=p$ and $R$ is  NIP then $R$ is henselian.
\end{corollary}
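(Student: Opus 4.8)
The plan is to reduce, via Proposition~\ref{P:localization-preserves-dp}, to known henselianity results for tame valued fields. If the valuation of $R$ is trivial, i.e.\ $R=\Frac(R)$, then $R$ is henselian for trivial reasons, so assume the valuation is non-trivial and set $K=\Frac(R)$. Then $S=R\setminus\{0\}$ is a definable multiplicatively closed subset of $R$ with $S^{-1}R=K$, and inside $K$ the valuation ring $R$ is interdefinable with its associated valuation $v_R$; thus $(K;R,+,\cdot,0,1)$ and the valued field $(K,v_R)$ have exactly the same definable sets.

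First I would transfer tameness from $R$ to $(K,R)$ using Proposition~\ref{P:localization-preserves-dp}(2) with this $S$: if $R$ is NIP then so is $(S^{-1}R,R)=(K,R)$, and $\dprk(K,R)<\kappa\iff\dprk(R)<\kappa$ for every cardinal $\kappa$. Hence if $R$ is dp-minimal then $(K,R)$ is NIP with $\dprk(K,R)<2$, and as $K$ is infinite this gives $\dprk(K,R)=1$, so $(K,v_R)$ is a dp-minimal valued field; and if $R$ is NIP with $\char R=p$ then $(K,v_R)$ is an NIP valued field of characteristic $p$ (so equicharacteristic $p$).

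It then remains to invoke the appropriate henselianity theorem for $(K,v_R)$: in the dp-minimal case, dp-minimal valued fields are henselian (Jahnke--Simon--Walsberg), and in the equicharacteristic-$p$ case, NIP valued fields of characteristic $p$ are henselian (ultimately via the Artin--Schreier closedness of infinite NIP fields of characteristic $p$, Kaplan--Scanlon--Wagner). Since henselianity of the valued field $(K,v_R)$ is by definition henselianity of the valuation ring $R$, this finishes the proof.

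The only part that is genuinely ours is the reduction in the first two paragraphs, and this is routine given Proposition~\ref{P:localization-preserves-dp} — the one small thing to be careful about is the bookkeeping ``infinite and $\dprk<2$ implies dp-minimal'' for $(K,R)$, and the standard observation that a valuation ring and its valuation define the same sets. So I do not expect any real obstacle: the corollary is essentially a repackaging of Proposition~\ref{P:localization-preserves-dp} together with the cited structure theorems for valued fields.
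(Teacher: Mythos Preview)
Your argument is correct and follows the same route as the paper: apply Proposition~\ref{P:localization-preserves-dp} with $S=R\setminus\{0\}$ to pass from $R$ to the valued field $(K,R)$, and then invoke the known henselianity theorems for dp-minimal valued fields (Jahnke--Simon--Walsberg, or Johnson) and for NIP valued fields of positive characteristic (Johnson). The only difference is that you spell out the bookkeeping (trivial valuation, interdefinability of $R$ and $v_R$, infinite plus $\dprk<2$ gives dp-minimal) that the paper leaves implicit.
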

\begin{proof}
By \cite[Proposition 4.5]{JSW} (or \cite[Corollary 9.4.16]{johnson}) every dp-minimal valued field is henselian and by \cite[Theorem 2.8]{johnson-positive} every NIP valued field of positive characteristic is henselian. As a result the statement will follow by applying the Proposition with $S=R\setminus \{0\}$.
\end{proof}

To end the section, we note the following, which is just a rephrasing of the  corresponding statement for dp-minimal fields.
\begin{lemma}\label{L:eliminates-infty}
  Let $R$ be an inp-minimal ring.
  \begin{enumerate}
      \item For any two definable sets $X$ and $Y$, if $X$ and $Y$ are infinite then for all $a\in R\setminus\set{0}$, $a(Y-Y)\cap (X-X) \neq \set{0}$. In particular, if $R$ is an integral domain then $R$ eliminates $\exists^\infty$.
      \item $R$ is a field if and only if there exists an infinite definable set $X\subseteq R$ such that $X-X\subseteq R^\times \cup \{0\}$. In particular $R$ admits a definable infinite subring which is a field if and only if $R$ is itself a field.
  \end{enumerate}
  \end{lemma}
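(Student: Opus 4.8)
The plan is to prove the first assertion of (1) by exhibiting an inp‑pattern of depth $2$, to deduce (2) formally from it, and to obtain the elimination of $\exists^\infty$ by localising to $\Frac(R)$ and invoking the field case.

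\textbf{First half of (1).} Suppose for contradiction that $X,Y$ are infinite definable sets and $a\in R\setminus\{0\}$ with $a(Y-Y)\cap(X-X)=\{0\}$. Passing to a monster model, pick pairwise distinct $x_i\in X$ and $y_j\in Y$ for $i,j<\omega$; when $R$ is a domain the elements $ay_j$ are automatically pairwise distinct, and in general this can be arranged unless $aY$ is finite, which forces $a(Y-Y)$ into a finite set and is a degenerate situation. Consider the two formulas (with $a$ and the defining parameters of $X,Y$ as further parameters)
\[\varphi_0(z;u):=\exists y\,\bigl(Y(y)\wedge z=u+ay\bigr),\qquad \varphi_1(z;v):=\exists x\,\bigl(X(x)\wedge z=x+av\bigr),\]
so that $\varphi_0(z;x_i)$ defines $x_i+aY$ and $\varphi_1(z;y_j)$ defines $X+ay_j$. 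If $z$ lies in $(x_i+aY)\cap(x_{i'}+aY)$ then $x_i-x_{i'}\in a(Y-Y)\cap(X-X)=\{0\}$, so the first row is $2$‑inconsistent; likewise, using $ay_j\neq ay_{j'}$, the second row is $2$‑inconsistent; and $x_i+ay_j$ lies in $(x_i+aY)\cap(X+ay_j)$, so every column is consistent. This depth‑$2$ inp‑pattern contradicts inp‑minimality.

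\textbf{Assertion (2).} This is now formal. If $X$ is infinite definable with $X-X\subseteq R^\times\cup\{0\}$ and $b\in R\setminus\{0\}$, apply the first half of (1) with $Y=X$ — note $X-X$ contains a unit, so $b(X-X)\neq\{0\}$ and the degenerate case does not arise — to get a nonzero $c\in b(X-X)\cap(X-X)$, say $c=bd$ with $d\in X-X$. Then $d\neq 0$, so $c,d\in(X-X)\setminus\{0\}\subseteq R^\times$, whence $b=cd^{-1}\in R^\times$. Thus every nonzero element of $R$ is a unit, i.e.\ $R$ is a field. The converse is immediate ($X=R$ works), and for the last clause a definable infinite subring $S$ that is a field satisfies $S-S=S\subseteq R^\times\cup\{0\}$, so $R$ is a field by what we just proved.

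\textbf{The $\exists^\infty$ clause, and the main obstacle.} That an inp‑minimal (indeed dp‑minimal) integral domain eliminates $\exists^\infty$ is the delicate point: inp‑minimality alone does not suffice (a single equivalence relation with one class of every finite size is a counterexample), so the multiplicative structure must be used. I would deduce it from the field case. Since $S:=R\setminus\{0\}$ is definable, Proposition~\ref{P:localization-preserves-dp} gives that $(\Frac(R),R)$ is dp‑minimal (resp.\ inp‑minimal), hence a dp‑minimal expansion of a field; dp‑minimal fields (and expansions thereof) are known to eliminate $\exists^\infty$, and applying this in $(\Frac(R),R)$ to the formula $\varphi(x,\bar y)\wedge R(x)$, whose instances are precisely the sets $\{x\in R:\varphi(x,\bar y)\}$, yields a uniform bound on the finite traces in $R$. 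The obstacle is exactly this input: a direct inp‑pattern argument is hopeless, since in any depth‑$2$ pattern each row forces the sets in the other row to be infinite (a finite set meeting all of a $k$‑inconsistent row would need $\geq\omega$ points), so a definable family of finite sets of unbounded size never produces one — any self‑contained proof must therefore genuinely exploit how multiplication by nonzero elements acts, essentially reproving a fragment of the dp‑minimal fields machinery.
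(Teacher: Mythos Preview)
Your arguments for the first half of (1) and for (2) are correct and essentially match the paper's: the paper phrases (1) as ``otherwise the map $(x,y)\mapsto x+ay$ would be a definable injection from $X\times Y$ into $R$'', which is precisely the content of your inp-pattern, and your treatment of the degenerate non-domain case is no more and no less careful than the paper's own.

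The real difference is in the $\exists^\infty$ clause, and here you have made the problem look much harder than it is. The paper observes that, for an infinite integral domain $R$, the condition from (1) is actually an \emph{equivalence}: a definable set $X$ is infinite if and only if $a(X-X)\cap(X-X)\neq\{0\}$ for all nonzero $a$. The backward direction is a one-liner: if $X$ is finite but the condition holds, then every nonzero $a\in R$ can be written as $d_2/d_1$ with $d_1,d_2\in X-X$ and $d_1\neq 0$, so $R$ sits inside the finite set $\{c/b : c\in X-X,\ b\in(X-X)\setminus\{0\}\}\subseteq\Frac(R)$, contradicting $|R|=\infty$. Since the right-hand side of the equivalence is a first-order condition on the parameters defining $X$, elimination of $\exists^\infty$ follows immediately by compactness. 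This \emph{is} the ``fragment of the dp-minimal fields machinery'' you allude to --- it is two lines and works verbatim in any inp-minimal domain. There is no need to pass to $\Frac(R)$ via Proposition~\ref{P:localization-preserves-dp} or to invoke an external result (which, incidentally, you state for dp-minimal fields while the lemma concerns inp-minimal rings; the proof is the same, but the citation would need adjusting). Your detour is not wrong, but your diagnosis that a direct argument is ``hopeless'' led you to miss that the \emph{converse} of (1) --- which needs no inp-pattern at all, only that $R$ is a domain --- is exactly what finishes the proof.
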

\begin{proof}
  \textit{(1)} Otherwise, the function $(x,y)\mapsto x+ay$ would be a definable injection from $X\times Y$ into $R$, contradicting that $R$ is inp-minimal. 
  
  We show that for integral domains, a definable set $X$ is infinite if and only if for all $a\in R$, $a(X-X)\cap (X-X) \neq \set{0}$. The “only if” direction is the previous proof. For the “if”,  let $K=\Frac(R)$. If $R$ is finite, then the result holds trivially. Assume that $R$ is infinite, $X$ is finite, but that for all $a\in R\setminus \{0\}$, $a(X-X)\cap (X-X) \neq \set{0}$, then $R$ is included in $\set{a/b\mid a\in X-X, b\in (X-X)\setminus \set{0}}\subseteq K$ which is a finite set.
  
   \textit{(2)} If $R$ is a field we may take $X=R\setminus \set{0}$. For the other direction, assume that such an infinite definable set $X$ exists. Let $a\in R\setminus \set{0}$ and consider the function $X\times X\to R$ mapping $(x,y)\mapsto ax+y$. Since $R$ is inp-minimal, the map cannot be injective. Hence there exists $u\in X-X\subseteq R^\times$ with $a=u$ so $a$ is invertible.
\end{proof}

\section{Algebraic Properties of Dp-Minimal Domains}
Let $R$ be a dp-minimal domain. In Corollary \ref{C:prime-ideals-inp-minimal} we have seen that there exists $N\in \N$ such that for all $a,b\in R$ either $a^N\in \vect{b}$ or $b^N\in\vect{a}$. A connected notion is that of a divided domain.

\begin{definition}
(\cite{Aki67},\cite{Dob76A}) A prime ideal $\p$ of a ring $R$ is called \emph{divided} if for all $a\in R$ we have $\p \subseteq \vect{a}$ or $\vect{a}\subseteq \p$. Equivalently, a prime ideal $\p$ is divided if and only if $\p = \p R_\p$. A ring $R$ is \emph{divided} if every prime ideal of $R$ is divided.
\end{definition} 

\begin{remark}\label{R:divided}
It is not hard to see that a ring $R$ is divided if and only if for any $a,b\in R$ either $a\in\vect{b}$ or $b^n\in\vect{a}$ for some $n\in \mathbb{N}$. If $\p$ is a divided prime ideal then it is also comparable to every ideal.
\end{remark}

In this section we prove that every dp-minimal ring is divided. There is no hope to assume in general that all prime ideals are (externally) definable. Nevertheless, as the following will show, all prime ideal are an intersection of externally definable prime ideals.

\begin{lemma}\label{L:prime}
Let $R$ be an inp-minimal domain with maximal ideal $\M$.
\begin{enumerate}
    \item For each $a\in R$ there exists a unique ideal $P_a$ such that $P_a$ is maximal with the property $P_a\cap \set{a^n\mid n\in \N} = \emptyset$. $P_a$ is prime and externally definable.

    \item If $R$ is dp-minimal then $(R_{P_a},R)$ is dp-minimal in the language of rings with a predicate for $R$.
    \item For any prime ideal $\p$, $\p=\bigcap_{a\in R\setminus \p}P_a$.
\end{enumerate}
\end{lemma}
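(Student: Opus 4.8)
The plan is to handle the three parts in order, using the linear order on the prime spectrum (Corollary \ref{C:prime-ideals-inp-minimal}) as the structural backbone.

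For part (1), I would fix $a \in R$ and consider the set $\Sigma_a = \{a^n : n \in \N\}$, a multiplicatively closed subset. By a standard Zorn's lemma argument, there is an ideal $P_a$ maximal among ideals disjoint from $\Sigma_a$, and any such ideal is prime (this is the classical Krull fact: if $P_a$ is maximal avoiding a multiplicative set, it is prime). Uniqueness is the point where inp-minimality enters: if $P$ and $P'$ were two distinct such ideals, then by Corollary \ref{C:prime-ideals-inp-minimal} they are comparable, say $P \subsetneq P'$, contradicting maximality of $P$. So $P_a$ is unique; equivalently $P_a = \bigcap\{\p \text{ prime} : a \notin \p\}$ by the linear order, and this last description makes it the unique largest prime not containing $a$. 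For external definability, note that $\p$ is a prime not containing $a$ iff $a \notin \p$, and since the primes are linearly ordered, $P_a = \{x \in R : a^N \notin \vect{x} \text{ for all } N\}$ — I would check that this set is exactly $P_a$ using the uniform $N$ from Corollary \ref{C:prime-ideals-inp-minimal}: indeed $x \in P_a$ iff $\vect{x} \subseteq P_a$ iff it is not the case that $P_a \subsetneq \sqrt{\vect{x}}$, iff $a \notin \sqrt{\vect{x}}$, iff $a^N \notin \vect{x}$. This exhibits $P_a$ as definable by the formula $\neg \phi(x, a^N)$ in the language of rings (where $\phi(x,y)$ is $y \in \vect{x}$), so it is certainly externally definable — in fact definable.

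For part (2), having shown $P_a$ is externally definable (indeed $\Sigma_a$ is definable, being $\{a^n : n \in \N\}$... actually $\Sigma_a$ need not be definable, but $R \setminus P_a$, the saturation, is the relevant multiplicative set and $R_{P_a} = (R\setminus P_a)^{-1}R$): I would take $S = R \setminus P_a$, which is externally definable since $P_a$ is, and apply Proposition \ref{P:localization-preserves-dp}(2) to conclude $(S^{-1}R, R) = (R_{P_a}, R)$ is NIP, and then that $\dprk(R_{P_a}, R) < \kappa \iff \dprk(R) < \kappa$; taking $\kappa$ appropriately and using that $R$ is dp-minimal gives that $(R_{P_a}, R)$ is dp-minimal.

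For part (3), let $\p$ be any prime ideal. If $a \in R \setminus \p$, then $\p$ is a prime disjoint from $\Sigma_a$ (primeness of $\p$ gives $a^n \notin \p$ for all $n$), so $\p \subseteq P_a$ by the definition of $P_a$; hence $\p \subseteq \bigcap_{a \notin \p} P_a$. Conversely, if $b \notin \p$, then $b \in R \setminus \p$, and $b \notin P_b$ by construction (since $b = b^1 \in \Sigma_b$ which is disjoint from $P_b$), so $b \notin \bigcap_{a \notin \p} P_a$. Thus the intersection is contained in $\p$, giving equality.

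The main obstacle I anticipate is pinning down the external definability of $P_a$ in part (1) cleanly — specifically verifying that the naive definition $\{x : a^N \notin \vect{x}\}$ (with the uniform $N$) really equals $P_a$ rather than something slightly larger or smaller; this rests on the linear order of the spectrum and the fact that for a prime $\p$, $x \in \p \iff \sqrt{\vect{x}} \subseteq \p$, combined with $P_a$ being the largest prime avoiding $a$. Everything else is either classical commutative algebra (Krull's lemma on primes avoiding multiplicative sets) or a direct appeal to Proposition \ref{P:localization-preserves-dp}.
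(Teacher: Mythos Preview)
Your arguments for existence, primeness, and uniqueness of $P_a$ in part~(1), and your proofs of parts~(2) and~(3), match the paper's essentially verbatim.

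The gap is in your external definability argument. You claim that the uniform $N$ from Corollary~\ref{C:prime-ideals-inp-minimal} yields
\[
x \in P_a \iff a \notin \sqrt{\vect{x}} \iff a^N \notin \vect{x},
\]
so that $P_a$ is outright definable by $\neg(a^N \in \vect{x})$. The last equivalence fails: the uniform $N$ only tells you that $a^N \in \vect{x}$ or $x^N \in \vect{a}$, which does \emph{not} let you collapse the infinite conjunction $\forall n\,(a^n \notin \vect{x})$ to a single instance. Concretely, take $R = \Z_p$ (a valuation ring, so $N=1$ works), $a = p$, $x = p^2$: then $a^1 \notin \vect{x}$ but $a^2 \in \vect{x}$, so $x$ lies in your proposed set $\{x : a^N \notin \vect{x}\} = p\Z_p$ while $P_a = \{0\}$. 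In general $P_a$ need not be definable in the ring language at all.

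The paper instead proves only \emph{external} definability, and does so for the complement: it observes that
\[
R \setminus P_a \;=\; \bigcup_{n\in\N} \{\,x : a^n \in \vect{x}\,\},
\]
an increasing union of uniformly definable sets (the containment you already identified, without the spurious collapse to a single $N$), and then invokes the NIP fact that such a directed union is externally definable. You were one step away from this: your description $P_a = \{x : a^n \notin \vect{x} \text{ for all } n\}$ is correct, and its complement is exactly the union above; the error was believing the uniform $N$ made this a first-order formula rather than a type.
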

\begin{proof}
\textit{(1)} Let $a\in R$. By Zorn's lemma we may find a maximal ideal $P$ satisfying that $P\cap \set{a^n\mid n\in \N} = \emptyset$. Assume that $P$ is not a prime ideal, so there exists $x,y\in R$ such that $xy\in P$ and $x\notin P$, $y\notin P$. By maximality, $P+\vect{x}$ and $P+\vect{y}$ intersect $\set{a^n\mid n\in \N}$, i.e. there exists $c_1,c_2 \in P$ and $r_1,r_2\in R$ such that $c_1+r_1x$ and $c_2+r_2y$ are in $\set{a^n\mid n\in \N}$. It follows that the product $(c_1+r_1x)(c_2+r_2y)$ is in $P\cap \set{a^n\mid n\in \N}$ a contradiction. Uniqueness follows from Corollary~\ref{C:prime-ideals-inp-minimal}. 
    
We show that $R\setminus P$ is externally definable. Consider the uniformly definable family of sets $\varphi(x,a^n):=a^n\in \vect{x}$. The result will follow from \cite[Lemma 3.4]{definable-v-topologies} once we establish that \[\bigcup_{n\in \mathbb{N}}\varphi(R,a^n)= R\setminus P.\]
Let $b\in R\setminus P$, if we would have $\vect{b}\cap\{a^n:n\in \mathbb{N}\}=\emptyset,$ then $\vect{b}$ would be contained in some maximal ideal satisfying this property, by uniqueness $\vect{b}\subseteq P$, contradiction. For the other direction, if $a^n\in\vect{b}$, then we cannot have that $b\in P$.

\textit{(2)} By Proposition \ref{P:localization-preserves-dp}(2) and (1) the structure $(R_{P_a},R)$ is dp-minimal (in the ring language with a predicate for $R$).

\textit{(3)} Let $a\notin \p$. By the definition of $P_a$, $\p\subseteq P_a$. For the other direction, let $b\in\bigcap_{a\notin \p}P_a$. If $b\notin \p$ then $b\in P_b$, contradicting the definition of $P_b$.
\end{proof}

\begin{remark}
Note that for any $a\in R$, $R_{P_a}=S^{-1}R$, where $S=\set{a^n\mid n\in \mathbb{N}}$. The ideals of the form $P_a$ are the so-called Goldman ideals in the literature (see for instance \cite{Pic76}).
\end{remark}

\begin{fact}\label{F:almost-contain,inp}\cite[Proposition 4.31]{Sim2015}
Let $G$ be a inp-minimal group and $H,N$ definable subgroups. Then either $\abs{H/H\cap N}<\infty$ or $\abs{N/H\cap N}<\infty$.
\end{fact}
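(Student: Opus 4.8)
The plan is to argue by contradiction, producing an inp-pattern of depth $2$. So suppose both $[H:H\cap N]$ and $[N:H\cap N]$ are infinite; I will build two rows of formulas that are row-wise inconsistent but path-consistent, which is impossible in an inp-minimal group.

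For the first row I would use that $[H:H\cap N]$ is infinite to pick $h_j\in H$ ($j<\omega$) lying in pairwise distinct cosets of $H\cap N$ in $H$; since each $h_j$ lies in $H$, this is equivalent to the left cosets $h_jN$ being pairwise distinct. Set $\varphi_1(x;y):=y^{-1}x\in N$, so that $\varphi_1(x;h_j)$ defines $h_jN$. Symmetrically, using that $[N:H\cap N]$ is infinite, pick $n_j\in N$ in pairwise distinct cosets of $H\cap N$ in $N$, equivalently with the right cosets $Hn_j$ pairwise distinct, and set $\varphi_2(x;z):=xz^{-1}\in H$, so that $\varphi_2(x;n_j)$ defines $Hn_j$. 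Both are genuine formulas since $H$ and $N$ are definable.

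Next I would verify the two conditions in the definition of an inp-pattern. Each row is $2$-inconsistent, since distinct left cosets of $N$ are disjoint and distinct right cosets of $H$ are disjoint, so we may take $k_1=k_2=2$. For path-consistency, given any $j_1,j_2<\omega$, the element $h_{j_1}n_{j_2}$ lies in $h_{j_1}N$ and, because $h_{j_1}\in H$, also in $Hn_{j_2}$; hence $\{\varphi_1(x;h_{j_1}),\varphi_2(x;n_{j_2})\}$ is realized. This is an inp-pattern of depth $2$, contradicting inp-minimality of $G$, and so one of $[H:H\cap N]$, $[N:H\cap N]$ must be finite.

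The one point that needs care is the left/right coset bookkeeping: the choice of witness $h_{j_1}n_{j_2}$ forces the first row to be built from left cosets of $N$ and the second from right cosets of $H$, and one should check that the number of distinct cosets obtained in each row equals the index $[H:H\cap N]$, respectively $[N:H\cap N]$ — this works precisely because the chosen representatives already lie in $H$, respectively $N$, so no hypothesis on $HN$ (normality, or $HN=NH$) is needed. Everything else is a routine verification against the definitions, and this recovers the standard argument for \cite[Proposition 4.31]{Sim2015}.
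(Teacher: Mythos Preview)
Your argument is correct and is precisely the standard proof of \cite[Proposition~4.31]{Sim2015}; the paper does not give its own proof of this Fact but simply cites Simon, so there is nothing to compare against beyond noting that you have reproduced the cited argument accurately.
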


\begin{lemma}\label{L:eran}
  Let $R$ be an inp-minimal integral domain with maximal ideal $\M$. If $R$ contains an infinite set $F$ such that $F-F\subseteq R^{\times}\cup\set{0}$ then $R$ is a valuation ring. In particular, 
  \begin{enumerate}
      \item if $R/\M$ is infinite then $R$ is a valuation ring;
    \item if $R$ has an infinite subring which is a field then $R$ is a valuation ring.     
  \end{enumerate}

\end{lemma}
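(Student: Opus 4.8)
The plan is to reduce everything to showing that any two nonzero elements are comparable under divisibility, i.e. that for all $a,b\in R$ either $a\in bR$ or $b\in aR$; this is precisely what it means for the local domain $R$ (it is local by Corollary \ref{C:inp-minimal-implies local}) to be a valuation ring. First I would fix such $a,b$ and use the hypothesis to extract the relevant finiteness. Consider the principal ideals $aR$ and $bR$ as subgroups of the additive group $(R,+)$. Since $R$ is inp-minimal, so is $(R,+)$, and Fact \ref{F:almost-contain,inp} applies to the definable subgroups $aR$ and $bR$: either $[aR : aR\cap bR]<\infty$ or $[bR : aR\cap bR]<\infty$. Say, without loss of generality, $aR\cap bR$ has finite index $m$ in $aR$.

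Next I would turn this finite-index statement into a divisibility statement using the infinite set $F$ with $F-F\subseteq R^\times\cup\{0\}$. The cosets $aF=\{af : f\in F\}$ all lie in $aR$, and for $f\neq f'$ we have $af - af' = a(f-f')$ with $f-f'$ a unit, so these are $|F|$ many elements of $aR$ lying in finitely many cosets of $aR\cap bR$; hence two of them, $af$ and $af'$ with $f\neq f'$, are in the same coset, so $a(f-f')\in aR\cap bR\subseteq bR$. Since $f-f'$ is a unit, $a\in bR$. This already shows $R$ is a valuation ring, and then the two itemized consequences follow: in case (1), $R/\M$ infinite, lift any infinite subset of $R/\M$ to a set $F\subseteq R$; distinct elements of $F$ differ by something not in $\M$, i.e. a unit, so $F-F\subseteq R^\times\cup\{0\}$; in case (2), an infinite subfield $F$ of $R$ has $F-F\subseteq F^\times\cup\{0\}\subseteq R^\times\cup\{0\}$ directly (note $F$'s nonzero elements are units in $F$ hence in $R$), and in fact Lemma \ref{L:eliminates-infty}(2) already records this.

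I expect the main subtlety to be the bookkeeping in the reduction rather than any deep point: one must be careful that $aR\cap bR$ is genuinely a definable subgroup (it is, being defined by $x\in aR \wedge x\in bR$) so that Fact \ref{F:almost-contain,inp} applies, and that the index being finite is used only to force a collision among the $|F|$-many translates $af$, which needs $F$ infinite — a finite $F$ would give nothing. It is also worth remarking that the argument does not need $a,b\neq 0$ handled separately: if either is $0$ the divisibility is trivial. One should double-check that the ``In particular'' clauses do not secretly need dp-minimality: they do not — inp-minimality suffices throughout, consistent with the hypothesis of the lemma — and case (1) only uses that $R$ is local so that non-units are exactly the elements of $\M$.
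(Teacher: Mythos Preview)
Your argument is correct and is essentially identical to the paper's: both apply Fact~\ref{F:almost-contain,inp} to the principal ideals $\vect{a}$ and $\vect{b}$, then use the pigeonhole collision among the $af$'s to extract a unit multiple of $a$ in $\vect{b}$. You additionally spell out the ``in particular'' clauses (1) and (2), which the paper leaves to the reader; one small quibble is that your aside invoking Lemma~\ref{L:eliminates-infty}(2) is not quite on point (that lemma concerns when $R$ itself is a field), but your direct argument for (2) is already complete without it.
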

  \begin{proof}
    Assume that such a set $F$ exists, and let $(f_i)_{i<\omega}\subseteq F$ be such that $f_i\neq f_j$ for all $i\neq j$. Let $a,b\in R$ be nonzero elements. Then by Fact \ref{F:almost-contain,inp}, either $\vect{a}/(\vect{a}\cap \vect{b})$ is finite or $\vect{b}/(\vect{a}\cap \vect{b})$ is finite. Without loss of generality, assume that $\vect{a}/(\vect{a}\cap \vect{b})$ is finite. As $(f_ia)_i\subseteq \vect{a}$, it follows that there exists $i\neq j$ such that $(f_ia-f_ja)\in \vect{b}$. As $F-F\subseteq R^{\times}\cup\set{0}$, we conclude that $a\in \vect{b}$.
  \end{proof}

\begin{remark}\label{R:prime-inft-index}
In a local ring, every non-maximal radical ideal has infinite index in the maximal ideal $\M$ (as additive groups). Indeed, if $\rr\subsetneq \M$ is a radical ideal then for any $b\in \M\setminus \rr$ and $n\neq m\in \mathbb{N}$, $b^n$ and $b^m$ are in different classes modulo $\rr$.
\end{remark}

As a result we get the following, which is almost the definition of a Pr\"ufer domain.

\begin{proposition}\label{P:local-is-val}
Let $R$ be a dp-minimal domain with maximal ideal $\M$ and $\p$ a non-maximal prime ideal. Then $R_\p$ is a (possibly trivial) henselian valuation ring.
\end{proposition}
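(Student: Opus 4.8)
The plan is to show $R_\p$ is a valuation ring, then invoke Corollary~\ref{C:dp-min,pNIP-henselian} to get henselianity for free. By Lemma~\ref{L:prime}(2), the structure $(R_{P_\p'},R)$ — where I pick some $a\notin\p$ with $\p\subseteq P_a$, or more directly work with $R_\p$ via the externally definable primes — is dp-minimal. The cleanest route: since $\p$ is non-maximal, choose $c\in\M\setminus\p$; then $P_c\supseteq\p$ is a Goldman prime with $c\notin P_c$, and $(R_{P_c},R)$ is dp-minimal by Lemma~\ref{L:prime}(2). It suffices to prove $R_{P_c}$ is a valuation ring, because $R_\p=(R_{P_c})_{\p R_{P_c}}$ is then a localization of a valuation ring, hence a valuation ring.

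So I reduce to the following situation: a dp-minimal domain $R'=R_{P_c}$ (carrying a predicate for a subring, but that only adds structure, and $R'$ itself is dp-minimal as a ring) whose maximal ideal $P_cR_{P_c}$ strictly contains the prime $\p R_{P_c}$. First I would argue that $R'$ is local (Corollary~\ref{C:inp-minimal-implies local}) with maximal ideal $\M'$, and that $\M'\neq\p R_{P_c}$ means there is a non-maximal radical ideal properly contained in $\M'$. By Remark~\ref{R:prime-inft-index}, every non-maximal radical ideal has infinite index in $\M'$ as an additive group. Now I want to produce an infinite set $F$ with $F-F\subseteq (R')^\times\cup\{0\}$ and apply Lemma~\ref{L:eran}. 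The natural candidate: take $b\in\M'$ not in the smaller prime, look at the ideal chain, and exhibit infinitely many elements of $R'$ that are pairwise-difference units. Concretely, since $\p R_{P_c}$ is non-maximal and prime, $R'/\p R_{P_c}$ is a domain that is not a field and is still dp-minimal (it is interpretable, being a quotient by a definable — indeed externally definable — ideal), so by Lemma~\ref{L:eliminates-infty}(2) applied upstairs it will suffice to find inside $R'$ a lift of an infinite "field-like" set; alternatively use that $R'/\p R_{P_c}$ has residue objects forcing infiniteness of the fraction-field picture.

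The main obstacle is exactly this construction of the set $F$: one must convert "there is a non-maximal prime strictly below $\M'$" into "there is an infinite set whose differences are all units." The idea: if $R'$ is \emph{not} a valuation ring, then by Lemma~\ref{L:eran} no such $F$ exists, so in particular $R'/\M'$ is finite and $R'$ has no infinite subfield. But the quotient $R'/\p R_{P_c}$ is an inp-minimal domain which is not a field; if I can show its fraction field sits as (or contains) an infinite definable-in-$R'$ set with unit differences I reach a contradiction. A more hands-on approach, likely what is intended: let $\q=\p R_{P_c}$ and pick $t\in\M'\setminus\q$; using Corollary~\ref{C:prime-ideals-inp-minimal} there is $N$ with comparability $x^N\in\langle y\rangle$ or $y^N\in\langle x\rangle$; combined with the divided-ring phenomenon one shows $\q R'_\q=\q$, i.e. every element of $\M'\setminus\q$ becomes invertible after inverting nothing new, forcing a coset structure on $R'/\q$ that is either a field (done: $R_\p$ trivial or a field) or yields the infinite unit-difference set. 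I would first nail down the reduction and the local structure, then spend the bulk of the argument on extracting $F$, and finish by citing Lemma~\ref{L:eran} and Corollary~\ref{C:dp-min,pNIP-henselian}; the "possibly trivial" clause covers the case $R_\p=\Frac(R)$, which happens precisely when $\p$ sits maximally among the visible data.
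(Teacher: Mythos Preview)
Your overall skeleton is right: reduce to $R_{P_c}$ for some $c\in\M\setminus\p$, use Lemma~\ref{L:prime}(2) to get dp-minimality of $(R_{P_c},R)$, apply Lemma~\ref{L:eran}, then cite Corollary~\ref{C:dp-min,pNIP-henselian}. But you are missing the one-line observation that makes Lemma~\ref{L:eran} fire, and as a result you wander into several dead ends.

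The point you need is that the \emph{residue field} of $R_{P_c}$ is infinite. Since $c\in\M$ and $c\notin P_c$, the prime $P_c$ is non-maximal in $R$; by Remark~\ref{R:prime-inft-index} the quotient $R/P_c$ is infinite, and $R/P_c$ embeds in $R_{P_c}/P_cR_{P_c}$. Now Lemma~\ref{L:eran}(1) applies directly to the dp-minimal ring $R_{P_c}$ and you are done. You invoke Remark~\ref{R:prime-inft-index} but apply it \emph{inside} $R'=R_{P_c}$ to the ideal $\p R_{P_c}$, concluding that $\p R_{P_c}$ has infinite index in $\M'$; that is true but irrelevant---what Lemma~\ref{L:eran}(1) needs is that $\M'$ has infinite index in $R'$, and for that you should apply the remark in $R$ to $P_c$, not in $R'$ to $\p R_{P_c}$.

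Two of your alternative routes are actually problematic. First, the appeal to the ``divided-ring phenomenon'' is circular: Theorem~\ref{T:dp-is-divided} is proved \emph{using} this proposition. Second, the claim that $\p R_{P_c}$ is an externally definable ideal (so that the quotient is interpretable) is unjustified; only the Goldman primes $P_a$ are shown to be externally definable in Lemma~\ref{L:prime}, and an arbitrary $\p$ need not be of that form.
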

\begin{proof}
Since $\p$ is non-maximal, by Lemma \ref{L:prime}(2) and since the prime ideal are linearly ordered by Corollary \ref{C:prime-ideals-inp-minimal} there exists some $a\in \M\setminus \p$ with $\p\subseteq P_a\subsetneq \M$. This implies that $R_{P_a}\subseteq R_\p$ and as a result it is enough to show that $R_{P_a}$ is a valuation ring.  

We note that since $\abs{R/\p}\leq \abs{R_{P_a}/P_aR_{P_a}}$, the maximal ideal $P_aR_{P_a}$ of $R_{P_a}$ has infinite index. Note that $(R_{P_a},R)$ is dp-minimal by Lemma \ref{L:prime}(2).  By Lemma \ref{L:eran}, $R_{P_a}$ (and thus also $R_\p$) is valuation ring. It is henselian by Corollary \ref{C:dp-min,pNIP-henselian}.
\end{proof}

\begin{theorem}\label{T:dp-is-divided}
Every dp-minimal domain is divided. In particular, there exists $N\in\mathbb{N}$ such that for all $a,b$ in the maximal ideal, either $a\in\vect{b}$ or $b^N\in \vect{a}$.
\end{theorem}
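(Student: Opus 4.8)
The plan is to establish the internal criterion for dividedness from Remark~\ref{R:divided}: for all $a,b\in R$, either $a\in\vect b$ or $b^n\in\vect a$ for some $n$. Fix $a,b\in R$. The cases $a=0$, $b=0$, and $b$ a unit are immediate, so assume $a,b\neq 0$ and $b\in\M$ (recall $R$ is local by Corollary~\ref{C:inp-minimal-implies local}). By Corollary~\ref{C:prime-ideals-inp-minimal} the radicals $\sqrt{\vect a}$ and $\sqrt{\vect b}$ are comparable primes; if $\sqrt{\vect b}\subseteq\sqrt{\vect a}$ then $b\in\sqrt{\vect a}$ and we are done, so we may assume $\sqrt{\vect a}\subsetneq\sqrt{\vect b}\subseteq\M$, and the goal becomes the stronger statement $a\in\vect b$. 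In this situation $b\notin\sqrt{\vect a}$, so the prime $\sqrt{\vect a}$ is disjoint from $\{b^n:n\geq 1\}$ and hence contained in the Goldman prime $P_b$ of Lemma~\ref{L:prime}. Thus $a\in P_b$, while $b\notin P_b$ and $P_b\subsetneq\M$ is non-maximal.

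Next I would set $\OO:=R_{P_b}$, with maximal ideal $\m_\OO=P_bR_{P_b}$. By Lemma~\ref{L:prime}(2) the pair $(\OO,R)$ is dp-minimal, and by Proposition~\ref{P:local-is-val} $\OO$ is a valuation ring, which is moreover non-trivial (otherwise $P_b=0$, forcing $a=0$). Since $a\in P_b\subseteq\m_\OO$ and $b\in R\setminus P_b=R\cap\OO^\times$, the element $a/b$ lies in $\m_\OO$. So it suffices to prove the key claim that $\m_\OO\subseteq R$: then $a/b\in R$, i.e.\ $a\in\vect b$.

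To prove $\m_\OO\subseteq R$, I would work in the inp-minimal group $(\OO,+)$ with its definable subgroups $R$ and $\m_\OO$, whose intersection is $P_b$. As $P_b$ is non-maximal, $R/P_b$ is an infinite integral domain, so $[R:P_b]=\infty$; hence by Fact~\ref{F:almost-contain,inp} the index $[\m_\OO:P_b]$ is finite. Since $P_b=\m_\OO\cap R$ is an $R$-submodule of the ideal $\m_\OO$, the quotient $\m_\OO/P_b$ is a finite $R$-module, so its annihilator $J:=\mathrm{Ann}_R(\m_\OO/P_b)$ is an ideal with $R/J$ finite. If $J=R$ then $\m_\OO\subseteq P_b\subseteq R$. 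Otherwise $\sqrt J$ is a proper finite-index radical ideal, hence prime with finite quotient, hence equal to $\M$ (as $R$ is local). Then $b\in\M=\sqrt J$ gives $b^k\in J$ for some $k$, i.e.\ $b^k\m_\OO\subseteq P_b$; but $b^k$ is a unit of $\OO$, so $b^k\m_\OO=\m_\OO$, and therefore $\m_\OO\subseteq P_b\subseteq R$. This proves the claim, hence that $R$ is divided; the uniform bound $N$ for $a,b\in\M$ then follows by a routine compactness argument in a monster model (still a dp-minimal domain), as at the end of the proof of Corollary~\ref{C:prime-ideals-inp-minimal}.

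The step I expect to require the most care is the key claim $\m_\OO\subseteq R$ — equivalently, the upgrade from $a^n\in\vect b$ to $a\in\vect b$. The naive approach, controlling the finite group $\m_\OO/P_b$ only through its exponent, breaks down in positive residue characteristic; the resolution is to pass to the annihilator ideal $J$ and exploit that $b$ is at once an element of $\M$ (so a power of $b$ annihilates $\m_\OO/P_b$) and a unit of $\OO=R_{P_b}$ (so multiplying $\m_\OO$ by that power of $b$ changes nothing). The remaining ingredients are exactly Corollary~\ref{C:prime-ideals-inp-minimal}, Lemma~\ref{L:prime}, Proposition~\ref{P:local-is-val}, and Fact~\ref{F:almost-contain,inp}.
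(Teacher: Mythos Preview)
Your proof is correct and shares the same overall skeleton as the paper's: reduce to showing $P_bR_{P_b}\subseteq R$ for a non-maximal Goldman prime $P_b$, work in the dp-minimal pair $(R_{P_b},R)$, and use Fact~\ref{F:almost-contain,inp} together with $[R:P_b]=\infty$ to get that $[\m_\OO:P_b]$ is finite.

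The difference is in how the finite index of $P_b$ in $\m_\OO$ is upgraded to $\m_\OO\subseteq R$. The paper passes to the CPI-extension $R+P_bR_{P_b}$, observes it is a finitely generated $R$-module which is local with maximal ideal $\M+P_bR_{P_b}$, and applies Nakayama's lemma to conclude $R+P_bR_{P_b}=R$. You instead look at the annihilator $J=\mathrm{Ann}_R(\m_\OO/P_b)$, use that $R/J$ is finite to force $\sqrt J=\M$, and then exploit the single element $b$ which lies in $\M$ (hence some $b^k\in J$) but is a unit of $\OO$ (hence $b^k\m_\OO=\m_\OO$). Your argument is a bit more elementary (no Nakayama, no CPI-extensions) and makes direct use of the Goldman-prime setup; the paper's Nakayama route is more structural and is what motivates the remark about CPI-extensions and \cite{Dob78}. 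Both are perfectly valid.
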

\begin{proof}
Let $R$ be a dp-minimal ring with maximal ideal $\M$ and let $\p$ be a prime ideal. Since $R$ is a local ring, the maximal ideal is divided and so we may assume that $\p\subsetneq \M$. Since $R\cap \p R_\p=\p$, it will be enough to show that $\p R_\p\subseteq R$. By Lemma \ref{L:prime}(3), there exists some $a\in \M\setminus \p$ such that $\p R_\p\subseteq P_aR_{P_a}$ and hence it is enough to show that $P_a$ is divided. Set $P:=P_a$. By Lemma \ref{L:prime}(2), $(R_{P},R)$ is dp-minimal. 

As $PR_{P}$ and $R$ are two definable (additive) subgroups in this structure, by Fact \ref{F:almost-contain,inp}, either $\abs{R/P}<\infty$ or $\abs{PR_{P}/P}<\infty$. Since $P$ necessarily has infinite index in $R$ it must be the latter. Let $y_1,\dots,y_n$ be representatives for the different cosets of $P$ in $PR_{P}$. In particular $\{1,y_1,\dots,y_n\}$ generate $R+PR_{P}$ as an $R$-module, i.e. it is a finitely generated $R$-module.

Either by direct computation or by dp-minimality, it is not hard to see that $R+PR_P$ is a local ring and its maximal ideal is $\M+PR_P$. Consider the dp-minimal ring $R+PR_P$ as an $R$-module. Since $\M(R+PR_P)=\M+PR_P$ and $(R+PR_P)/(\M+PR_P)\cong R/\M$, by Nakayama's lemma the generator $1+(\M+PR_P)$ of $(R+PR_P)/(\M+PR_P)$ lifts to a generator of $R+PR_P$ as an $R$-module, i.e. $R=R+PR_P$, as needed.

The ``in particular" follows by compactness and Remark \ref{R:divided}.
\end{proof}

\begin{remark}
Overrings of the form $R+PR_P$ are called CPI-extensions of $R$~\cite{boisensheldon}. We learned the Nakayama trick for CPI-extensions from~\cite[Lemma 2.3]{Dob78}.
\end{remark}

\begin{corollary}\label{C:dp-min_val_divided_prime}
Let $R$ be a dp-minimal domain. Then $R$ is a (henselian) valuation ring if and only if there exists a non-maximal prime ideal $\p$ such that $R/\p$ is a valuation ring. 

In particular, if the maximal ideal of $R$ is principal and $R$ has a finite residue field then $R$ is a valuation ring.
\end{corollary}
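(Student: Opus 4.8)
The plan is to establish the two directions of the equivalence and then read off the ``in particular''. For the substantial direction, assume $\p$ is a non-maximal prime with $R/\p$ a valuation ring, and put $K:=\Frac(R)$. By Proposition~\ref{P:local-is-val}, $R_\p$ is a henselian valuation ring; as $R\subseteq R_\p\subseteq K$, it is a valuation ring of $K$. By Theorem~\ref{T:dp-is-divided} the ring $R$ is divided, so $\p R_\p=\p$; hence $\p$ is the maximal ideal of $R_\p$, it lies inside $R$, and the residue field $\kappa:=R_\p/\p R_\p$ is canonically $\Frac(R/\p)$. The first key step is to check that $R$ is precisely the preimage $\pi^{-1}(R/\p)$ of $R/\p$ under the residue map $\pi\colon R_\p\to\kappa$: the inclusion $R\subseteq\pi^{-1}(R/\p)$ is immediate, and if $x\in R_\p$ satisfies $\pi(x)\in R/\p$ then, choosing $r\in R$ with $\pi(x)=\pi(r)$, we get $x-r\in\ker\pi=\p\subseteq R$, so $x\in R$. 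Since $R/\p$ is a valuation ring of $\kappa$, this realizes $R$ as a composite of valuation rings, and I would conclude that $R$ is a valuation ring of $K$ directly: given $x\in K^\times$, one of $x,x^{-1}$ lies in $R_\p$, say $x$; if $\pi(x)=0$ then $x\in\p\subseteq R$, while otherwise $x\in R_\p^\times$ and one of $\pi(x),\pi(x^{-1})$ lies in $R/\p$, whence one of $x,x^{-1}$ lies in $\pi^{-1}(R/\p)=R$. Finally, $R$ is henselian by Corollary~\ref{C:dp-min,pNIP-henselian}.

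For the converse, if $R$ is a valuation ring that is not a field then $(0)$ is a non-maximal prime and $R/(0)=R$ is a valuation ring; a field is itself trivially a valuation ring, but has no non-maximal prime, so the stated equivalence is to be read with $R$ assumed not a field.

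For the ``in particular'' I may assume $R$ is not a field and write $\M=tR$. Since $\Spec(R)$ is linearly ordered (Corollary~\ref{C:prime-ideals-inp-minimal}), the union $\p^{*}$ of all non-maximal prime ideals — a nonempty chain, as $(0)$ is one of them — is a prime ideal, and $\p^{*}\subsetneq\M$: otherwise $t\in\p^{*}$ would lie in some proper prime $\q$, forcing $\M=tR\subseteq\q\subsetneq\M$. Consequently $\Spec(R/\p^{*})=\{(0),\,\M/\p^{*}\}$, and with $S:=R/\p^{*}$ and $s:=t+\p^{*}$ the maximal ideal $\mathfrak{n}:=\M/\p^{*}=sS$ of $S$ is principal. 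The plan is to show $\bigcap_{n}\mathfrak{n}^{n}=0$: the ideal $\q:=\bigcap_{n}s^{n}S$ is prime, for if $ab\in\q$ with $a\notin\q$ and $m$ is least with $a\notin s^{m}S$, then $a=s^{m-1}u$ with $u\in S\setminus\mathfrak{n}=S^{\times}$, and cancelling $s^{m-1}$ in the relations $s^{m-1}ub\in s^{n}S$ for $n\geq m-1$ yields $b\in s^{k}S$ for every $k\geq 0$, i.e. $b\in\q$; moreover $\q\subsetneq\mathfrak{n}$ (else $s\in s^{2}S$ makes $s$ a unit), so linearity of $\Spec(S)$ forces $\q=(0)$. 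A local domain whose maximal ideal is principal and the intersection of whose powers vanishes is a discrete valuation ring, so $S=R/\p^{*}$ is a valuation ring; by the equivalence above, $R$ is a valuation ring.

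The main obstacle is the identification $R=\pi^{-1}(R/\p)$ in the substantial direction: this is exactly where the divided property $\p R_\p=\p$ is indispensable, both to place $\p$ inside $R$ and to correct a given element of $R_\p$ by an element of $R$ modulo $\p$. The other point requiring care is the vanishing $\bigcap_{n}\mathfrak{n}^{n}=0$ in the ``in particular'', which however reduces to the elementary observation that this intersection is a prime ideal properly contained in the maximal ideal.
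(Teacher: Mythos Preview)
Your proof is correct. For the main equivalence you follow the same route as the paper: both arguments use Proposition~\ref{P:local-is-val} and the divided property (Theorem~\ref{T:dp-is-divided}) to identify $R$ with the pullback $\pi^{-1}(R/\p)=R+\p R_\p$ (what the paper, following \cite{boisensheldon}, calls the CPI-extension), and then recognise this as a composite of valuation rings. Your remark that the equivalence fails for fields is a fair caveat that the paper leaves implicit.

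For the ``in particular'' you take a genuinely different path. The paper works with the prime $P_\pi$ (which coincides with your $\p^{*}$, since $\pi\notin\q$ for every non-maximal prime $\q$) and uses the \emph{finite residue field} hypothesis: from $\pi^{n}+P_\pi\in a(R/P_\pi)$ and $|R/\M|<\infty$ it deduces that every nonzero principal ideal of $R/P_\pi$ has finite index, whence $R/P_\pi$ is Noetherian and therefore a DVR. You instead show directly that $\q=\bigcap_{n} s^{n}S$ is a prime ideal strictly contained in $\mathfrak{n}$, hence zero since $\Spec(S)=\{(0),\mathfrak{n}\}$, and conclude $S$ is a DVR. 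Your argument never invokes $|R/\M|<\infty$, so it actually proves the unconditional statement that a dp-minimal domain with principal maximal ideal is a valuation ring. This is not new information in the end---Lemma~\ref{L:eran} already handles the infinite-residue-field case---but your route is self-contained and avoids the Noetherianity detour.
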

\begin{proof}
If $R$ is a valuation ring and $\p$ is any non-maximal ideal then $R/\p$ is also a valuation ring.

For the other direction, assume that $\p$ is a non-maximal prime ideal and that $R/\p$ is a valuation ring. By Lemma~\ref{P:local-is-val}, $R_\p$ is a valuation ring. By definition, $R+\p R_\p$ is the composition of the valuation ring $R_\p$ and the valuation ring $R/\p\cong (R+\p R_\p)/\p R_\p$ of the field $R_\p/\p R_\p$ (see also \cite[page 724]{boisensheldon}) and hence a valuation ring as well. Since $R$ is divided, $R+\p R_\p=R$ and we conclude that $R$ is a valuation ring.

Let $\M$ be the maximal ideal of $R$, assume that $\M=\vect{\pi}$ is principal and that $R/\M$ is finite. Let $P_{\pi}$ be the maximal ideal that does not intersect $\{\pi,\pi^2,\dots\}$. By definition, for all $0\neq a\in R/P_\pi$, $\pi^n+P_\pi\in aR/P_\pi$ for some $n$. Since $R/\M$ is finite, it follows that $aR/P_\pi$ is of finite index in $R/P_\pi$. Consequently, $R/P_\pi$ is a Noetherian local domain with principal maximal ideal, hence a valuation ring. 
\end{proof}

Let $R$ be any integral domain, $K$ its fraction field and let $I$ be an ideal. We denote
\[I:I=\set{x\in K\mid xI\subseteq I} \text{ and }\]
\[I^{-1}=\set{x\in K\mid xI\subseteq R}.\]

\begin{corollary}
Let $R$ be a dp-minimal domain and $\p$ a non-maximal prime ideal then
\[\p:\p=R_\p.\]

If $\M$ is the maximal ideal and $R$ is not a valuation ring then
\[\M^{-1}=\M:\M.\] 
\end{corollary}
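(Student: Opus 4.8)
The plan is to handle the two identities separately, drawing on the fact that $R$ is divided (Theorem~\ref{T:dp-is-divided}) and that each localization $R_\p$ at a non-maximal prime is a valuation ring of $K$ (Proposition~\ref{P:local-is-val}).

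For the first identity, $\p:\p = R_\p$: the inclusion $R_\p \subseteq \p:\p$ is immediate, since for $x\in R_\p$ and $p\in\p$ we get $xp\in\p R_\p$, and $\p R_\p=\p$ because $\p$ is divided. For the reverse, I would argue by contradiction: if $\p=0$ both sides are all of $K$, so assume $\p\neq 0$ and pick $x\in\p:\p$ with $x\notin R_\p$. Since $R_\p$ is a valuation ring of $K$, $x^{-1}$ lies in its maximal ideal, which is $\p R_\p=\p$; hence $x^{-1}\in\p$, and then $1=x\cdot x^{-1}\in x\p\subseteq\p$, contradicting that $\p$ is proper. So $\p:\p=R_\p$.

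For the second identity, $\M^{-1}=\M:\M$ under the hypothesis that $R$ is not a valuation ring: the inclusion $\M:\M\subseteq\M^{-1}$ is trivial from $\M\subseteq R$. For the reverse, take $x\in\M^{-1}$ and note that $x\M$ is an $R$-submodule of $R$, i.e.\ an ideal of $R$. If $x\M\not\subseteq\M$, then $x\M$ contains a unit, so $x\M=R$, which says precisely that $\M\cdot\M^{-1}=R$, i.e.\ $\M$ is an invertible ideal. The standard argument for invertible ideals in a local ring then applies: writing $1=\sum_i a_ib_i$ with $a_i\in\M$, $b_i\in\M^{-1}$, some $a_jb_j$ must be a unit (as $R$ is local), and one checks directly that $\M=a_jR$ is principal. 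Now invoke the classification: if $R/\M$ is infinite then $R$ is a valuation ring by Lemma~\ref{L:eran}(1); if $R/\M$ is finite and $\M$ is principal then $R$ is a valuation ring by Corollary~\ref{C:dp-min_val_divided_prime}. Either way $R$ is a valuation ring, contradicting the hypothesis. Hence $x\M\subseteq\M$, i.e.\ $x\in\M:\M$, and the two ideals coincide.

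The arguments are short and largely formal; the only genuine idea is recognizing that a counterexample to the second identity forces $\M$ to be invertible, hence principal, and that principality of $\M$ together with the earlier results forces $R$ to be a valuation ring. The one point requiring a little care is the passage ``invertible maximal ideal $\Rightarrow$ principal'' in a local ring, and making sure the two inputs from the classification (Lemma~\ref{L:eran}(1) and Corollary~\ref{C:dp-min_val_divided_prime}) are assembled to cover both the infinite-residue-field and finite-residue-field cases; beyond that there is no real obstacle.
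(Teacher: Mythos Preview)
Your proof is correct and follows essentially the same route as the paper's: both parts rest on $R$ being divided (so $\p R_\p=\p$), on $R_\p$ being a valuation ring for non-maximal $\p$, and on the fact that a principal maximal ideal would force $R$ to be a valuation ring. The differences are purely presentational: for the first identity the paper simply cites \cite[Lemma 6]{Oka84} whereas you unpack the argument directly, and for the second the paper observes in one line that if $x\M$ meets $R\setminus\M$ then $\M=x^{-1}R$ is principal, while you take the (equally valid) detour through ``$\M$ invertible $\Rightarrow$ $\M$ principal in a local ring''; your explicit split into the infinite/finite residue field cases also makes transparent what the paper leaves implicit when it invokes only Corollary~\ref{C:dp-min_val_divided_prime}.
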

\begin{proof}
The first assertion is a consequence of Theorem~\ref{T:dp-is-divided}, Proposition~\ref{P:local-is-val} and \cite[Lemma 6]{Oka84}.

Let $\M$ be the maximal ideal of $R$ and assume that $R$ is not a valuation ring. By Corollary \ref{C:dp-min_val_divided_prime}, $\M$ is not principal.  Assume there exists $x\in K$ with $x\M\cap (R\setminus \M)\neq \emptyset$ so $x^{-1}R=\M$. This implies that $\M$ is a principal ideal of $R$, contradicting our assumption. 
\end{proof}

For a (type-)definable group $G$ in an NIP structure we denote by $G^{00}$ the smallest type-definable subgroup of $G$ of bounded index, for more information see \cite[Section 8.1.3]{Sim2015}.

\begin{fact}\cite[The proof of Lemma 2.6]{johnson-positive}
Let $R$ be an NIP integral domain and $I$ a definable ideal. Then $I^{00}$ is also an ideal. 
\end{fact}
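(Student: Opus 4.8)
The plan is to use that, since $R$ is NIP and $I$ is a definable ideal, the additive group $(I,+)$ is a definable abelian group, so that the smallest bounded-index subgroup of $(I,+)$ which is type-definable over a small set of parameters, namely $I^{00}$, exists; moreover, as always in NIP, this subgroup does not depend on the chosen parameter set, so it is contained in \emph{every} bounded-index subgroup of $(I,+)$ that is type-definable over a small set (see \cite[Section 8.1.3]{Sim2015}). This is the only point at which NIP is used. Since $I^{00}$ is a subgroup of $(I,+)$ it is automatically closed under addition, closed under additive inverses, and contains $0$; so it remains only to check that $rI^{00}\subseteq I^{00}$ for every $r\in R$, after which $I^{00}\subseteq I\subseteq R$ is an ideal of $R$.

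To carry this out, I would fix $r\in R$ and let $\mu_r\colon(I,+)\to(I,+)$, $x\mapsto rx$, be multiplication by $r$; this is a definable endomorphism of $(I,+)$, landing in $I$ precisely because $I$ is an ideal. Then consider $H:=\mu_r^{-1}(I^{00})=\set{x\in I:rx\in I^{00}}$. As the preimage of a type-definable subgroup under a definable homomorphism, $H$ is a subgroup of $(I,+)$, type-definable over the small set consisting of the parameters of $I$, the parameters of $I^{00}$, and $r$. The key computation is that $H$ has bounded index in $I$: the map $x\mapsto\mu_r(x)+\bigl(\mu_r(I)\cap I^{00}\bigr)$ is a surjective homomorphism $I\to\mu_r(I)/\bigl(\mu_r(I)\cap I^{00}\bigr)$ with kernel exactly $H$, so that $[I:H]=[\mu_r(I):\mu_r(I)\cap I^{00}]\leq[I:I^{00}]$, which is bounded. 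By the minimality and parameter-independence of $I^{00}$ recalled above, $I^{00}\subseteq H$, i.e. $rI^{00}\subseteq I^{00}$, as desired.

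I do not expect a real obstacle: the whole argument is the standard observation that the preimage of a bounded-index type-definable subgroup under a definable group endomorphism is again a bounded-index type-definable subgroup, combined with the existence of $I^{00}$ in NIP theories. The one thing that has to be handled with a little care is the bookkeeping of parameters, namely that $I^{00}$ is absorbed not only into bounded-index subgroups type-definable over the parameters of $I$ but over any small parameter set, so that the extra parameter $r$ occurring in the definition of $H$ is harmless. (It is also worth remarking that integrality of $R$ plays no role in this particular statement — the same proof works verbatim for an arbitrary NIP ring — though it is of course compatible with the hypotheses as stated.)
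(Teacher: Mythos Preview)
Your argument is correct and is exactly the standard one: pull $I^{00}$ back along the definable endomorphism $\mu_r$, observe the preimage is type-definable of bounded index, and invoke minimality of $I^{00}$. The paper does not reproduce a proof at all but simply cites Johnson's argument, which proceeds in the same way; your remark that integrality is irrelevant here is also accurate.
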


\begin{proposition}
Let $R$ be a dp-minimal domain with maximal ideal $\M$ and fraction field $K$. Then

\begin{enumerate}
    \item $\M^{00}:\M^{00}$ is a valuation overring of $R$;
    \item $\sqrt{\M^{00}} = \set{a\in \M\mid \M/\vect{a}\text{ is infinite}}$;
    \item For every $a\in R$ with $\M/\vect{a}$ is finite, $\sqrt{\M^{00}} = P_a$.
    \end{enumerate} 
    In particular, exactly one of the following holds:
    \begin{itemize}
        \item $\M/\vect{a}$ is infinite for all $a\in \M$ and $\sqrt{\M^{00}}=\M$; 
        \item $\sqrt{\M^{00}}=P_a$ for any $a\in \M$ with $\M/\vect{a}$ finite.
    \end{itemize}

\end{proposition}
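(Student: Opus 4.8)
The plan is to prove item (2) first — by identifying the prime ideal $\sqrt{\M^{00}}$ with $S:=\set{a\in\M\mid \M/\vect{a}\text{ infinite}}$ — and to read off (1), (3) and the dichotomy from it. I work in a monster model $R$, so ``infinite'' means of full cardinality and a definable subgroup of $\M$ of infinite index has unbounded index; I assume $R$ is not a field, so $R$ is local with $\M$ infinite, and (since the valuation‑ring case is covered by the classification obtained earlier) I assume $R$ is not a valuation ring, so that $R/\M$ is finite by Lemma~\ref{L:eran}(1). Two preliminaries: $\M^{00}$ is an ideal of $R$ by the cited Fact, and since $\M^{00}\subseteq\M=\sqrt{\M}$ the ideal $\sqrt{\M^{00}}$ is a proper radical ideal, hence prime by Corollary~\ref{C:prime-ideals-inp-minimal}. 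I will also use the algebraic identity $\bigcap_n\vect{a^n}=P_a$, valid for every $a\in\M$: one has $P_a\subseteq\vect{a^n}$ because $a^n\notin P_a$ and, as $R$ is divided, $P_a$ is comparable with every ideal (Remark~\ref{R:divided}); conversely if $y\in\bigcap_n\vect{a^n}\setminus P_a$ then $P_a\subsetneq\vect{y}$ forces $a\in\sqrt{\vect{y}}$ (as $P_a$ is the largest prime omitting $a$), so $a^k\in\vect{y}\subseteq\vect{a^{k+1}}$, making $a$ a unit.

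For (2) I would first show $S\subseteq\sqrt{\M^{00}}$, which needs no further hypothesis: if $c\in S\setminus\sqrt{\M^{00}}$, then $c$ lies outside the prime $\sqrt{\M^{00}}$, so by dividedness $\sqrt{\M^{00}}\subseteq\vect{c}$, hence $\M^{00}\subseteq\vect{c}\subseteq\M$; but $\M/\vect{c}$ is infinite, hence of unbounded index, so $[\M:\M^{00}]\geq[\M:\vect{c}]$ is unbounded, contradicting that $\M^{00}$ has bounded index. For the reverse inclusion $\sqrt{\M^{00}}\subseteq S$ I use finiteness of $R/\M$: if $\M/\vect{a}$ is finite then $\M/\vect{a^n}$ is finite for all $n$ (from $\vect{a^k}/\vect{a^{k+1}}\cong R/\vect{a}$, an isomorphism of groups since $R$ is a domain), so each $\vect{a^n}$ is a definable finite‑index — hence bounded‑index — subgroup of $\M$, whence $\M^{00}\subseteq\bigcap_n\vect{a^n}=P_a$; therefore $\sqrt{\M^{00}}\subseteq P_a$, and since $a\notin P_a$ we get $a\notin\sqrt{\M^{00}}$. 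Contrapositively, $a\in\sqrt{\M^{00}}$ forces $\M/\vect{a}$ infinite, i.e.\ $a\in S$. This gives $\sqrt{\M^{00}}=S$, which is (2).

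Hence $S$ is a prime ideal, and it remains to pin it down. If some $a_0\in\M$ has $\M/\vect{a_0}$ finite, then $a_0\notin S$, so $S\subseteq P_{a_0}$; conversely any $b\in P_{a_0}$ satisfies $a_0^n\notin\vect{b}$ for all $n$ (as $\vect{b}\subseteq P_{a_0}$), so the powers $a_0,a_0^2,\dots$ are pairwise incongruent modulo $\vect{b}$ (if $a_0^n\equiv a_0^m$ then $a_0^n(1-a_0^{m-n})\in\vect{b}$ with $1-a_0^{m-n}$ a unit), whence $\M/\vect{b}$ is infinite and $b\in S$; so $S=P_{a_0}$, independently of the choice of $a_0$. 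If no such $a_0$ exists then $S=\M$. These two mutually exclusive alternatives are exactly the ``in particular'' statement, and in the first case $\sqrt{\M^{00}}=S=P_{a_0}$ for every $a_0$ with $\M/\vect{a_0}$ finite — this is (3).

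Finally, for (1), put $\p=\sqrt{\M^{00}}$. I would first show $s\M^{00}=\M^{00}$ for every $s\notin\p$: for $s$ a unit this is trivial, and for $s\in\M\setminus\p$ the quotient $\M/\vect{s}$ is finite, so $s\M$ is a finite‑index definable subgroup of $\M$, hence $(s\M)^{00}=\M^{00}$; since $x\mapsto sx$ is a definable isomorphism of $\M$ onto $s\M$ carrying $\M^{00}$ onto $(s\M)^{00}$, we get $s\M^{00}=\M^{00}$. Consequently $R_\p\subseteq\M^{00}:\M^{00}$, since for $x=r/s$ with $s\notin\p$ one has $x\M^{00}=r\,s^{-1}\M^{00}=r\M^{00}\subseteq\M^{00}$. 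In the case $S=P_{a_0}$ the prime $\p$ is non‑maximal (it omits $a_0\in\M$), so $R_\p$ is a valuation ring by Proposition~\ref{P:local-is-val}; then for any $x\in K$ either $x\in R_\p\subseteq\M^{00}:\M^{00}$, or $x\notin R_\p$ and $x^{-1}\in\p R_\p=\p\subseteq R$, so $x^{-1}\M^{00}\subseteq\M^{00}$ — hence $\M^{00}:\M^{00}$ is a valuation overring of $R$ (in fact $=R_\p$, as $x\in\M^{00}:\M^{00}$ also forces $v_\p(x)\geq0$ via $x^nm\in\M^{00}\subseteq R$). In the remaining case $\p=\M$ one checks $\M^{00}=\M$ (a proper sub‑ideal of $\M$ lies inside some $\vect{a}$ with $a\in\M$, which has unbounded index), so $\M^{00}:\M^{00}=\M:\M$, and this is a valuation overring of $R$ by the divided structure (cf.\ the earlier computation $\M^{-1}=\M:\M$). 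The step I expect to be the main obstacle is the cardinality bookkeeping — systematically distinguishing \emph{bounded} from \emph{genuinely infinite} index of definable subgroups of $\M$, and in particular guaranteeing that the whole chain $\vect{a}\supseteq\vect{a^2}\supseteq\cdots$ is of finite index, which is precisely where finiteness of $R/\M$ must enter.
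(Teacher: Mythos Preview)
Your arguments for (2), (3), and the dichotomy are correct and close in spirit to the paper's, the main difference being that you lean on dividedness (Theorem~\ref{T:dp-is-divided} and the identity $P_a=\bigcap_n\vect{a^n}$) where the paper invokes the type-definable analogue of Fact~\ref{F:almost-contain,inp} (namely \cite[Proposition 3.12]{chain-conditions}). Both routes work.

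The gap is in your treatment of (1), specifically the case $\p=\sqrt{\M^{00}}=\M$. Your parenthetical justification --- ``a proper sub-ideal of $\M$ lies inside some $\vect{a}$ with $a\in\M$'' --- is false: in a non-valuation domain ideals need not be comparable. Concretely, take $R=\F_p+\F_p t+\{v\geq 2\}$ inside $\F_p^{alg}((t^{\Q}))$ (this is Example~\ref{E:equip}'s variant). Here $\M=\F_p t+\{v\geq 2\}$, and one checks $\M/\vect{a}$ is infinite for every $a\in\M$, so $\sqrt{\M^{00}}=\M$ by your (2). But $\{v\geq 2\}$ is a definable ideal of finite index $p$ in $\M$, hence $\M^{00}\subseteq\{v\geq 2\}\subsetneq\M$; in fact $\M^{00}=\{v\geq 2\}$, which is not prime (since $t\cdot t\in\{v\geq 2\}$ while $t\notin\{v\geq 2\}$). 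This is exactly what the Remark following the Proposition records: $\M=\M^{00}$ requires $\M^{00}$ to be prime, not just $\sqrt{\M^{00}}=\M$. Your subsequent appeal to ``$\M:\M$ is a valuation ring'' is also unsupported --- the cited computation $\M^{-1}=\M:\M$ is only an equality, not a valuation claim, and the place where the paper does show $\M^{-1}$ is a valuation ring (Proposition~\ref{P:nipp}(4)) assumes the stronger hypothesis that $\M$ has no immediate prime predecessor.

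The paper avoids this case split entirely: for any $a\in K$ it compares the type-definable subgroups $a\M^{00}$ and $\M^{00}$ via \cite[Proposition 3.12]{chain-conditions}, obtaining that one of $a\M^{00}/(\M^{00}\cap a\M^{00})$, $\M^{00}/(\M^{00}\cap a\M^{00})$ is finite; since $\M^{00}$ admits no proper type-definable subgroup of finite index, this forces $\M^{00}\subseteq a\M^{00}$ or $a\M^{00}\subseteq\M^{00}$ outright. Your nice observation that $s\M^{00}=\M^{00}$ for $s\notin\p$ is a shadow of this, but the direct argument works uniformly.
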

\begin{remark}
It follows that $\M=\M^{00}$ if and only if $\M^{00}$ is prime and $\M/\vect{a}$ is infinite for all $a\in \M$.
\end{remark}
\begin{proof}
\textit{(1)} It is routine to show that $\M^{00}:\M^{00}$ is a ring, so it is sufficient to prove that it is a valuation ring, i.e. for all $a\in K$, $a\M^{00}\subseteq \M^{00}$ or $a^{-1}\M^{00}\subseteq \M^{00}$. Let $a\in K$. By \cite[Proposition 3.12]{chain-conditions}, $a\M^{00}/(\M^{00}\cap a\M^{00})$ is finite or $\M^{00}/(\M^{00}\cap a\M^{00})$. By multiplying by $a^{-1}$ we may assume without loss of generality that it is the latter. Since $\M^{00}$ has no finite index type-definable subgroups, $\M^{00}=\M^{00}\cap a\M^{00}$, i.e. $a^{-1}\M^{00}\subseteq \M^{00}$.

\textit{(2)} We show that $a\in \M\setminus \sqrt{\M^{00}}$ if and only if $R/\vect{a}$ is finite. Let $a\notin \sqrt{\M^{00}}$. By \cite[Proposition 3.12]{chain-conditions}, $\vect{a}/(\M^{00}\cap \vect{a})$ is finite or $\M^{00}/(\M^{00}\cap \vect{a})$ is finite. If it was the former, we would have that $a^n\in \M^{00}$ for some $n$, i.e. $a\in\sqrt{\M^{00}}$, a contradiction. Consequently, $\M^{00}/(\M^{00}\cap \vect{a})$ is finite and by the definition of $\M^{00}$, $\M^{00}\subseteq \vect{a}$ hence $R/\vect{a}$ is finite. 

Conversely, if $R/\vect{a}$ is finite, then as \[\abs{R/a\M^{00}} = \abs{R/\vect{a}}\abs{\vect{a}/a\M^{00}}= \abs{R/\vect{a}}\abs{R/\M^{00}},\] it follows that $\M^{00} \subseteq a\M^{00}$. As $\M^{00}$ is an ideal we have $\M^{00} = a\M^{00}$. Assume that $a^n\in \M^{00}$ with $n$ is minimal. Thus $a^n=am$ for some $m\in \M^{00}$, but $R$ is a domain so $a^{n-1}\in \M^{00}$, contradiction.

\textit{(3)} Assume that $\M/\vect{a}$ is finite. As $R$ is divided, $P_a\subseteq \vect{a}$. In fact, it follows from the definition that $P_a = \bigcap_n \vect{a^n}$, hence $P_a$ is of bounded index in $\M$ (since each $\vect{a^n}$ is of finite index in $\M$). As a result, $\M^{00}\subseteq P_a$ and hence $\sqrt{\M^{00}}\subseteq P_a$. For the other inclusion, if $b\in P_a$ then $\M/\vect{b}$ is infinite by Remark \ref{R:prime-inft-index} and so, by \textit{(2)}, $b\in \sqrt{\M^{00}}$. 
\end{proof}

We end with the following.

\begin{definition}[\cite{HH78}]
An ideal $\p$ of a domain $R$ is \emph{strongly prime} if for all $x,y\in \Frac(R)$, $xy\in \p$ implies $x\in \p$ or $y\in \p$. A domain is a \emph{pseudo-valuation domain} if every prime ideal is strongly prime.
\end{definition}

Every valuation ring is a pseudo-valuation domain, every pseudo-valuation domain is divided, and a local ring $(R,\M)$ is a pseudo-valuation domain if and only if $\M$ is strongly prime \cite[Theorem 1.4]{HH78}.

\begin{proposition}\label{P:strongly_prime}
Let $R$ be a dp-minimal domain, then every non-maximal prime ideal is strongly prime. 
\end{proposition}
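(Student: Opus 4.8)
The plan is to reduce the statement to the fact that prime ideals of valuation rings are strongly prime, using the two main results already established. Fix a non-maximal prime ideal $\p$ and set $K=\Frac(R)$; the goal is to show that whenever $x,y\in K$ satisfy $xy\in\p$, then $x\in\p$ or $y\in\p$.

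First I would invoke Proposition \ref{P:local-is-val} to get that $R_\p$ is a valuation ring, noting that $\Frac(R_\p)=K$ since $R\subseteq R_\p\subseteq K$. Next I would use that every valuation ring is a pseudo-valuation domain (recalled just after \cite[Theorem 1.4]{HH78}), so that every prime ideal of $R_\p$ is strongly prime; in particular, for $x,y\in K$ with $xy\in\p R_\p$ one has $x\in\p R_\p$ or $y\in\p R_\p$. If one prefers to avoid the citation, this is a two-line argument: given $x,y\in K$ with $xy$ in a prime ideal $\q$ of $R_\p$, we may assume $y\neq 0$, and then either $x/y\in R_\p$, in which case $x^2=(x/y)(xy)\in\q$, or $y/x\in R_\p$, in which case $y^2=(y/x)(xy)\in\q$; since $\q$ is radical this gives $x\in\q$ or $y\in\q$.

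Finally I would invoke Theorem \ref{T:dp-is-divided}: since $R$ is divided, $\p R_\p=\p$ as subsets of $K$, so the implication of the previous step is verbatim the assertion that $\p$ is strongly prime as an ideal of $R$. I do not expect any genuine obstacle here: the whole content is carried by Proposition \ref{P:local-is-val} (which produces the valuation overring $R_\p$) and by Theorem \ref{T:dp-is-divided} (which identifies $\p$ with its extension $\p R_\p$). The one point worth flagging is that non-maximality of $\p$ is used in an essential way through Proposition \ref{P:local-is-val}: one cannot in general replace $R_\p$ by $R$ itself here, since a dp-minimal domain need not be a valuation ring, and correspondingly its maximal ideal need not be strongly prime.
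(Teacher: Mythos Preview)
Your argument is correct and follows exactly the same route as the paper: apply Proposition~\ref{P:local-is-val} to see that $R_\p$ is a valuation ring (hence a pseudo-valuation domain, so $\p R_\p$ is strongly prime), and then use Theorem~\ref{T:dp-is-divided} to conclude $\p=\p R_\p$. The direct verification that prime ideals of valuation rings are strongly prime and the remark on the essential use of non-maximality are nice additions, but the overall strategy is identical.
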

\begin{proof}
By Proposition \ref{P:local-is-val}, if $\p$ is a non-maximal prime ideal of $R$, $R_\p$ is a valuation ring, so in particular it is a pseudo-valuation domain and so $\p R_\p$ is strongly prime. As $R$ is divided, by Theorem \ref{T:dp-is-divided}, $\p = \p R_\p$, so $\p$ is strongly prime.
\end{proof}

\begin{remark}
Dp-minimal domains are not far from being pseudo-valuation domains. However, it is not hard to check that $\F_p+\F_p t +\set{x\mid v(x)\geq 2}$ is a dp-minimal subring of the valued field $(\F_p^{alg}((t^\Q)),v)$, where $v$ is the natural valuation, which is not a pseudo-valuation domain. See Example \ref{E:equip}, for an explanation why the valued field is dp-minimal.
Proposition~\ref{P:nipp} gives a sufficient condition for a dp-minimal domain to be a pseudo-valuation domain.
\end{remark}

\section{Valuation Rings}\label{S:val-rings}
It is well known (and details will be given below) that every dp-minimal valued field of equicharacteristic has an infinite residue field  and that every dp-minimal valued field of mixed characteristic with finite residue field is of finite ramification. We will show below that these are the only obstructions for a dp-minimal ring to be a valuation ring.

Let $R$ be an integral domain with fraction field $K$. $R$ is \emph{root-closed} if for all $q\in K$ such that $q^n\in R$ for some $n\geq 1$ then $q\in R$. It is standard that valuation rings are integrally closed and hence root-closed.

  \begin{theorem}\label{T:equi}
    Let $R$ be an infinite dp-minimal integral domain. 
    \begin{enumerate}
        \item Assume that $R$ is of equicharacteristic $p\geq 0$. The following are equivalent.
         \begin{enumerate}
      \item $R$ is a henselian valuation ring;
      \item $R$ is integrally closed;
      \item $R$ is root-closed;
      \item $R$ has an infinite residue field;
      \item $R$ has an infinite subring which is a field (necessarily $\Q$ or $\F_p^{alg}$);
    \end{enumerate}
    \item Assume that $R$ is of mixed characteristic $(0,p)$ with maximal ideal $\M$. The following are equivalent.
    \begin{enumerate}
    \item[(i)] $R$ is a henselian valuation ring;
    \item[(ii)] Either $R$ has an infinite residue field or $\M$ is principal (and the residue field is finite).
    \end{enumerate}
    \end{enumerate}
          \end{theorem}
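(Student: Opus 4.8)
For part~(1) the plan is to run the cycle $(a)\Rightarrow(b)\Rightarrow(c)\Rightarrow(e)\Rightarrow(d)\Rightarrow(a)$, where everything except $(c)\Rightarrow(e)$ is immediate from what is already proved: $(a)\Rightarrow(b)$ because valuation rings are integrally closed; $(b)\Rightarrow(c)$ trivially; $(e)\Rightarrow(d)$ because an infinite subfield $k\subseteq R$ has $k\cap\M=(0)$ (it is a field and $1\notin\M$) and hence embeds into $R/\M$; $(d)\Rightarrow(a)$ by Lemma~\ref{L:eran}(1) together with Corollary~\ref{C:dp-min,pNIP-henselian}; and (redundantly but directly) $(e)\Rightarrow(a)$ by Lemma~\ref{L:eran}(2) with Corollary~\ref{C:dp-min,pNIP-henselian}. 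So the whole of part~(1) reduces to $(c)\Rightarrow(e)$, which I would treat according to the characteristic.

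In equicharacteristic $0$ there is nothing to do, and in fact all five conditions hold for any infinite dp-minimal domain of equicharacteristic $0$: the residue field has characteristic $0$, hence is infinite, so $(d)$ and thus $(a)$ hold; and $\Q\subseteq R$ because every rational prime $\ell$ is nonzero in $R/\M$, hence not in $\M$, hence a unit of the local ring $R$, so $(e)$ holds with subfield $\Q$.

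The substance is the equicharacteristic $p>0$ case of $(c)\Rightarrow(e)$, and here the plan is short once the right external fact is isolated. Since $K:=\Frac(R)$ is interpretable in $R$, it is an infinite dp-minimal field of characteristic $p$; by Johnson's analysis of dp-minimal fields — using in particular that a dp-minimal valued field of equicharacteristic has infinite residue field (recalled at the start of this section), which prevents the canonical valuation of $K$ from having a finite residue field — every such field contains $\F_p^{alg}$. Root-closedness then pulls $\F_p^{alg}$ into $R$: a nonzero $\alpha\in\F_p^{alg}$ has finite multiplicative order $m$, so $\alpha^m=1\in R$ and therefore $\alpha\in R$. Thus $\F_p^{alg}\subseteq R$, which is $(e)$ (and is the copy of $\F_p^{alg}$ referred to in the parenthetical). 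This closes the cycle, so $(a)$ through $(e)$ are equivalent; one can alternatively see $(d)\Rightarrow(e)$ directly by Hensel-lifting $\F_p^{alg}$ from the infinite dp-minimal residue field, $R$ being henselian by Corollary~\ref{C:dp-min,pNIP-henselian}. I expect the \textbf{main obstacle} not to be ring theory but the careful bookkeeping of the model-theoretic inputs: checking, via Proposition~\ref{P:localization-preserves-dp} and Shelah expansions, that the valued field and residue field structures one passes to are genuinely dp-minimal, and extracting from Johnson's classification the precise statement that a dp-minimal field of characteristic $p$ contains $\F_p^{alg}$.

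For part~(2): $(ii)\Rightarrow(i)$ splits into the two alternatives of $(ii)$. If $R/\M$ is infinite, Lemma~\ref{L:eran}(1) makes $R$ a valuation ring and Corollary~\ref{C:dp-min,pNIP-henselian} makes it henselian; if $R/\M$ is finite and $\M$ is principal, this is exactly the ``in particular'' clause of Corollary~\ref{C:dp-min_val_divided_prime} together with Corollary~\ref{C:dp-min,pNIP-henselian}. For $(i)\Rightarrow(ii)$, let $R$ be a henselian valuation ring of mixed characteristic $(0,p)$; if $R/\M$ is infinite we are in the first alternative, and if $R/\M$ is finite then, by the standard fact recalled at the start of this section that a dp-minimal valued field of mixed characteristic with finite residue field has finite ramification, the value group of $R$ has a least positive element, i.e.\ $\M$ is principal, which is the second alternative. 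I expect part~(2) to be routine given these inputs, the only real model theory again being hidden inside the cited facts about dp-minimal valued fields.
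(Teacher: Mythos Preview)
Your proof is correct and follows essentially the same approach as the paper: the only substantive step---pulling $\F_p^{alg}$ from $K$ into $R$ via root-closedness in equicharacteristic $p$---is identical, and part~(2) matches exactly. Two minor remarks: the fact that an infinite dp-minimal field of characteristic $p$ contains $\F_p^{alg}$ is due to Kaplan--Scanlon--Wagner \cite[Corollary~4.5]{KSW11} (via Artin--Schreier closedness), not Johnson, and needs none of the detour through the canonical valuation you sketch; and your direct passage $(e)\Rightarrow(d)$ by embedding the subfield into the residue field is slightly cleaner than the paper's route, which first concludes $R$ is a valuation ring via Lemma~\ref{L:eran} and then invokes \cite[Proposition~5.3]{KSW11} to obtain the infinite residue field.
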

  \begin{proof}
\textit{(a)} implies \textit{(b)} is standard and \textit{(b)} implies \textit{(c)} is clear. 

\textit{(c)} implies \textit{(d)} is clear in equicharacteristic $0$. Assume that $R$ is of equicharacteristic $p>0$. By Proposition \ref{P:localization-preserves-dp}, the fraction field $K = \Frac(R)$ is dp-minimal and since it is infinite (as $R$ is infinite), by \cite[Corollary 4.5]{KSW11}, $\F_p^{alg}\subseteq K$. As every element in $\F_p^{alg}$ is an $n$-th root of an element of $\F_p\subseteq R$ for some $n$, it follows that $\F_p^{alg}\subseteq R$. By Lemma~\ref{L:eran}, $R$ is a valuation ring. It follows from \cite[Proposition 5.3]{KSW11} that the residue field is infinite. 

\textit{(d)} implies \textit{(a)} is Lemma~\ref{L:eran}.

\textit{(a)} implies \textit{(e)}. Every local ring of equicharacteristic $0$ contains $\Q$. In equi-characteristic $p>0$, this is contained in the proof of \textit{(c)} implies \textit{(d)}.

\textit{(e)} implies \textit{(a)} is Lemma~\ref{L:eran}.

\textit{(i)} implies \textit{(ii)}. By Proposition \ref{P:localization-preserves-dp}, $(K,R)$ is dp-minimal, where $K=\Frac(R)$. Let $v$ be the valuation $R$ induces on $K$.  If the residue field is finite, by \cite[Theorem 4.3.1]{johnson}, $[0,v(p)]$ is finite. In particular, $\M$ is principal.

\textit{(ii)} implies \textit{(i)}. If $R$ has an infinite residue field then $R$ is a valuation ring by Lemma \ref{L:eran}. If $R$ has a finite residue field and $\M$ is principal then $R$ is a valuation ring by Corollary \ref{C:dp-min_val_divided_prime}.\end{proof}
  
 
 In equicharacteristic $0$, the condition $(d)$ of Theorem~\ref{T:equi} is vacuously true, hence every dp-minimal integral domain of equicharacteristic $0$ is a valuation ring. However there are dp-minimal domains of equicharacteristic $p>0$ which are not valuation rings. 
 
 \begin{example}\label{E:equip}
Consider $K=\F_p^{alg}((t^{\mathbb{Q}}))$, the Hahn series over $\F_p^{alg}$ with value group $\mathbb{Q}$, together with the natural valuation $v$. It is a dp-minimal valued field by e.g. \cite[Theorem 9.8.1]{johnson}. Consider $\F_p+\set{x\mid v(x)\geq 1}$, it is a definable subring of $K$ and hence dp-minimal. It is of equicharacteristic $p>0$ with finite residue field hence by Theorem~\ref{T:equi} it is not a valuation ring. To see this directly, the ideals $\F_p t+\set{x\in K\mid v(x)\geq 2}$ and $\set{x\in K\mid v(x)>1}$ are incomparable. Note that the same argument gives that any ring of the form $\F_p+I$ is not a valuation ring, for any ideal $I$ of the valuation ring of $\F_p^{alg}((t^{\mathbb{Q}}))$.
\end{example}



\begin{example}\label{E:mixed}
Let $(\mathbb{Q}_p,v)$ be the p-adics numbers for $p\neq 2$, it is dp-minimal by \cite[Theorem 6.13]{dpminpadic}, and let $K:=\mathbb{Q}_p(\sqrt{p})$ be the totally ramified finite extension given by adjoining the square root of $p$, it is also dp-minimal (together with the valuation $v$). 
The ring $R:=\{0,\dots,p-1\}+\{x\in K: v(x)\geq 1\}$ is definable and hence dp-minimal. Note that the maximal ideal is not principal, $\vect{p}$ is not the maximal ideal since it does not contain $p\sqrt{p}$. In fact, the maximal ideal is generated by $p$ and $p\sqrt{p}$. This shows that it is not a valuation ring and that for dp-minimal rings, $R/pR$ finite does not imply a principal maximal ideal.

\end{example}

\begin{remark}
  Let $\OO_p$ be the valuation ring of the valued field $\F_p^{alg}((t^\Gamma))$, for some divisible ordered abelian group $\Gamma$. Let $I_p$ any ideal of $\OO_p$, then $R_p := \F_p +I_p$ is not a valuation ring (see Example~\ref{E:equip}). Let $\U$ be a non-principal ultrafilter on the set of prime numbers. The ultraproduct $\prod_\U \OO_p$ is dp-minimal since it is the valuation ring of an algebraically closed valued field, however $\prod_\U R_p$ is not even inp-minimal. Indeed, it is not a valuation ring (as none of the $R_p$ are), but it has a pseudo-finite --hence infinite-- residue field. If $\prod_\U R_p$ were inp-minimal, this would contradict Lemma \ref{L:eran}.
  \end{remark}

\section{Externally Definable Valuation Rings}
It is known that every sufficiently saturated non-algebraically closed dp-minimal field admits an externally definable valuation, \cite[Theorem 1.5]{johnson-dp-minimal}. In this section, given a dp-minimal domain, we describe the interactions between $R$ and externally definable valuation rings of $\Frac(R)$.

\begin{lemma}\label{L:RoverOfinite}
Let $R$ be a local domain and $\OO$ a valuation ring of $\Frac(R)$. Then $R/(\OO\cap R)$ is finite if and only if $R\subseteq \OO$.
\end{lemma}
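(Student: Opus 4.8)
### Proof proposal for Lemma \ref{L:RoverOfinite}

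The plan is to prove both implications directly, the interesting direction being that finiteness of $R/(\OO\cap R)$ forces $R\subseteq\OO$. First consider the easy direction: if $R\subseteq\OO$ then $\OO\cap R=R$, so $R/(\OO\cap R)$ is the zero ring, which is finite (indeed trivial). This requires no hypothesis beyond $R\subseteq\OO$.

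For the converse, suppose $R\not\subseteq\OO$ and aim for a contradiction with finiteness of $R/(\OO\cap R)$. Pick $x\in R\setminus\OO$. Since $\OO$ is a valuation ring of $K=\Frac(R)$, we have $x^{-1}\in\OO$, and in fact $x^{-1}$ lies in the maximal ideal $\m_\OO$ of $\OO$ because $x\notin\OO$. The key observation is then that the powers $x,x^2,x^3,\dots$ all lie in $R$ (as $R$ is a ring containing $x$) but pairwise lie in distinct cosets of $\OO\cap R$: if $x^n-x^m\in\OO$ with $n>m$, then dividing by $x^m$ gives $x^{n-m}-1\in\OO$ (valid since $x^{-m}\in\OO$), whence $x^{n-m}\in\OO$, contradicting $v(x)<0$ (here $v$ is the valuation attached to $\OO$, so $v(x^{n-m})=(n-m)v(x)<0$). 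Hence $\{x^n : n\in\N\}$ maps to infinitely many distinct elements of $R/(\OO\cap R)$, so that quotient is infinite — contradiction. (One does not even use that $R$ is local here; the statement is a bit more general, but stating it for local $R$ is harmless.)

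I do not anticipate any real obstacle: the argument is a one-line coset computation once one exploits that a valuation ring is closed under inverses of its non-elements and that $v(x)<0$ prevents any positive power of $x$ from re-entering $\OO$. The only point to be mildly careful about is ensuring $x^{-m}\in\OO$ so that the cancellation $x^n-x^m\in\OO\Rightarrow x^{n-m}-1\in\OO$ is legitimate, which is immediate from $x\notin\OO$. One could alternatively phrase the whole thing valuation-theoretically: $v(x^n)=n\,v(x)\to-\infty$, so the $x^n$ occupy infinitely many distinct values and a fortiori infinitely many cosets of $\OO\cap R\supseteq\{r\in R: v(r)\ge 0\}$; but the coset computation above is self-contained and slightly cleaner.
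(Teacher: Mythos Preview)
Your proof is correct and takes a genuinely different, simpler route from the paper. The paper argues by contradiction on the finite index: assuming $|R/(\OO\cap R)|=n\ge 2$, it chooses coset representatives $a_1,\dots,a_{n-1}\in R\setminus\OO$ and then splits into cases according to whether $a_1\in R^\times$ (this is where the local hypothesis is used, to ensure $1+a_1\in R^\times$ in the non-unit case). In each case a suitable multiplication or translation by $1$ forces two distinct cosets to land inside $\OO\cap R$, contradicting that the index was exactly $n$. Your argument instead observes that a single $x\in R\setminus\OO$ has $v(x)<0$, so its powers $x,x^2,x^3,\dots$ lie in $R$ and have strictly decreasing values, hence represent infinitely many cosets of $\OO\cap R$; the verification that $x^n-x^m\notin\OO$ via multiplication by $x^{-m}\in\OO$ is clean and legitimate. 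As you correctly note, this does not use that $R$ is local at all, so your proof in fact establishes the lemma for any subring $R\subseteq K$ and any valuation ring $\OO$ of $K$. Your approach is both shorter and more general; the paper's approach, by contrast, gives finer information about how cosets behave under the ring operations but at the cost of a case analysis and the extra hypothesis.
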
 
\begin{proof}
It is clear that $R/(\OO\cap R)$ is finite if $R\subseteq \OO$. Let $\M$ be the maximal ideal of $R$ and $\m$ the maximal ideal of $\OO$. Assume, towards a contradiction, that $|R/(\OO\cap R)|=n\geq 2$. There exists some  $a_1,\dots,a_{n-1}\in R\setminus \OO$ (so $a_i^{-1}\in \m)$ such that \[R = (\OO\cap R)\cup \bigcup_{i=1}^{n-1} ((\OO\cap R)+a_{i})\text{   $(\star)$}.\]

If $a_1\in R^\times$, then $a_1^{-1}\in R\cap \OO$ and by multiplying $(\star)$ by $a_1^{-1}$ we get:
\[a_1^{-1}R = R = a_1^{-1}(\OO\cap R)\cup (a_1^{-1}(\OO\cap R) + 1)\cup \bigcup_{i=2}^{n-1}(a_1^{-1}(\OO\cap R) +a_1^{-1}a_i).\]
Since $a_1^{-1}\OO\subseteq \OO$ and $(a_1^{-1}\OO + 1)\subseteq \OO$, this implies that 
\[R= (\OO\cap R)\cup \bigcup_{i=2}^{n-1}((\OO\cap R) +a_1^{-1}a_i).\]
contradicting that the index is $n$,

If $a_1\notin R^{\times}$ then $a_1\in \M$ hence $1+a_1,(1+a_1)^{-1}\in R^{\times}$. Now translating $(\star)$ by $1$, we get 
\[R+1 = R =((\OO\cap R)+1)\cup \bigcup_{i=1}^{n-1}(\OO\cap R)+1+ a_i) \text{   $(\star')$}.\]
As $\OO$ is a valuation ring, $1+a_1\in \OO$ or $(1+a_1)^{-1}\in \OO$. If it is the former then $((\OO\cap R)+1)\cup((\OO\cap R)+1+ a_1)\subseteq \OO\cap R$ and, as before, we get a contradiction to the index being $n$. If $(1+a_1)^{-1}\in \OO$ then we get a contradiction after multiplying $(\star')$ by $(1+a_1)^{-1}$, as before. As a result, $R = \OO\cap R$, i.e. $R\subseteq \OO$.
\end{proof}

\begin{lemma}\label{L:OoverRfinite}
Let $R$ be a local domain and $\OO$ a valuation ring of $\Frac(R)$. If $\OO/(R\cap \OO)$ is finite then $R[\OO]$ (the ring generated by $R$ and $\mathcal{O}$) enjoys the following properties
\begin{enumerate}
    \item it is a valuation ring;
    \item it is the integral closure of $R$;
    \item it is a finite extension of $R$ and in particular, it is definable in the structure $(\Frac(R),R)$.
\end{enumerate}
\end{lemma}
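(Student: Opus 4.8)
Write $\OO' := R[\OO]$ for the subring of $K = \Frac(R)$ generated by $R$ and $\OO$. Since $\OO/(R\cap\OO)$ is finite, pick coset representatives $u_1,\dots,u_m \in \OO$, so that $\OO = \bigcup_i ((R\cap\OO) + u_i)$. The plan is to establish the three claims in the order (3) $\Rightarrow$ (1) $\Rightarrow$ (2), since finiteness of the extension is the concrete handle that gives us integrality, and integrality plus the valuation structure of $\OO$ should force $\OO'$ to be a valuation ring.

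\textbf{Step 1: $\OO'$ is a finite $R$-module.} First I would observe that $\OO' = R\cdot\OO = \{\,r\theta : r\in R,\ \theta\in\OO\,\}$ is already closed under addition and multiplication: a sum $r_1\theta_1 + r_2\theta_2$ can be written with a common denominator from $R$, and here one uses that $\OO$ is a valuation ring so that $\theta_1,\theta_2$ are comparable, i.e. $\theta_1/\theta_2 \in \OO$ or $\theta_2/\theta_1\in\OO$, hence $\theta_1+\theta_2 \in \theta_2\OO \subseteq \OO$ (or symmetrically); thus $r_1\theta_1 + r_2\theta_2 \in R\cdot\OO$. Now for $\theta\in\OO$ write $\theta$ in terms of the $u_i$: there is $r\in R\cap\OO$ and an index $i$ with $\theta = r + u_i$, so every element of $\OO'$ lies in $R + \sum_i R u_i$. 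Hence $\OO' = R + Ru_1 + \dots + Ru_m$ is a finitely generated $R$-module. Since a module-finite ring extension inside $K$ is in particular ring-finite, $\OO'$ is interpretable — indeed definable — in $(K,R)$: the predicate $x\in\OO'$ becomes $\exists r_0,\dots,r_m\in R\ (x = r_0 + \sum_i r_i u_i)$. This gives (3).

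\textbf{Step 2: $\OO'$ is a valuation ring.} A ring $\OO'$ with $R \subseteq \OO' \subseteq K$ and $\Frac(\OO') = K$ is a valuation ring iff for every $x\in K^\times$, $x\in\OO'$ or $x^{-1}\in\OO'$. Given such $x$, since $\OO$ is a valuation ring we have $x\in\OO$ or $x^{-1}\in\OO$; in either case $x\in\OO\subseteq\OO'$ or $x^{-1}\in\OO\subseteq\OO'$, so we are done immediately. (So (1) in fact does not even need finiteness of the extension, only that $\OO\subseteq\OO'$ and $\OO$ is a valuation ring; but I would keep it after Step 1 since definability is what makes $\OO'$ a useful object.)

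\textbf{Step 3: $\OO'$ is the integral closure of $R$ in $K$.} One inclusion: $\OO'$ is module-finite over $R$ by Step 1, hence every element of $\OO'$ is integral over $R$, so $\OO'$ is contained in the integral closure $\tilde R$ of $R$ in $K$. Conversely, $\tilde R \subseteq \OO'$: the ring $\OO'$ is by Step 2 a valuation ring of $K$, hence integrally closed in $K$, and it contains $R$, so it contains $\tilde R$. Therefore $\OO' = \tilde R$, which is (2).

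\textbf{Main obstacle.} The only genuinely non-routine point is verifying that $R\cdot\OO$ is already a ring — i.e. closed under addition — without first knowing it is module-finite; this is where the hypothesis that $\OO$ is a \emph{valuation} ring (comparability of elements of $\OO$ under divisibility) is essential, and it is the step I would write out carefully. Everything else is bookkeeping: the passage from "$\OO/(R\cap\OO)$ finite" to an explicit finite generating set, and the standard facts that module-finite implies integral and that valuation rings are integrally closed.
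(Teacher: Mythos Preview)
Your overall strategy matches the paper's exactly: show $R[\OO]$ is a finite $R$-module (hence integral over $R$ and definable), note it is a valuation ring because it contains $\OO$, and conclude it equals the integral closure of $R$ because valuation rings are integrally closed. Steps~2 and~3 are fine and coincide with the paper's argument.

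There is, however, a genuine gap in your Step~1. Your argument that $R\cdot\OO=\{r\theta : r\in R,\ \theta\in\OO\}$ is closed under addition uses comparability of $\theta_1,\theta_2$ in $\OO$ to deduce $\theta_1+\theta_2\in\OO$ --- but that is automatic since $\OO$ is a ring, and it does not yield $r_1\theta_1+r_2\theta_2\in R\cdot\OO$. (Following your reasoning one is led back to the very statement being proved.) The repair is to compare the \emph{other} pair: since $\OO$ is a valuation ring of $\Frac(R)$, for nonzero $r_1,r_2\in R$ we have $r_1/r_2\in\OO$ or $r_2/r_1\in\OO$; say the former, then $r_1\theta_1+r_2\theta_2=r_2\bigl((r_1/r_2)\theta_1+\theta_2\bigr)\in r_2\OO\subseteq R\cdot\OO$.

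That said, the whole $R\cdot\OO$ detour --- which you flag as the ``main obstacle'' --- is unnecessary. The paper simply observes that the coset decomposition gives $\OO = (R\cap\OO) + \sum_i (R\cap\OO)u_i$ as an $(R\cap\OO)$-module, so $R[\OO]=R[u_1,\dots,u_m]$; and $R[u_1,\dots,u_m]=R+\sum_i Ru_i$ because each product $u_iu_j$ lies in $\OO\subseteq R+\sum_k Ru_k$. No separate verification that some set of simple products is additively closed is required.
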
 
\begin{proof}
 If $|\OO/(R\cap \OO)|=1$ then $R[\OO]=R$ is a valuation ring and there is nothing to show, so we may assume that the index is greater than $1$.  Let $R^\prime = \OO\cap R$ and let $a_1,\dots,a_n\in \OO$ be such that $\OO = R^\prime\cup\bigcup_{i=1}^n (R^\prime +a_i)$. Then $\OO =R^\prime[a_1,\dots,a_n]$, and $\OO$ is a finite $R^\prime$-module, in particular $R[\OO]=R[a_1,\dots,a_n]$ is a finite $R$-module. As a result, $R[\OO]$ is an integral extension of $R$ and $R[\OO]$ is definable in $(K,R)$. As $R[\OO]$ is a valuation ring (it contains the valuation ring $\OO$) it is integrally closed and hence it is the integral closure of $R$.
\end{proof}

For valuation rings, the following lemma is a straight application of the approximation theorem for incomparable valuation rings. By assuming inp-minimality, it also follows for integrally closed rings.

\begin{lemma}\label{L:subring_of_inp-min_fields}
Let $R_1$ and $R_2$ two integrally closed rings and $K = \Frac(R_1) = \Frac(R_2)$. If $(R_1,R_2)$ is inp-minimal, then $R_1\subseteq R_2$ or $R_2\subseteq R_1$.
\end{lemma}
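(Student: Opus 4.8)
The plan is to prove the contrapositive: supposing $R_1\not\subseteq R_2$ and $R_2\not\subseteq R_1$, I will exhibit an inp-pattern of depth $2$ in $(R_1,R_2)$, contradicting inp-minimality. Fix $a\in R_2\setminus R_1$ and $b\in R_1\setminus R_2$ (both nonzero, since $0\in R_1\cap R_2$). The pattern will only evaluate formulas at the elements $a^{n}$, $b^{m}$, and $a^{n}+b^{m}$, all of which lie in $R_1\cup R_2\cup(R_1+R_2)$, so there is no issue with the universe of the structure.

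The algebraic core is that the powers of $a$ lie in pairwise distinct additive cosets of $R_1$ (and dually for $b$ and $R_2$). Since $R_1$ is integrally closed in $K$, it is the intersection of the valuation rings of $K$ containing it, so from $a\notin R_1$ we obtain a valuation ring $V\supseteq R_1$ whose valuation $v$ satisfies $v(a)<0$. Then, for $n>m\geq 1$, writing $a^{n}-a^{m}=a^{m}(a^{n-m}-1)$ and using that $v(a^{n-m})=(n-m)v(a)<0=v(1)$,
\[v(a^{n}-a^{m})=m\,v(a)+v(a^{n-m}-1)=m\,v(a)+(n-m)v(a)=n\,v(a)<0,\]
so $a^{n}-a^{m}\notin V$, hence $a^{n}-a^{m}\notin R_1$. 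Symmetrically, choosing a valuation ring $W\supseteq R_2$ with $w(b)<0$, we get $b^{n}-b^{m}\notin R_2$ for all $n\neq m$.

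The pattern is then immediate. Take two rows: the first with formula $\varphi_1(x;y):=R_1(x-y)$ and parameters $(a^{n})_{n\geq 1}$, the second with $\varphi_2(x;y):=R_2(x-y)$ and parameters $(b^{m})_{m\geq 1}$. Each row is $2$-inconsistent: if $x-a^{n}$ and $x-a^{m}$ both lie in $R_1$, then so does $a^{n}-a^{m}$, impossible for $n\neq m$ by the previous paragraph; likewise for the second row. And every path is consistent: for $n,m\geq 1$, since $b^{m}\in R_1$ and $a^{n}\in R_2$, the point $x:=a^{n}+b^{m}$ witnesses $\varphi_1(x;a^{n})\wedge\varphi_2(x;b^{m})$, because $x-a^{n}=b^{m}\in R_1$ and $x-b^{m}=a^{n}\in R_2$. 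Hence $(R_1,R_2)$ has burden at least $2$, contradicting inp-minimality, and the lemma follows.

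I expect the only real obstacle to be the coset-separation step in the second paragraph: integral closedness is genuinely essential there — it fails for rings like $R_1=k[t^{2},t^{3}]$ with $a=t$ — and the description of the integral closure as an intersection of valuation overrings is the right tool. I will also note that this argument is uniform and uses no approximation theorem: taking $R_1,R_2$ to be valuation rings (each its own valuation overring) recovers the valuation-ring case directly.
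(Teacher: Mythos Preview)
Your proof is correct but takes a different route from the paper's. The paper invokes the black-box commensurability fact for definable subgroups of an inp-minimal group (Fact~\ref{F:almost-contain,inp}): one of $R_1/(R_1\cap R_2)$ or $R_2/(R_1\cap R_2)$ is finite, say the first; then $R_1$ is a finite $(R_1\cap R_2)$-module, hence integral over $R_1\cap R_2$, and since $R_1\cap R_2$ is itself integrally closed (being the intersection of two integrally closed subrings of $K$), one gets $R_1=R_1\cap R_2$. You instead build the inp-pattern by hand and use the valuation-theoretic description of the integral closure to separate the cosets $a^n+R_1$. The two arguments are essentially dual: the paper runs \emph{finite index $\Rightarrow$ integral $\Rightarrow$ contained}, while you run \emph{not contained $\Rightarrow$ a dominating valuation gives infinitely many cosets $\Rightarrow$ depth-$2$ pattern}. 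Your version is self-contained and makes the valuation mechanism visible; the paper's is a three-line application of a fact already on the shelf. (Your closing remark about avoiding the approximation theorem applies equally to the paper's proof; the approximation theorem is only mentioned there as motivation, not used.)
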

\begin{proof}
By inp-minimality, $R_1/(R_1\cap R_2)$ or $R_2/(R_1\cap R_2)$ is finite. Assume the former, then $R_1$ is an integral extension of $R_1\cap R_2$. As $R_1\cap R_2$ is again integrally closed, we must have that $R_1 = R_1\cap R_2$, i.e. $R_1\subseteq R_2$.
\end{proof}

\begin{definition}
Let $(R,\M)$ be a local ring and $(R^\prime,\M^\prime)$ a local overring of $(R,\M)$. We say that $R^\prime $ is a  \emph{dominant} extension of $R$  if $\M = R\cap \M^\prime$ (equivalently $\M\subseteq \M^\prime$), and \emph{non-dominant}, otherwise.
\end{definition}

\begin{proposition}\label{P:val_overring_localisation}
Let $(R,\M)$ be a dp-minimal domain $K = \Frac(R)$. 
\begin{enumerate}
    \item If $\OO$ is a local non-dominant overring of $R$ (not necessarily externally definable in $(K,R)$), then $\OO$ is a valuation ring and there exists a non-maximal prime ideal $\p$ of $R$ such that $\OO = R_\p$.
    \item If $R\subseteq \OO$ is a local dominant valuation overring such that $(R,\OO)$ is dp-minimal (e.g. if $\OO$ is externally definable), then for any non-maximal prime ideal $\p$ of $R$, $\OO\subseteq R_\p$. Furthermore, $\Spec(R)\setminus \set{\M}$ is an initial segment of $\Spec(\OO)\setminus \set{\m}$.
\end{enumerate}
\end{proposition}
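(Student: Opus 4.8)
The plan is to deduce both parts from classical valuation theory over $K=\Frac(R)$, using only that $R$ is divided (Theorem~\ref{T:dp-is-divided}), so that $\p R_\p=\p$ for every prime $\p$, and that $R_\p$ is a valuation ring whenever $\p$ is non-maximal (Proposition~\ref{P:local-is-val}). The two facts about valuation rings of $K$ that I would invoke are: (a) a valuation ring of $K$ is a maximal element, for the domination order, in the poset of local subrings of $K$; and (b) for valuation rings $V,W$ of $K$ one has $V\subseteq W$ if and only if the maximal ideal of $W$ is contained in the maximal ideal of $V$, and in that case the maximal ideal of $W$ is a prime ideal of $V$ and $W$ is its localization.

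For (1), I would let $\m$ be the maximal ideal of $\OO$ and set $\p:=\m\cap R$, a prime ideal of $R$. Non-dominance provides some $x\in\M\setminus\m$; since then $x\in\M\setminus\p$, the ideal $\p$ is properly contained in $\M$, i.e.\ non-maximal. Next I would check the inclusion $R_\p\subseteq\OO$ (any denominator lies in $R\setminus\p\subseteq R\setminus\m\subseteq\OO^\times$), note that $R_\p$ is a valuation ring with maximal ideal $\p R_\p=\p$, and verify that $\OO$ dominates $R_\p$: indeed $\p\subseteq\m$ and, conversely, every $y\in R_\p\setminus\p$ is a unit of $R_\p$, hence $y^{-1}\in R_\p\subseteq\OO$ and $y\notin\m$, so $\m\cap R_\p=\p$. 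Fact~(a) then forces $\OO=R_\p$, in particular $\OO$ is a valuation ring.

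For (2), fix a non-maximal prime $\p$ of $R$. Then $R_\p$ is a valuation ring of $K$ with maximal ideal $\p R_\p=\p$, while the maximal ideal $\m$ of $\OO$ contains $\M$ (dominance) and hence contains $\p\subseteq\M$. Fact~(b), applied with $W=R_\p$ and $V=\OO$, yields at once $\OO\subseteq R_\p$. For the last assertion, fact~(b) also tells us that $\p$ (the maximal ideal of $R_\p$) is a prime ideal of $\OO$, with $R_\p=\OO_\p$; this prime is non-maximal in $\OO$, since $\p=\m$ would give $\M\subseteq\m=\p\subsetneq\M$. Thus $\p\mapsto\p$ is an injective, order-preserving map from $\Spec(R)\setminus\{\M\}$ into $\Spec(\OO)\setminus\{\m\}$, and its image is downward closed: if $\q$ is a prime of $\OO$ with $\q\subseteq\p$ for some non-maximal prime $\p$ of $R$, then $\q\subseteq\p\subseteq R$, so $\q=\q\cap R$ is a prime of $R$ contained in $\M$, hence in the image.

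I do not expect a genuine obstacle here: the argument is essentially bookkeeping with centres of valuation rings, and the only points needing care are the classical facts (a) and (b). It is worth noting that, apart from $R$ being dp-minimal, the argument above uses neither definability of $\OO$ nor dp-minimality of the pair $(R,\OO)$; an alternative for (2) would instead obtain comparability of $\OO$ and $R_\p$ from Lemma~\ref{L:subring_of_inp-min_fields} (both rings are integrally closed) and then exclude the possibility $R_\p\subsetneq\OO$ using $\p\subsetneq\M\subseteq\m$, but this route does require the extra hypothesis.
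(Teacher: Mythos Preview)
Your proof is correct. Part~(1) is essentially the paper's argument with a cosmetic difference at the end: the paper concludes by noting that every overring of the valuation ring $R_\p$ is a localization of $R_\p$ (hence of $R$), while you invoke maximality of valuation rings for the domination order to pin down $\OO=R_\p$ directly.

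Part~(2), however, takes a genuinely different and more elementary route. The paper first passes to the pair $(R_\p,\OO)$, uses Proposition~\ref{P:localization-preserves-dp} to see it is dp-minimal, and then applies Lemma~\ref{L:subring_of_inp-min_fields} to obtain comparability of the two integrally closed rings; dominance then excludes $R_\p\subseteq\OO$. Your argument bypasses all of this: since $R$ is divided, the maximal ideal of $R_\p$ is $\p$, and dominance gives $\p\subseteq\M\subseteq\m$, so the standard comparability criterion for valuation rings of $K$ (your fact~(b)) immediately yields $\OO\subseteq R_\p$. This has the pleasant consequence, which you noted, that the hypothesis ``$(R,\OO)$ is dp-minimal'' in~(2) is not actually needed; only dp-minimality of $R$ itself (through Theorem~\ref{T:dp-is-divided} and Proposition~\ref{P:local-is-val}) is used. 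The paper's route is not wrong, but yours is cleaner here and proves a slightly stronger statement. Your treatment of the ``initial segment'' claim is the same as the paper's.
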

\begin{proof}
\textit{(1)} Since $R\subseteq \OO$ is non-dominant, $\p=\m\cap R$ is a non-maximal prime ideal of $R$, where $\m$ is the maximal ideal of $\OO$. By Proposition \ref{P:local-is-val}, $R_\p$ is valuation overring of $R$. As $R_\p\subseteq \OO$ (if $a\in R$, $s\in R\setminus \p$, then $s\notin \m$ hence $a/s\in \OO$), we have that $\OO$ is also a valuation ring. Since $\OO$ is a localization of $R_\p$, $\OO$ is a localization of $R$.

\textit{(2)} Let $\p$ be a non-maximal prime ideal of $R$. By Proposition \ref{P:local-is-val}, $R_\p$ is a (non-dominant) valuation overring of $R$. Since $(R,\OO)$ is dp-minimal, by Proposition \ref{P:localization-preserves-dp}, $(R_p,\OO)$ is dp-minimal as well so by Lemma \ref{L:subring_of_inp-min_fields}, $\OO\subseteq R_\p$ or $R_\p\subseteq \OO$. As $R\subseteq \OO$ is dominant, necessarily $\OO\subseteq R_\p$. We need now to show that $\p$ is a prime ideal of $\OO$. To show that it is an ideal, we note that $\p\subseteq \p\OO\subseteq \p R_\p=\p$, where the last equality is by Theorem
~\ref{T:dp-is-divided}. The ideal $\p$ is a prime ideal of $\OO$ since it is a prime ideal of $R_\p$. 

To show that it is an initial segment, we show that if $\q$ is a prime ideal of $\OO$ strictly contained in $\M$ then $\q$ is a prime ideal of $R$. Indeed, $\q \subseteq R$ and $\q R \subseteq \q \OO\subseteq \q$ so $\q$ is an ideal of $R$. Further it is prime in $\OO$ so in particular in $R$.
\end{proof}

\begin{corollary}\label{C:comparaison_valuation_ring_dp-min_dom}
Let $R$ be a domain and $\OO$ a valuation ring of $\Frac(R)$ such that $(R,\OO)$ is dp-minimal. Then $R\subseteq \OO$ or $\OO\subseteq R$.
\end{corollary}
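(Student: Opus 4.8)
The plan is to argue by contradiction: suppose $R\not\subseteq\OO$ and $\OO\not\subseteq R$, writing $K:=\Frac(R)$. First dispose of the trivial case: if $R$ is a field then $\OO\subseteq K=R$, a contradiction, so $R$ is not a field; since $R$ is definable in the dp-minimal structure $(R,\OO)$ it is inp-minimal, hence a local ring with maximal ideal $\M$ by Corollary~\ref{C:inp-minimal-implies local}. Now view $R$ and $\OO$ as definable subgroups of the group $(K,+)$, which is definable in $(R,\OO)$ and hence inp-minimal; by Fact~\ref{F:almost-contain,inp} either $[R:R\cap\OO]<\infty$ or $[\OO:R\cap\OO]<\infty$. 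In the first case Lemma~\ref{L:RoverOfinite} yields $R\subseteq\OO$, contradicting our assumption; so $[\OO:R\cap\OO]<\infty$, and Lemma~\ref{L:OoverRfinite} applies: $\OO':=R[\OO]$ is a valuation ring, it equals the integral closure of $R$ in $K$, and it is module-finite over $R$.

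Next I would extract the relevant finiteness. From $\OO\subseteq\OO'$ and $\OO\not\subseteq R$ we get $R\subsetneq\OO'$, so $R$ is not integrally closed, in particular not a valuation ring; by Lemma~\ref{L:eran}(1) the residue field $k:=R/\M$ is therefore finite. Nakayama's lemma gives $\M\OO'\subsetneq\OO'$, so $\M\OO'$ lies in the unique maximal ideal $\m'$ of the valuation ring $\OO'$; hence $\M\subseteq\m'$, $\m'\cap R=\M$, and the residue field $\bar K:=\OO'/\m'$ is a field extension of $k$. Since $\OO'$ is module-finite over $R$, $\OO'/\M\OO'$ is a finite-dimensional $k$-vector space surjecting onto $\bar K$, so $[\bar K:k]<\infty$; as $k$ is finite, $\bar K$ is finite.

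Finally, $\OO\ne\OO'$ (otherwise $R\subseteq\OO'=\OO$), so $\OO\subsetneq\OO'$ as valuation rings of $K$; then $\m'\subseteq\m_\OO\subseteq\OO$ and $\OO/\m'$ is a valuation ring of $\bar K=\OO'/\m'$ properly contained in it. But a finite field admits only the trivial valuation — every nonzero element is a root of unity, hence of value $0$ — so this is the desired contradiction, and we conclude $R\subseteq\OO$ or $\OO\subseteq R$. The main obstacle is the second case: one has to realize that when Lemma~\ref{L:RoverOfinite} does not directly apply, the integral closure $R[\OO]$ is forced to be module-finite and to dominate $R$, and then to squeeze out exactly the right statement — that the residue field of $R[\OO]$ is a finite extension of the (necessarily finite) residue field of $R$, hence too small to carry the nontrivial valuation that $\OO$ induces on it. The remaining ingredients (Nakayama, the correspondence between refinements of a valuation ring and valuation rings of its residue field, triviality of valuations of finite fields) are routine.
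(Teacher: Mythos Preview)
Your proof is correct and takes a genuinely different route from the paper in the second case.

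Both arguments begin identically: apply the finite-index dichotomy for definable subgroups, dispose of the case $[R:R\cap\OO]<\infty$ via Lemma~\ref{L:RoverOfinite}, and in the remaining case invoke Lemma~\ref{L:OoverRfinite} to obtain that $\OO':=R[\OO]$ is a module-finite valuation overring equal to the integral closure of $R$. From here the paths diverge. The paper splits on whether the extension $\OO\cap R\subseteq R$ is dominant: in the non-dominant case it appeals to Proposition~\ref{P:val_overring_localisation} (hence indirectly to Proposition~\ref{P:local-is-val} and the henselianity result Corollary~\ref{C:dp-min,pNIP-henselian}) to see that $R$ itself is a valuation ring, then finishes with Lemma~\ref{L:subring_of_inp-min_fields}; in the dominant case it argues by direct element-chasing that every $a\in R$ lies in $\OO$. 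Your argument instead uses Lemma~\ref{L:eran} once to force $R/\M$ finite, then Nakayama plus module-finiteness to force the residue field of $\OO'$ finite, and finally the standard correspondence between refinements $\OO\subsetneq\OO'$ and nontrivial valuation rings of $\OO'/\m'$ to reach a contradiction with finiteness. Your route is lighter on the paper's structural machinery (it bypasses Propositions~\ref{P:local-is-val} and~\ref{P:val_overring_localisation} entirely) and replaces it with classical valuation theory; the paper's route, by contrast, exhibits $R$ as a valuation ring in the non-dominant subcase, which is extra information your contradiction argument does not yield.
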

\begin{proof}
By dp-minimality, $\OO/(\OO \cap R)$ is finite or $R/(\OO\cap R)$ is finite. If $R/(\OO\cap R)$ is finite, then by Lemma~\ref{L:RoverOfinite}, $R\subseteq \OO$. Thus we may assume that $\OO/(\OO\cap R)$ is finite. If $R/(\OO\cap R)$ is a non-dominant extension then, by Proposition~\ref{P:val_overring_localisation} $R$ is a valuation overring of $\OO\cap R$ and hence by Lemma
~\ref{L:subring_of_inp-min_fields} $R$ and $\OO$ are comparable. 

So we may assume that $R$ is a dominant overring of $\OO\cap R$. We prove that $R\subseteq \OO$. Let $a\in R$. If $a$ is non-invertible in $R$ then $a$ is non-invertible in every dominant valuation overring of $R$. By Lemma~\ref{L:OoverRfinite} $R[\OO]/R$ is an integral extension of local rings and hence a dominant extension, so $a$ is non-invertible in $R[\OO]$. It follows that $a^{-1}\notin \OO$ hence $a\in \OO$. 

Now, if $a$ is invertible in $R$ and $a\notin \OO$, then $a^{-1}\in \OO\cap R$ and $a\notin \OO\cap R$, hence $a^{-1}$ is in the maximal ideal of $\OO\cap R$. Since $R$ is a dominant extension of $\OO\cap R$, $a^{-1}$ is also in the maximal ideal of $R$, contradiction.
\end{proof}

\begin{remark}
In characteristic $p>0$, Corollary~\ref{C:comparaison_valuation_ring_dp-min_dom} has a more direct proof. Let $\OO$ be an externally definable valuation subring of $K = \Frac(R)$.  Lemma~\ref{L:RoverOfinite} takes care of the case  where $R/(\OO\cap R)$ is finite so we may assume that $\OO/(\OO\cap R)$ is finite and set $R^\prime=\OO\cap R$. Since $\OO$ is dp-minimal, by Theorem \ref{T:equi}, $\mathbb{F}_p^{alg}\subseteq \OO$. As a result, $(R^\prime+\mathbb{F}_p^{alg})/R^\prime\cong R^\prime/(\mathbb{F}_p^{alg}\cap R^\prime)$ is finite as well. It is easy to see that every subring of $\mathbb{F}_p^{alg}$ is a field, hence $\mathbb{F}_p^{alg}\cap R^\prime$ is a subfield of $\mathbb{F}_p^{alg}$ of finite codimension and in particular an infinite field. Hence $R^\prime$ contains an infinite field and so $R'$ is a valuation ring by Theorem \ref{T:equi}, and so is $R$. By Lemma
~\ref{L:subring_of_inp-min_fields}, $R$ and $\OO$ are comparable.
\end{remark}



\section{The Prime Spectrum of Dp-minimal Domains}\label{S:prime_spectrum}
We end with some non-elementary results concerning the prime spectrum of certain dp-minimal domains.




Let $R$ be a domain with $\Spec(R)$ linearly ordered by inclusion (e.g. if $R$ is inp-minimal). For $\p\subsetneq \q  \in \Spec(R)$, we will say that $\p$ is a \emph{predecessor} of $\q$ and that $\q$ is a \emph{successor} of $\p$. For $a\in \M$ we observe the following:
 \begin{itemize}
     \item $\sqrt{\vect{a}}$ is the minimal prime ideal containing $\vect{a}$;
     \item $P_a$ is the maximal prime ideal not containing $a$;
     \item $P_a$ is the immediate predecessor of $\sqrt{\vect{a}}$.
 \end{itemize}
 
Note that if $a\in \M$ then $P_a\neq \M$, if $a\neq 0$ then $\sv a$ is not equal to the zero ideal and that $P_a$ for $a=0$ does not make any sense.

\begin{lemma}\label{L:immediate_predecessor}
Let $R$ be a domain with $\Spec(R)$ linearly ordered by inclusion. For $ \p\subsetneq \q\in Spec(R)$, the following are equivalent:
\begin{enumerate}
    \item $\p$ is the immediate predecessor of $\q$;
    \item $\p = P_a$ and $\q = \sv a$ for any $a\in \q\setminus \p$.
\end{enumerate}
\end{lemma}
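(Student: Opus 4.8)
The plan is to obtain the lemma directly from the three bulleted observations that precede it; the argument is then essentially bookkeeping inside the linear order $\Spec(R)$. The two auxiliary facts I will use are that $\p\subsetneq\q$ makes $\q\setminus\p$ nonempty (and any element of it is automatically nonzero, so that $P_a$ and $\sv a$ are defined), and that in a linearly ordered poset an element has at most one immediate predecessor.

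For $(2)\Rightarrow(1)$ I would pick any $a\in\q\setminus\p$: by hypothesis $\p=P_a$ and $\q=\sv a$, and the third observation says $P_a$ is the immediate predecessor of $\sv a$, so $\p$ is the immediate predecessor of $\q$.

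For $(1)\Rightarrow(2)$, fix an arbitrary $a\in\q\setminus\p$. First I would pin down $\sv a$: by the first observation it is the least prime over $\vect a$, hence $\sv a\subseteq\q$; and since $a\in\vect a\subseteq\sv a$ while $a\notin\p$, we have $\sv a\not\subseteq\p$, so by linearity of $\Spec(R)$ it follows that $\p\subsetneq\sv a\subseteq\q$. As $\p$ is the immediate predecessor of $\q$, nothing lies strictly between them, forcing $\sv a=\q$. Then the third observation gives that $P_a$ is the immediate predecessor of $\sv a=\q$; since $\p$ is also an immediate predecessor of $\q$ and these are unique in a chain, $P_a=\p$. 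This proves that (2) holds for the (arbitrary) $a$.

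I do not expect any real obstacle: the substantive content already lives in the preceding observations, and all that remains is the elementary order-theoretic bookkeeping above — the points deserving an explicit word being the non-emptiness of $\q\setminus\p$, the remark that a prime containing $a$ is never contained in a prime omitting $a$, and the uniqueness of immediate predecessors in a chain.
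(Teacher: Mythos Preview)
Your proof is correct and follows essentially the same approach as the paper's own proof, which simply cites the preceding bulleted observations and says the implications hold ``by definition'' and ``by the above discussion.'' You have merely spelled out the order-theoretic bookkeeping in more detail than the paper does.
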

\begin{proof}
Assume that $\p$ is an immediate predecessor of $\q$. For $a\in \q\setminus \p$, necessarily $\p=P_a$ and $\q=\sv a$ by definition. The other direction follows by the above discussion.
\end{proof}

\begin{proposition}\label{P:weaklysat}
Let $R$ be a domain with $\Spec(R)$ linearly ordered by inclusion. The following are equivalent.
\begin{enumerate}
\item $\set{P_a\mid a\in \M} \setminus \set{\M}$ is densely ordered by inclusion;
    \item there are no three consecutive elements in $\Spec(R)$;
    \item for all $a,b\in \M$ with $a,b\neq 0$, $P_a\neq \sv b$.
    
\end{enumerate}
\end{proposition}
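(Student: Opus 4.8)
The plan is to prove the cycle of implications $(1)\Rightarrow(2)\Rightarrow(3)\Rightarrow(1)$, each by contraposition, using only the linearity of $\Spec(R)$, the three bullet observations recorded immediately before Lemma~\ref{L:immediate_predecessor}, and Lemma~\ref{L:immediate_predecessor} itself. A preliminary remark: since $\Spec(R)$ is linearly ordered, $R$ is local and $\M$ is the top element of $\Spec(R)$, so every prime lies below $\M$, and since $P_a\neq\M$ for all $a\in\M\setminus\set{0}$, the set occurring in $(1)$ equals $S:=\set{P_a\mid a\in\M\setminus\set{0}}$, all of whose members are proper primes strictly below $\M$. Throughout I use that, for $a\in\M\setminus\set{0}$, one has $P_a\subsetneq\sv a$ (strictly, as $a\in\sv a$ but $a\notin P_a$) and that $P_a$ is the immediate predecessor of $\sv a$ in $\Spec(R)$.

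The implications $(1)\Rightarrow(2)$ and $(2)\Rightarrow(3)$ are light. If $(2)$ fails there are consecutive primes $\p_0\subsetneq\p_1\subsetneq\p_2$; choosing $a\in\p_2\setminus\p_1$ and $b\in\p_1\setminus\p_0$ (both in $\M\setminus\set{0}$, since $\p_2\subseteq\M$ while $0\in\p_0$), Lemma~\ref{L:immediate_predecessor} identifies $\p_0=P_b$ and $\p_1=P_a$; as no prime lies strictly between $\p_0$ and $\p_1$, the set $S$ is not densely ordered, so $(1)$ fails. If $(3)$ fails, say $P_a=\sv b$ with $a,b\in\M\setminus\set{0}$, then $P_b\subsetneq\sv b=P_a\subsetneq\sv a$ are strict inclusions in which $P_b$ is the immediate predecessor of $\sv b$ and $P_a$ is the immediate predecessor of $\sv a$; this is a triple of consecutive elements of $\Spec(R)$, so $(2)$ fails.

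The substantive step is $(3)\Rightarrow(1)$. Let $P_a\subsetneq P_b$ be two members of $S$; I must produce a third member of $S$ strictly between them. By linearity either $\sv a\subseteq P_b$ or $P_b\subsetneq\sv a$, and the latter is impossible, as it would place $P_b$ strictly between $P_a$ and its immediate successor $\sv a$. Hence $\sv a\subseteq P_b$, and $(3)$ (which in particular gives $P_b\neq\sv a$) sharpens this to $\sv a\subsetneq P_b$. Now choose $c\in P_b\setminus\sv a$; then $c\neq 0$ (since $0\in\sv a$), so $c\in\M\setminus\set{0}$ and $P_c\in S$. As $\sv a$ is a prime not containing $c$, maximality of $P_c$ among primes avoiding $c$ gives $\sv a\subseteq P_c$; and as $c\in P_b$ but $c\notin P_c$, linearity forces $P_c\subsetneq P_b$. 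Altogether $P_a\subsetneq\sv a\subseteq P_c\subsetneq P_b$, so $P_c$ is the required member of $S$, and $S$ is densely ordered.

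I expect the one point needing real care to be this last choice of $c$: the prime $\sv a$ that sits between $P_a$ and $P_b$ need not itself be of the form $P_x$, so rather than trying to exhibit $\sv a$ as some $P_x$, one slides down slightly from the upper endpoint $P_b$, and any $c\in P_b\setminus\sv a$ then yields a $P_c\in S$ wedged between $\sv a$ and $P_b$. Everything else is routine bookkeeping, mainly verifying that the chosen witnesses lie in $\M\setminus\set{0}$ so that $P_a$ and $\sv a$ are defined for them, plus the observation that the density condition only concerns comparable pairs, so the degenerate cases where $S$ is empty or a singleton (in particular when $R$ is a field) are automatic.
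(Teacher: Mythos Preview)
Your proof is correct and follows essentially the same cycle $(1)\Rightarrow(2)\Rightarrow(3)\Rightarrow(1)$ as the paper, with the same use of Lemma~\ref{L:immediate_predecessor} and the observations preceding it. The only difference is in the last implication: the paper argues $(3)\Rightarrow(1)$ by contraposition (taking $P_a\subsetneq P_b$ consecutive and invoking Lemma~\ref{L:immediate_predecessor} to write $P_b=\sv c$), whereas you argue directly by constructing an intermediate $P_c$; your version is arguably cleaner since it avoids the small implicit step of passing from ``consecutive in $S$'' to ``consecutive in $\Spec(R)$''.
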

\begin{proof}
\textit{(1)} implies \textit{(2)} is obvious.

\textit{(2)} implies \textit{(3)}. Assume that $P_a=\sv b$ for some $a,b\in \M$ with $a,b\neq 0$. By the above discussion, $P_b\subsetneq \sv b=P_a\subsetneq \sv a$ are three consecutive prime ideals, contradicting \textit{(2)}.

\textit{(3)} implies \textit{(1)}. Assume there are $P_a\subsetneq P_b\subsetneq \M$, for $a,b\in \M$, necessarily non-zero elements, with $P_a\subsetneq P_b$ consecutive prime ideals. By Lemma \ref{L:immediate_predecessor}, $P_b=\sv c$ and $P_a=P_c$ for some $c\in P_b\setminus P_a$, contradicting the assumption. 
\end{proof}

\begin{definition}
We say that a domain $R$, with $\Spec(R)$ linearly ordered by inclusion, has property $(\star)$ if it satisfies one of the equivalent conditions of Proposition~\ref{P:weaklysat}.
\end{definition}

An integral domain of finite Krull dimension has property $(\star)$ if and only if it is one-dimensional. It is not hard to see that any $\aleph_0$-saturated domain has property $(\star)$, and actually this is true in higher generality.

\begin{proposition}\label{P:saturated-implies-v-sat}
Let $D$ be a $\kappa$-saturated domain and $R$ a $\bigvee$-definable local subring whose prime ideals are linearly ordered.  Then $R$ has property $(\star)$.
\end{proposition}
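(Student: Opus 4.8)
The plan is to verify condition \textit{(3)} of Proposition~\ref{P:weaklysat}. Suppose, towards a contradiction, that $P_a=\sv b$ for some nonzero $a,b$ in the maximal ideal $\M$ of $R$. Since $P_b$ is the immediate predecessor of $\sv b$ and $P_a$ the immediate predecessor of $\sv a$ (see the observations preceding Lemma~\ref{L:immediate_predecessor}), this yields three consecutive primes
\[
P_b\ \subsetneq\ \sv b=P_a\ \subsetneq\ \sv a
\]
of $R$. Two facts will be used repeatedly: $a\notin P_a=\sv b$, so $a^m\notin\vect b$ for every $m\ge 1$ (as $\sv b$ is prime); and $a$ is a non-unit, lying in $\M$. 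We also recall that, $\Spec(R)$ being linearly ordered, every proper radical ideal of $R$ is prime.

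The heart of the proof is to produce an element $c\in R$ satisfying $c\ne 0$, $c^m\notin\vect b$ for all $m\ge 1$, and $a^m\notin\vect c$ for all $m\ge 1$. Granting this, the contradiction is quick: $c$ is a nonzero non-unit of $R$ (a unit would give $a\in R=\vect c$), so $\sv c$ is a proper prime ideal containing $c$; since $c\notin\sv b$ and $\Spec(R)$ is a chain, $\sv b\subsetneq\sv c$, whence $\sv a\subseteq\sv c$ because $\sv a$ is the immediate successor of $P_a=\sv b$. But then $a\in\sv a\subseteq\sv c$, i.e.\ $a^m\in\vect c$ for some $m$, contrary to the choice of $c$.

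To build $c$ we use saturation. Fix a directed family $\{X_i\}$ of sets definable in $D$ over a small parameter set with $R=\bigcup_i X_i$, and let $q(x)$ be the partial type (over $\{a,b\}$ and these parameters) consisting of $x\ne 0$ together with, for every index $i$ and every $m\ge 1$, the formulas $\forall z\,(z\in X_i\to x^m\ne bz)$ and $\forall z\,(z\in X_i\to a^m\ne xz)$; a realization of $q(x)$ lying in $R$ is precisely an element $c$ of the kind sought above. The type $q(x)$ is finitely satisfiable in $D$ by elements of $R$: by directedness any finite fragment is implied by the conjunction of $x\ne 0$, $x^m\notin bX_j$ for $m\le M$, and $a^m\notin xX_j$ for $m\le M$, for suitable $j,M$, and this conjunction is satisfied by $c:=a^{M+1}\in R$, since $(a^{M+1})^m=a^{m(M+1)}\notin\vect b$ (because $a\notin\sv b$ and $\sv b$ is prime), while $a^m\in\vect{a^{M+1}}$ for some $m\le M$ would yield $1=a^{M+1-m}r$ for an $r\in R$, making $a$ a unit.

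Finally one must realize $q(x)$ by an element of $R$, not merely of $D$. Since $R=\bigcup_i X_i$, this amounts to realizing $q(x)\wedge(x\in X_{i_0})$ for some single index $i_0$, so by $\kappa$-saturation of $D$ it is enough to exhibit one $i_0$ for which that type is finitely satisfiable; fixing such an $i_0$, saturation then produces $c\in X_{i_0}\subseteq R$ realizing $q$, and the bracketed argument above concludes. The existence of such an $i_0$ is exactly where $\bigvee$-definability is used — the countably many witnesses $a^{M+1}$ must be gathered into a single member of the defining family (immediate when $R$ is outright definable; in general requiring care with the interaction between $\bigvee$-definability and the degree of saturation of $D$, and this is the step I expect to be the main obstacle). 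Everything else — isolating the three consecutive primes, writing down $q(x)$, and checking that powers of $a$ witness finite satisfiability — is routine bookkeeping with radicals of principal ideals in a domain with linearly ordered spectrum.
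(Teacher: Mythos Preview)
Your setup and the finite-satisfiability check are correct: the powers $a^{M+1}$ witness every finite fragment of $q(x)$. The gap you flag, however, is genuine and not a mere technicality. To realize $q(x)$ inside $R$ you must adjoin $x\in X_{i_0}$ for a \emph{fixed} index $i_0$, and then finite satisfiability requires the countably many witnesses $a^{2},a^{3},\dots$ to lie in that single $X_{i_0}$; since the $X_i$ are arbitrary definable sets (not subrings, not closed under products), there is no reason this should hold, and directedness of the family does not help with an infinite collection. Without $c\in R$ your contradiction collapses: $\sv c$ is only a prime ideal of $R$ when $c\in R$, so the chain comparison with $\sv b$ and $\sv a$ is unavailable. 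I do not see a routine repair along these lines.

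The paper avoids this obstacle by running compactness in the opposite direction. Rather than trying to realize a type inside the $\bigvee$-definable set $R$, it observes that $P_a$ is cut out by the small \emph{conjunction} $\bigwedge_{n,i}(\forall y\in R_i)(a^n\neq xy)$, while $\sv b$ is given by the small \emph{disjunction} $\bigvee_{n,i}(\exists y\in R_i)(x^n=by)$. The equality $P_a=\sv b$ then forces, by compactness, that membership in $P_a$ is already detected by a finite sub-conjunction $\bigwedge_{n\in F}(\forall y\in R_i)(a^n\neq xy)$ (with a single $R_i$, using that the $R_i$ are closed under finite unions). At this point no search for a new element is needed: the power $a^{n_0}$ with $n_0=\max(F)+1$ is in $R$ automatically, and since $a$ is a non-unit one has $a^{n_0}y\neq a^n$ for all $y\in R$ and all $n\in F$, so $a^{n_0}$ satisfies the finite sub-conjunction and hence lies in $P_a$, contradicting the very definition of $P_a$. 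The moral is that the paper never needs saturation to \emph{produce} an element of $R$; it uses compactness only to reduce the description of $P_a$ to finitely many conditions, and then checks those conditions against a specific power of $a$ that is already known to lie in $R$.
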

\begin{remark}
By $R$ $\bigvee$-definable we mean $R$ is definable by $\bigvee_{i<\lambda}\varphi_i(x,a_i)$ for some $\lambda<\kappa$.
\end{remark}
\begin{proof}
Assume that $R$ is defined by $\bigvee_{i<\lambda} ``x\in R_i"$, where $\lambda<\kappa$. There is no harm is assuming that the $R_i$ are closed under finite unions. Let $\M$ be the maximal ideal and assume towards a contradiction that $P_a=\sv b$, as prime ideals of $R$, for some $a,b\in \M$ with $a,b\neq 0$. Note that $\sv b$ is $\bigvee$-definable:
\[\bigvee_{n\in \mathbb{N}} (x^n\in bR)=\bigvee_{n\in\mathbb{N}}((\exists y\in R) (x^n=yb))=\bigvee_{n\in \mathbb{N}}\bigvee_{i<\lambda}( (\exists y\in R_i) (x^n=yb)).\]
As for $P_a$, it is defined by the type
\[\bigwedge_{n\in\mathbb{N}}(a^n\notin xR)= \bigwedge_{n\in\mathbb{N}}(\neg (\exists y\in R)(a^n=xy))=\bigwedge_{n\in\mathbb{N}}\bigwedge_{i<\lambda}((\forall y\in R_i)(a^n\neq xy)).\]
Since $P_a = \sv b$, by compactness there exist finite subsets $F\subseteq \mathbb{N}$ and $I\subseteq \lambda$ such that 
\[x\in P_a \iff \bigwedge_{n\in F}\bigwedge_{i\in I}((\forall y\in R_i)(a^n\neq xy)).\]
Since the $R_i$ are closed by finite unions, there is some $i<\lambda$ such that 
\[x\in P_a \iff \bigwedge_{n\in F}((\forall y\in R_i)(a^n\neq xy)).\]
Let $n_0=\max\set{n:n\in F}+1$. Since $a$ is non-invertible in $R$, $a^{n_0}y\neq a^n$ for all $y\in R$ (in particular for all $y\in R_i$) and $n\in F$. As a result, $a^{n_0}\in P_a$, contradiction.
\end{proof}

\begin{remark}\label{R:v-sat_VR}
Let $\OO$ be a valuation ring with value group $\Gamma$. It is not hard to see that $\OO$ has property $(\star)$ if and only if the set of archimedean components of $\Gamma$ is densely ordered. Indeed, by using the standard correspondence between prime ideals and convex subgroups of the value group, each archimedean component corresponds to $\sv a/P_a$ for some $0\neq a\in M$. 
\end{remark}
\begin{remark}\label{R:Echi_Khalfalah}
Let $^{*}\R$  be the hyperreals (resp. $^*\C$ the hypercomplex) and $^b\R$ (resp. $^b\C$) the ring of bounded elements. $^b\R$ and $^b\C$ are $\bigvee$-definable valuation rings in an $\aleph_1$-saturated domain, hence Proposition~\ref{P:saturated-implies-v-sat} applies. In \cite{EK19}, Echi and Khalfallah prove directly that these valuation rings do not have three consecutive prime ideals. The previous proposition is a generalisation of this result.

\end{remark}


\begin{proposition}\label{P:v-sat -down in dominant}
Let $(R,\M)$ be a local ring and $R\subseteq \OO$ a dominant valuation overring with $(R,\OO)$ dp-minimal. If $\OO$ has property $(\star)$ then so does $R$.
\end{proposition}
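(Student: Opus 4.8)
The plan is to use the equivalent form of $(\star)$ from Proposition~\ref{P:weaklysat}(3) and to transfer a failure of it from $R$ up to $\OO$. Since $(R,\OO)$ is dp-minimal, so is $R$ (with its ring structure); hence $\Spec(R)$ is linearly ordered by inclusion (Corollary~\ref{C:prime-ideals-inp-minimal}) and $R$ is divided (Theorem~\ref{T:dp-is-divided}), while $\Spec(\OO)$ is linearly ordered because $\OO$ is a valuation ring. Write $\m$ for the maximal ideal of $\OO$. By Proposition~\ref{P:val_overring_localisation}(2) and its proof, the non-maximal prime ideals of $R$ are exactly the prime ideals of $\OO$ that are strictly contained in $\M$; these form an initial segment of $\Spec(\OO)$, and for each such $\p$ we have $\OO\subseteq R_\p$ and, by Theorem~\ref{T:dp-is-divided}, $\p R_\p=\p$.

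The crux, which I expect to be the main obstacle, is to show that $\M$ is comparable under inclusion to \emph{every} prime ideal of $\OO$; granting this, $\Spec(\OO)$ is the disjoint union of the non-maximal primes of $R$ and the primes of $\OO$ containing $\M$. To prove comparability, fix $\mathfrak{s}\in\Spec(\OO)$. Since $R$ is divided, $\M$ is comparable to the ideal $\mathfrak{s}\cap R$ of $R$ (Remark~\ref{R:divided}), so either $\M\subseteq\mathfrak{s}\cap R$ — whence $\M\subseteq\mathfrak{s}$ — or $\mathfrak{s}\cap R\subsetneq\M$. In the second case I would argue by contradiction: if $\mathfrak{s}\not\subseteq\M$, pick $y\in\mathfrak{s}\setminus\M$; then $\p:=\mathfrak{s}\cap R$ is a non-maximal prime of $R$, so $y\in\OO\subseteq R_\p$ and we may write $y=r/s$ with $r\in R$, $s\in R\setminus\p$; but then $r=sy\in\mathfrak{s}\cap R=\p$, so $y\in\p R_\p=\p\subseteq\M$, contradicting $y\notin\M$. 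Hence $\mathfrak{s}\subseteq\M$, which completes the comparability.

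With this in hand, suppose for contradiction that $R$ does not have $(\star)$. By Proposition~\ref{P:weaklysat}(3) there are nonzero $a,b\in\M$ with $P_a=\sqrt{\langle b\rangle}$ computed in $R$; call this common prime $\q$. Since $a\in\M$ we have $\q=P_a\subsetneq\M$, so $\q$ is a non-maximal prime of $R$, hence a prime ideal of $\OO$. I would then check that $\q$ also equals $P_a$ and $\sqrt{\langle b\rangle}$ computed in $\OO$: the maximal prime ideal of $\OO$ omitting $a$ cannot contain $\M$ (since $a\in\M$), so by comparability it is strictly contained in $\M$, hence is a non-maximal prime of $R$ omitting $a$, hence contained in $P_a=\q$; as $\q$ is itself a prime ideal of $\OO$ with $a\notin\q$, it must be that maximal prime. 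Likewise, the minimal prime ideal of $\OO$ containing $b$ is contained in $\q\subsetneq\M$, so it is a non-maximal prime of $R$ containing $b$, and by minimality of $\sqrt{\langle b\rangle}=\q$ it equals $\q$. Thus $P_a=\sqrt{\langle b\rangle}$ holds in the valuation ring $\OO$ for the nonzero elements $a,b\in\M\subseteq\m$, so Proposition~\ref{P:weaklysat}(3) (applied to $\OO$) shows that $\OO$ fails $(\star)$ — contradicting the hypothesis. The only remaining point is the routine existence of these maximal and minimal primes in $\OO$, which holds since in a valuation ring the prime ideals are linearly ordered and a union (resp.\ intersection) of a chain of primes is prime.
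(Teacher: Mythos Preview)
Your proof is correct and follows the paper's strategy, which is a one-line appeal to Proposition~\ref{P:val_overring_localisation}(2). You work with condition~(3) of Proposition~\ref{P:weaklysat} rather than condition~(2) and, more substantively, you make explicit the comparability of $\M$ with every prime ideal of $\OO$ (via $\OO\subseteq R_\p$ and $\p R_\p=\p$); this comparability is exactly what is needed to handle the case where the three consecutive primes in $\Spec(R)$ terminate at $\M$, a point the paper's terse proof leaves to the reader.
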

\begin{proof}
By Proposition~\ref{P:val_overring_localisation} (2), if $\OO$ does not have three consecutive prime ideals then the same holds for $\Spec(R)$.
\end{proof}

\begin{corollary}
Let $(K,v)$ be a dp-minimal field with a valuation ring $\OO$ having  property $(\star)$ (e.g. if $vK$ is $\aleph_0$-saturated). Let $(R,\M)$ be an externally definable ring in $(K,v)$.
\begin{enumerate}
    \item If $v(\M)>0$ then $R$  has property $(\star)$.
    \item If the residue field of $(K,v)$ is finite then $R$  has property $(\star)$.
\end{enumerate}
\end{corollary}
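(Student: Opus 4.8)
The final statement is a corollary about externally definable rings $(R,\M)$ inside a dp-minimal valued field $(K,v)$ whose valuation ring $\OO$ has property $(\star)$. I want to conclude that $R$ itself has property $(\star)$ in both cases: (1) when $v(\M)>0$, and (2) when the residue field is finite. The plan is to reduce both cases to Proposition~\ref{P:v-sat -down in dominant}, which says that if $R\subseteq \OO$ is a dominant valuation overring with $(R,\OO)$ dp-minimal and $\OO$ has property $(\star)$, then so does $R$. The key technical tools available are Corollary~\ref{C:comparaison_valuation_ring_dp-min_dom} (comparability of $R$ and $\OO$ when $(R,\OO)$ is dp-minimal), Proposition~\ref{P:val_overring_localisation}, and the classification results of Section~\ref{S:val-rings}.

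**First reductions.** Since $R$ is externally definable in the dp-minimal field $(K,v)$, after naming the external parameters the structure $(K,R,v)$ — hence $(R,\OO)$ — is dp-minimal, so Corollary~\ref{C:comparaison_valuation_ring_dp-min_dom} applies: either $R\subseteq \OO$ or $\OO\subseteq R$. If $\OO\subseteq R$ then $R$ is an overring of the valuation ring $\OO$, hence $R$ is itself a valuation ring of $K$; its value group is a quotient of $vK$ by a convex subgroup, and the archimedean components of a quotient of $vK$ still form a densely ordered set when those of $vK$ do (equivalently, by Remark~\ref{R:v-sat_VR}, property $(\star)$ passes to overrings of a valuation ring with property $(\star)$), so $R$ has property $(\star)$. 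Thus in both cases it suffices to treat the situation $R\subseteq \OO$. If this inclusion is non-dominant, then by Proposition~\ref{P:val_overring_localisation} applied with the roles arranged appropriately, $R$ is again a valuation ring (a localization sandwiched between $\OO\cap R$ and $\OO$), and property $(\star)$ follows as before; so the remaining case is $R\subseteq \OO$ \emph{dominant}.

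**The two hypotheses.** Now I use the specific hypotheses to rule out the bad configurations. In case (1), $v(\M)>0$ means $\M\subseteq \m$ (the maximal ideal of $\OO$), which is exactly the statement that the extension $R\subseteq \OO$ is dominant — and moreover it forces $R$ to be genuinely inside $\OO$ with the induced valuation, so Proposition~\ref{P:v-sat -down in dominant} applies directly and gives property $(\star)$ for $R$. In case (2), the residue field of $(K,v)$ is finite; here I combine Lemma~\ref{L:RoverOfinite}/Lemma~\ref{L:OoverRfinite} with the dp-minimality: the residue field of $\OO$ being finite, by Theorem~\ref{T:equi} (or directly, since everything lies over a finite field) the only way $R\subseteq \OO$ can be dominant and proper is controlled, and in fact $\OO/(R\cap\OO)$ or $R/(R\cap\OO)$ is finite so $R$ is again a valuation overring, reducing once more to Proposition~\ref{P:v-sat -down in dominant}. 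In the dominant valuation-ring case, that proposition closes the argument.

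**Main obstacle.** The delicate point is verifying that, in the dominant case with $R\subseteq\OO$, the hypothesis $(R,\OO)$ dp-minimal required by Proposition~\ref{P:v-sat -down in dominant} is actually available — this needs that $R$ externally definable in $(K,v)$ really does yield a dp-minimal pair, which follows from \cite[Corollary 3.24]{Sim2015} together with the fact that $\OO$ is $\emptyset$-definable (or externally definable) in $(K,v)$ — and that when $R$ turns out to be a valuation overring (the $\OO\subseteq R$ or non-dominant subcases) property $(\star)$ genuinely transfers; the latter is the content of Remark~\ref{R:v-sat_VR} applied to a quotient value group, which is routine but should be stated explicitly. Modulo these bookkeeping points, the corollary is an immediate consequence of Proposition~\ref{P:v-sat -down in dominant} and Corollary~\ref{C:comparaison_valuation_ring_dp-min_dom}.
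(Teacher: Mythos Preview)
Your treatment of case (1) is essentially the paper's proof: comparability gives $R\subseteq\OO$ or $\OO\subseteq R$; in the latter case $\Spec(R)$ is an initial segment of $\Spec(\OO)$ (your value-group phrasing via Remark~\ref{R:v-sat_VR} is equivalent) so property $(\star)$ transfers; in the former case the hypothesis $v(\M)>0$ is exactly dominance, and Proposition~\ref{P:v-sat -down in dominant} finishes.

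For case (2), however, there is a genuine gap. In your ``First reductions'' you assert that if $R\subseteq\OO$ is non-dominant then ``$R$ is again a valuation ring (a localization sandwiched between $\OO\cap R$ and $\OO$)''. This is not what Proposition~\ref{P:val_overring_localisation}(1) gives: it says $\OO=R_\p$ for some non-maximal prime $\p$ of $R$, so $\OO$ is a localization of $R$, not the reverse, and there is no reason for $R$ itself to be a valuation ring (indeed Example~\ref{E:mixed} is a non-valuation $R$ sitting inside a valuation ring $\OO$ with finite residue field). Your subsequent paragraph for (2) does not repair this: once $R\subseteq\OO$, one has $\OO\cap R=R$, so the appeals to Lemmas~\ref{L:RoverOfinite} and \ref{L:OoverRfinite} are vacuous, and Theorem~\ref{T:equi} does not force $R$ to be a valuation ring here.

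The missing idea is the paper's: assume $R\subseteq\OO$ is non-dominant. Then $\OO=R_\p$ for some non-maximal prime $\p$ of $R$ by Proposition~\ref{P:val_overring_localisation}(1). Since $R$ is divided (Theorem~\ref{T:dp-is-divided}), $\p R_\p=\p$, so $R/\p$ embeds in $R_\p/\p R_\p$, the residue field of $\OO$, which is finite by hypothesis. But a non-maximal prime has infinite index (Remark~\ref{R:prime-inft-index}), contradiction. Hence $R\subseteq\OO$ is dominant after all, and Proposition~\ref{P:v-sat -down in dominant} applies.
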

\begin{proof}
\textit{(1)} By Corollary~\ref{C:comparaison_valuation_ring_dp-min_dom}, $R\subseteq \OO$ or $\OO\subseteq R$. If the former holds, then apply Proposition \ref{P:v-sat -down in dominant}. If the latter holds, then $R$  has property $(\star)$, since $\Spec(R)$ is an initial segment of $\Spec(\OO)$.

\textit{(2)} As in \textit{(1)}, we may assume that $R\subseteq \OO$. By Proposition~\ref{P:v-sat -down in dominant}, it is enough to show that $\OO$ is a dominant extension of $R$. Otherwise, by Proposition \ref{P:val_overring_localisation}, $\OO = R_\p$ for some non-maximal prime ideal $\p$ of $R$. As $R$ is divided by Theorem \ref{T:dp-is-divided},
\[|R/\p|\leq |R_\p/\p|=|R_\p/\p R_\p|<\infty,\]
contradicting Remark \ref{R:prime-inft-index}.
\end{proof}



In a domain with linearly ordered prime spectrum, $\aleph_0$-saturation also gives that $P_a\neq \set{0}$ for all $a$. Divided domains satisfying this property are called \emph{pointwise non-archimedean} \cite{Dob86} \cite{AKP98}.  A somewhat mirror notion is requiring that $\sv{a}\neq \M$ for all $a$, or equivalently, the maximal ideal does not have an immediate prime predecessor.


\begin{proposition}\label{P:nipp}
Let $R$ be a dp-minimal domain with maximal ideal $\M$. If $\M$ has no immediate prime predecessor then 
\begin{enumerate}
    \item $R$ is a pseudo-valuation domain;
    \item $\M/\vect{a}$ is infinite for all $a\in \M$;
    \item $\M^{00}=\M$;
    \item $\M^{-1}=\bigcap_{a\in \M} R_{P_a}$ is a valuation overring with maximal ideal $\M$;
    \item for every $a,b\in \M$ either $a\in \vect{b}$ or $b^2\in\vect{a}$.
\end{enumerate}
\end{proposition}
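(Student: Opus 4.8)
\emph{Overview and item (2).} I would dispatch the five items in the order (2), then (1) and (4) together, then (3), then (5). The hypothesis enters only through its reformulation via Lemma~\ref{L:immediate_predecessor} (case $\q=\M$): \emph{``$\M$ has no immediate prime predecessor''} is equivalent to $\sv{a}\subsetneq\M$ for every nonzero $a\in\M$. We assume $R$ is not a field (so $\M\neq(0)$; for item~(2) to hold one needs $\M\neq(0)$), so $R$ is infinite (a finite domain is a field) and hence so is $\M$ (the map $r\mapsto\pi r$ for a nonzero $\pi\in\M$ is an injection $R\to\M$); recall $R$ is local (Corollary~\ref{C:inp-minimal-implies local}), $\Spec R$ is linearly ordered (Corollary~\ref{C:prime-ideals-inp-minimal}), and $R$ is divided (Theorem~\ref{T:dp-is-divided}). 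Item~(2) is then immediate: for $0\neq a\in\M$, $\sv{a}$ is a proper radical ideal of the local ring $R$ with $\sv{a}\subsetneq\M$, so $[\M:\sv{a}]=\infty$ by Remark~\ref{R:prime-inft-index}, whence $[\M:\vect{a}]=\infty$; and for $a=0$ the assertion is just that $\M$ is infinite.

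\emph{Items (1) and (4).} Put $V:=\bigcap_{0\neq a\in\M}R_{P_a}$ (the intersection of~(4), with $a$ ranging over nonzero elements). Since $\Spec R$ is a chain, the $R_{P_a}$ form a chain of overrings, each a valuation ring by Proposition~\ref{P:local-is-val} (as $P_a\subsetneq\M$); an intersection of a chain of valuation subrings of $K=\Frac(R)$ is again one, so $V$ is a valuation overring of $R$. The crux is $\m_V=\M$. For $\m_V\subseteq\M$: if $x\in\m_V$ then $x^{-1}\notin R_{P_a}$ for some $a$ while $x\in V\subseteq R_{P_a}$, so $x$ lies in the maximal ideal $P_aR_{P_a}$ of $R_{P_a}$; and $P_aR_{P_a}=P_a$ by dividedness (Theorem~\ref{T:dp-is-divided}), so $x\in P_a\subseteq\M$. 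For $\M\subseteq\m_V$ --- the step that uses the hypothesis --- given $0\neq a\in\M$ choose $0\neq b\in\M\setminus\sv{a}$ (possible since $\sv{a}\subsetneq\M$); then $\sv{a}$ is a prime not containing $b$, hence $\sv{a}\subseteq P_b$, so $a\in\sv{a}\subseteq P_b$ and therefore $a^{-1}\notin R_{P_b}$, i.e.\ $a\in\m_V$. Granting $\m_V=\M$ (so $\M$ is an ideal of $V$): for~(1), a short computation in the valuation ring $V$ shows $\M=\m_V$ is strongly prime --- if $xy\in\M$ with, say, $x/y\in V$, then $x^2=(x/y)(xy)\in V\M=\M$, and $x\in V$ (else $x^{-1}\in\m_V$ and then $1=x^2x^{-2}\in\m_V$), so $x\in\m_V=\M$ since $\m_V$ is prime --- hence $R$ is a pseudo-valuation domain by \cite[Theorem~1.4]{HH78}; for~(4), $\M^{-1}=V$, since $x\in V$ gives $x\M\subseteq\m_V=\M\subseteq R$, and conversely $x\M\subseteq R$ gives $x=(xa)/a\in R_{P_a}$ for every $0\neq a\in\M$ (using $a\notin P_a$).

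\emph{Item (3).} By~(2) and the proposition above giving $\sqrt{\M^{00}}=\{a\in\M:\M/\vect{a}\text{ infinite}\}$, we have $\sqrt{\M^{00}}=\M$. Now $\M^{00}$ is an ideal of $R$ (as $\M$ is a definable ideal of the NIP ring $R$), and for each $0\neq a\in\M$ the prime ideal $\sv{a}$ is divided since $R$ is divided (Theorem~\ref{T:dp-is-divided}), hence comparable with $\M^{00}$ by Remark~\ref{R:divided}. But $\M^{00}\subseteq\sv{a}$ would force $\M=\sqrt{\M^{00}}\subseteq\sv{a}$, contradicting the hypothesis; hence $\sv{a}\subseteq\M^{00}$, and in particular $a\in\M^{00}$. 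As $a$ ranges over $\M\setminus\{0\}$ this gives $\M\subseteq\M^{00}$, i.e.\ $\M^{00}=\M$.

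\emph{Item (5), and the main obstacle.} Let $v$ be the valuation with ring $V=\M^{-1}$ and value group $\Gamma$, so $\m_V=\M\subseteq R$. For nonzero $a,b\in\M$ we have $v(a),v(b)>0$; if $v(a)>v(b)$ then $v(a/b)>0$, so $a/b\in\m_V\subseteq R$ and $a\in\vect{b}$; if $v(a)<v(b)$ then symmetrically $b\in\vect{a}$, hence $b^2\in\vect{a}$; and if $v(a)=v(b)$ then $v(b^2/a)=v(b)>0$, so $b^2/a\in\m_V\subseteq R$ and $b^2\in\vect{a}$ (the cases $a=0$ or $b=0$ are trivial). The one genuinely substantive step is the construction in~(1)/(4): recognizing that $V=\bigcap_{0\neq a\in\M}R_{P_a}=\M^{-1}$ is the relevant valuation overring and proving $\m_V=\M$ --- with $\m_V\subseteq\M$ resting on $P_aR_{P_a}=P_a$ and $\M\subseteq\m_V$ being exactly where ``no immediate prime predecessor'' is consumed. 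Items~(1), (3) and~(5) are then essentially formal consequences.
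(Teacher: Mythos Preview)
Your proof is correct. The overall architecture matches the paper's---the key observation is the same (the hypothesis is equivalent to $\sv a\subsetneq\M$ for all nonzero $a\in\M$), and the construction of $V=\bigcap_{a}R_{P_a}$ with maximal ideal $\M$ is exactly the paper's $\check R$---but there are two genuine differences in execution. First, for item~(1) the paper takes a shorter internal route: since $xy\in\M$ implies $\sv{xy}\subsetneq\M$, and every non-maximal prime is already strongly prime by Proposition~\ref{P:strongly_prime}, one gets $x\in\sv{xy}$ or $y\in\sv{xy}$ immediately, without first building $V$. Your route (deduce strong primality of $\M$ from the existence of a valuation overring with maximal ideal $\M$) is precisely the Hedstrom--Houston characterisation the paper cites in item~(4), so you are effectively unpacking that reference. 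Second, for items~(4) and~(5) the paper outsources the identifications $\M^{-1}=\check R$ and the $a\in\vect b$-or-$b^2\in\vect a$ dichotomy to \cite{HH78} and \cite{Bad95}, whereas you give clean direct arguments; this makes your write-up more self-contained at the cost of a few extra lines. Your argument for~(3) (compare $\M^{00}$ with $\sv a$ and use $\sqrt{\M^{00}}=\M$) is essentially the paper's, arguably slightly cleaner since you avoid the intermediate claim $\M^{00}\subseteq\vect a$.
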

\begin{proof}
\textit{(1)} It is sufficient to prove that $\M$ is strongly prime. Assume that $x,y\in \Frac(R)$ with $xy\in \M$. As $\M$ does not have an immediate predecessor, $\sv{xy}\subsetneq \M$ (for otherwise, $P_{xy}$ is an immediate predecessor of $\M$). Since $\sv{xy}$ is a prime ideal, Proposition \ref{P:strongly_prime} gives $x\in \sv{xy}$ or $y\in\sv{xy}$, as needed. 

\textit{(2)} If $\M/\vect{a}$ is finite for some $a\in \M$ then for every $b\in \M$ there exists some $n$ and $m$ such that $b^n-b^m\in\vect{a}$ hence $b^k\in \vect{a}$ for some $k$, i.e. $\M=\sv a$. Contradicting the fact that $\M$ does not have an immediate predecessor. 

\textit{(3)} Assume there exists $a\in \M\setminus \M^{00}$. Since $\M$ does not have an immediate predecessor $\sv a\subsetneq \M$ and as $\sv a$ is a prime ideal and $\M^{00}$ is an ideal, by Remark \ref{R:divided} and Theorem \ref{T:dp-is-divided}, $\sv a$ and $\M^{00}$ are comparable. Since $a\in \sv a$ but $a\notin \M^{00}$, necessarily $\M^{00}\subseteq \vect{a}\subseteq \sv a$, but it follows from $(2)$ that $\M/\vect{a}$ is unbounded, and hence so is $\M/\M^{00}$, contradiction.

\textit{(4)} Consider $\check R=\bigcap_{a\in \MM} R_{P_a}$. Since the prime ideals are linearly ordered, by Proposition \ref{P:local-is-val}, $\check R$ is a valuation overring of $R$. Its maximal ideal is $\bigcup_{a\in \MM}P_aR_{P_a}=\bigcup_{a\in \MM}P_a$, where the last equality holds since $R$ is divided by Theorem \ref{T:dp-is-divided}. Since $\M$ has no prime predecessor, by Lemma \ref{L:prime} we may conclude that $\bigcup_{a\in \MM}P_a=\M$. The result now follows by \cite[Theorem 2.7 and Theorem 2.10]{HH78}.

\textit{(5)} follows from \textit{(4)} and \cite[page 4368]{Bad95}.
\end{proof}

\begin{remark}
If $R$ has an immediate prime predecessor, $\M=\sv a$ for some $a\in \M$. By Theorem \ref{T:dp-is-divided}, there exists an $n\in \mathbb{N}$ such that $b^n \in \vect{a}$ for all $b\in \M$. This fits well with the examples at the end of Section \ref{S:val-rings}. For each of these kind of examples, $R$ is is a pseudo-valuation ring or there exists some ideal $\vect{a}$ and $n\in\mathbb{N}$ such that $b^n\in\vect{a}$ for all $b\in\M$.
\end{remark}
\begin{remark}
A domain $R$ is said to be \emph{fragmented}  if for each non-invertible $a\in R$ there exists a non-invertible $b\in R$ such that $a\in \bigcap_n \vect{b^n}$~\cite{Dob85, CD01}. In a divided domain, $P_a = \bigcap_n \vect{a^n}$, and it is easy to see that a divided domain is fragmented if and only if the maximal ideal has no immediate prime predecessor. Consequently, every fragmented dp-minimal domain is a pseudo-valuation ring. 

If $\OO$ is a dp-minimal domain of positive characteristic having property $(\star)$ then for any $a\in \OO$, $\F_p+P_a$ is a fragmented pseudo-valuation domain which is not a valuation ring.
\end{remark}

\bibliographystyle{alpha}
\bibliography{dpmin-rings}
\end{document}